\date{}
\title{\vspace{-0.9cm}Almost-Fisher families}
\author{
Shagnik Das \thanks{Department of Mathematics, ETH, 8092 Zurich, Switzerland. Email: shagnik@ucla.edu.}
\and
Benny Sudakov \thanks{Department of Mathematics, ETH, 8092 Zurich, Switzerland. Email: benjamin.sudakov@math.ethz.ch. Research supported in part by SNSF grant 200021-149111 and by a USA-Israel BSF grant.}
\and
Pedro Vieira \thanks{Department of Mathematics, ETH, 8092 Zurich, Switzerland.  Email: pedro.vieira@math.ethz.ch.}
}
\theoremstyle{plain}
\newtheorem{thm}{Theorem}[section]
\newtheorem{prop}[thm]{Proposition}
\newtheorem{lemma}[thm]{Lemma}
\newtheorem{cor}[thm]{Corollary}
\theoremstyle{definition}
\newcommand{\floor}[1]{\left\lfloor #1 \right\rfloor}
\newcommand{\ceil}[1]{\left\lceil #1 \right\rceil}
\newcommand{\card}[2][]{\left| #2 \right|_{#1}}
\newcommand{\comp}[1]{\overline{#1}}
\newcommand{\cF}{\mathcal{F}}
\newcommand{\cG}{\mathcal{G}}
\newcommand{\cH}{\mathcal{H}}
\newcommand{\cP}{\mathcal{P}}
\newcommand{\cQ}{\mathcal{Q}}
\newcommand{\cR}{\mathcal{R}}
\newcommand{\cS}{\mathcal{S}}
\newcommand{\rank}{\mathrm{rank}}
\newcommand{\RR}{\mathbb{R}}
\begin{document}
\maketitle

\begin{abstract}
A classic theorem in combinatorial design theory is Fisher's inequality, which states that a family $\cF$ of subsets of $[n]$ with all pairwise intersections of size $\lambda$ can have at most $n$ non-empty sets.  One may weaken the condition by requiring that for every set in $\cF$, all but at most $k$ of its pairwise intersections have size $\lambda$. We call such families $k$-almost $\lambda$-Fisher. Vu was the first to study the maximum size of such families, proving that for $k=1$ the largest family has $2n-2$ sets, and characterising when equality is attained.  We substantially refine his result, showing how the size of the maximum family depends on $\lambda$. In particular we prove that for small $\lambda$ one essentially recovers Fisher's bound.  We also solve the next open case of $k=2$ and obtain the first non-trivial upper bound for general $k$.
\end{abstract}

\section{Introduction} \label{sec:intro}

A \emph{$\lambda$-Fisher family} is a family $\cF$ of sets with $\card{F_1 \cap F_2} = \lambda$ for all distinct $F_1, F_2 \in \cF$.  A fundamental result in combinatorial design theory, Fisher's inequality shows that a $\lambda$-Fisher family of subsets of $[n]$ can contain at most $n$ non-empty sets.  This simple restriction thus severely constricts the size of the family.  One might hope to find larger families by weakening the conditions somewhat, allowing a limited number of `bad' intersections.  

To this end, we define a \emph{$k$-almost $\lambda$-Fisher family} as a family $\cF$ of sets such that for every $F \in \cF$, there are at most $k$ other sets $F' \in \cF$ with $\card{F \cap F'} \neq \lambda$.  We may then extend Fisher's inequality by determining how large a $k$-almost $\lambda$-Fisher family over $[n]$ can be.  We denote this maximum by $f(n,k,\lambda)$.  This problem was first introduced by Vu in 1999, who solved the case $k = 1$.  In this paper we sharpen Vu's result, determining how the size of the maximum family depends on $\lambda$ when $k=1$, and solve the next open case of $k=2$. We also obtain the first non-trivial upper bound for 
general $k$ and provide a tight estimate for $\lambda=0$.

We now discuss the background of this problem in greater detail before presenting our new results.

\subsection{Restricted intersections}

Extremal set theory is a rapidly developing area of combinatorics and has enjoyed tremendous growth in the past few decades.  No doubt this is fuelled by its deep connections to other areas; extremal set theory both employs methods from and enjoys applications to diverse fields such as algebra, geometry and coding theory.

Many problems in extremal set theory are concerned with the pairwise intersections between sets in a family.  For instance, much research concerns intersecting families, where empty pairwise intersections are forbidden.  An intersecting family cannot contain a complementary pair of sets, and so can have size at most $2^{n-1}$, a bound attained by many constructions.  The celebrated Erd\H{o}s--Ko--Rado theorem \cite{ekr61}, one of the cornerstones of extremal set theory, provides the corresponding extremal result for $k$-uniform intersecting families.

Rather than forbidding empty intersections, we might seek to forbid other intersection sizes instead.  Note that we would expect two uniformly random subsets of $[n]$ to intersect in $n/4$ elements.  Erd\H{o}s asked whether a family without a pairwise intersection of size exactly $n/4$ must have exponentially fewer than $2^n$ sets.  This conjecture was resolved by Frankl and R\"odl \cite{fr87}, who obtained a stronger result in the more general setting of codes (see \cite{kl14} for a recent improvement).

As opposed to forbidding intersection sizes, one might instead require that all pairwise intersections be of the same size, and unsurprisingly this proves to be a much more restrictive condition.  We call a family of sets a \emph{$\lambda$-Fisher family} if any two distinct sets intersect in $\lambda$ elements.  The foundational result in this direction is Fisher's inequality \cite{f40}, bounding the size of a $\lambda$-Fisher family.  Fisher's original result dealt with more restrictive designs, and was extended to uniform $\lambda$-Fisher families by Bose \cite{b49}.  The following non-uniform version was proven by Majumdar \cite{m53} and rediscovered by Isbell \cite{i59}.

\begin{thm} \label{thm:fisher}
A $\lambda$-Fisher family over $[n]$ can have at most $n$ non-empty sets.
\end{thm}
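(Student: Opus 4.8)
The plan is to prove Fisher's inequality via the standard linear algebra method, associating to each set its characteristic vector and showing these vectors are linearly independent over $\RR$.

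First I would set up the incidence vectors. Let $\cF = \{F_1, \dots, F_m\}$ be a $\lambda$-Fisher family of non-empty subsets of $[n]$, and to each $F_i$ associate its characteristic vector $v_i \in \RR^n$, where the $j$-th coordinate of $v_i$ is $1$ if $j \in F_i$ and $0$ otherwise. The key observation is that $\langle v_i, v_j \rangle = \card{F_i \cap F_j}$, so the Gram matrix $M = (\langle v_i, v_j \rangle)_{i,j}$ has off-diagonal entries all equal to $\lambda$ and diagonal entries $\card{F_i}$. Since each $v_i$ lives in $\RR^n$, if I can show the $v_i$ are linearly independent then $m \le n$, which is exactly the claimed bound.

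To establish linear independence, I would prove that $M$ is positive definite (equivalently, nonsingular), since $M = V^{\top} V$ where $V$ is the $n \times m$ matrix with columns $v_i$, and a nonsingular Gram matrix forces the columns to be independent. Write $M = D + \lambda J$, where $J$ is the all-ones matrix and $D$ is diagonal with entries $d_i = \card{F_i} - \lambda$. The main case is when every $d_i > 0$, i.e. every set is strictly larger than $\lambda$; here $D$ is positive definite and $\lambda J$ is positive semidefinite, so $M = D + \lambda J$ is positive definite and hence nonsingular, giving the bound. One can make this explicit by computing $\det M = \left(\prod_i d_i\right)\left(1 + \lambda \sum_i d_i^{-1}\right) > 0$ via the matrix determinant lemma.

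The main obstacle, and the only genuinely delicate point, is handling the degenerate cases where some $d_i \le 0$. If some set $F_i$ has $\card{F_i} = \lambda$, then since all its pairwise intersections also have size $\lambda$, it is contained in every other set; a short argument shows there can be at most one such set, and removing it reduces to the main case on a family of size $m-1$ while the bound to prove drops to $n-1$ only if we are careful, so instead I would simply note this set contributes harmlessly and verify independence directly. The case $\lambda = 0$ (pairwise disjoint non-empty sets) is immediate since the $v_i$ have disjoint supports. After disposing of these boundary situations, the positive-definiteness argument yields $m \le n$ in full generality, completing the proof.
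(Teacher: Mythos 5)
Your proof is correct, but there is nothing in the paper to compare it against: the paper states Theorem \ref{thm:fisher} as a classical result (Fisher's inequality in its non-uniform form, due to Majumdar and Isbell) and cites it without proof. What you have written is the standard linear-algebra proof of that classical theorem, and it is sound: the Gram matrix decomposition $M = D + \lambda J$, the positive-definiteness argument when every $d_i = \card{F_i} - \lambda$ is positive, and the determinant formula $\det M = \bigl(\prod_i d_i\bigr)\bigl(1 + \lambda \sum_i d_i^{-1}\bigr)$ are all right.

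One remark on the only delicate point, which you identify but leave slightly vague. Your sentence about removing a set with $\card{F_i}=\lambda$ is muddled (removal alone only gives $m-1 \le n$, hence $m \le n+1$), but your fallback --- ``verify independence directly'' --- does go through in one line, and it is worth recording why: for any $x \in \RR^m$,
\begin{equation*}
x^{\top} M x \;=\; \sum_i d_i x_i^2 \;+\; \lambda \Bigl(\sum_i x_i\Bigr)^2 .
\end{equation*}
Since distinct sets of size $\lambda$ would each contain the other, at most one $d_i$ can vanish, say $d_1 = 0$; then $x^{\top}Mx = 0$ with $\lambda > 0$ forces $x_i = 0$ for $i \ge 2$ and $\sum_i x_i = 0$, hence $x = 0$, so $M$ is positive definite and the characteristic vectors are independent even in this degenerate case. (Alternatively: all other sets then contain $F_1$ and pairwise intersect exactly in $F_1$, so the sets $F_j \setminus F_1$ are disjoint, non-empty subsets of $[n]\setminus F_1$, giving $m - 1 \le n - \lambda$.) Together with your separate treatment of $\lambda = 0$, where disjoint supports give independence immediately, this closes every case and the proof is complete.
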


Note that the non-empty condition is necessary, as when $\lambda = 0$, one may take the empty set in addition to the $n$ singletons.  However, when $\lambda > 0$, a $\lambda$-Fisher family cannot contain the empty set.  The classification of extremal constructions remains one of the most important problems in combinatorial design theory.  In the case $\lambda = 1$, this reduces to the famous de Bruijn--Erd\H{o}s theorem \cite{de46}, for which the extremal constructions are known precisely.

Theorem \ref{thm:fisher} has inspired a great deal of research, having been extended in numerous directions by several renowned mathematicians.  Ray-Chaudhuri and Wilson \cite{rw75} provided bounds on families where $s$ different intersection sizes are allowed, and Frankl and Wilson \cite{fw81} considered the problem where the sizes of sets and intersections are taken modulo a prime $p$.  These results have proven remarkably useful in the field of discrete geometry.  For a more detailed history of this branch of research, we refer the reader to Babai and Frankl's monograph on the linear algebra method \cite{bf92}.

Another extension that has attracted plenty of attention in recent years is to restrict the sizes of $k$-wise intersections instead of pairwise intersections.  This problem was first raised by S\'os \cite{s76}, and resolved in the uniform setting by F\"uredi \cite{f83}.  Vu \cite{v99} studied the problem for intersections modulo $2$, and Grolmusz and Sudakov \cite{gs02} extended this to systems modulo an arbitrary prime.  Exact results in this setting were obtained by Szab\'o and Vu \cite{sv05}.  In the non-modular setting, asymptotically sharp results were provided by F\"uredi and Sudakov \cite{fs04}.

\subsection{Almost-Fisher families}

We seek a different extension of Fisher's inequality.  As we have seen, requiring that \emph{all} pairwise intersections in a set family $\cF$ have size $\lambda$ severely restricts the size of $\cF$.  One might hope to find larger families by relaxing the condition and allowing a few `bad' intersections to appear.  This practice of weakening the conditions locally is widespread in combinatorics; for instance, Alon, Jiang, Miller and Pritikin \cite{ajmp03} generalised the concept of proper edge-colourings of graphs by allowing every colour to be used a bounded number of times at any vertex, while Gerbner, Lemons, Palmer, Patk\'os and Sz\'ecsi \cite{glpps12} and Scott and Wilmer \cite{sw13} have studied the corresponding weakened version of the Erd\H{o}s--Ko--Rado theorem, bounding the size of families with each set being disjoint from a fixed number of other sets.

The weakened version of Fisher's inequality was first introduced by Vu \cite{v99}.  Recall a family $\cF$ of sets is \emph{$k$-almost $\lambda$-Fisher} if for every set $F \in \cF$ there are at most $k$ other sets $F' \in \cF \setminus \{F\}$ for which $\card{F \cap F'} \neq \lambda$.  In particular, note that when $k = 0$ this reduces to the $\lambda$-Fisher families defined previously.  We are interested in bounding the size of a $k$-almost $\lambda$-Fisher family over $[n]$, and denote the largest possible size by
\[ f(n,k,\lambda) = \max \left\{ \card{\cF} : \cF \subset 2^{[n]} \textrm{ is a $k$-almost $\lambda$-Fisher family} \right\}. \]

Most results regarding restricted intersections are proven by linear algebraic methods, using the restrictions on the system to build a linearly independent set of vectors in an appropriate vector space.  Dimensional arguments then provide the required bound on the size of the set family.  Allowing some intersections of different sizes destroys the linear independence of these vectors.  However, by marrying the algebraic arguments with some graph theoretic considerations, we can still recover some bounds on $f(n,k,\lambda)$.

Given a $k$-almost $\lambda$-Fisher family $\cF$, we can define an auxiliary graph $G = G(\cF) = (V,E)$, where $V = \cF$ and $\{F, F'\} \in E$ if and only if $\card{F \cap F'} \neq \lambda$.  Since every set in $\cF$ can have at most $k$ pairwise intersections not equal to $\lambda$, it follows that the maximum degree of $G$ is at most $k$.  Moreover, an independent set in $G$ corresponds to a $\lambda$-Fisher subfamily of $\cF$.  Since any such family can have at most $n$ non-empty sets, and any graph with $m$ vertices and maximum degree $\Delta$ has an independent set of size at least $m / (\Delta + 1)$, we arrive at the following upper bound for $f(n,k,\lambda)$, first given in \cite{v99}.

\begin{prop} \label{prop:trivial}
For any positive integer $n$ and non-negative integers $k$ and $\lambda$, $f(n,k,\lambda) \le (k + 1)n + 1$.  Moreover, if $\lambda \neq 0$, we can improve this bound to $f(n,k,\lambda) \le (k+1)n$.
\end{prop}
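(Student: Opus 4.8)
The plan is to carry out the auxiliary-graph argument sketched above, but to treat the empty set separately so as to squeeze out the sharper constants. Given a $k$-almost $\lambda$-Fisher family $\cF \subseteq 2^{[n]}$, I would form the graph $G = G(\cF)$ in which $F \sim F'$ precisely when $\card{F \cap F'} \neq \lambda$. The two facts driving everything are: every vertex has degree at most $k$ (by the defining property of $\cF$), and every independent set of $G$ is a $\lambda$-Fisher subfamily, to which Theorem~\ref{thm:fisher} applies. The only quantitative input beyond these is the standard greedy bound that a graph on $m$ vertices of maximum degree $\Delta$ contains an independent set of size at least $m/(\Delta+1)$ (repeatedly delete a vertex together with its $\le \Delta$ neighbours).

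For the first bound I would set $\cF' = \cF \setminus \{\emptyset\}$, a family of $m' = \card{\cF'}$ \emph{non-empty} sets, and apply the greedy bound to $G(\cF')$ (whose maximum degree is still at most $k$). This produces an independent set $\mathcal{I} \subseteq \cF'$ with $\card{\mathcal{I}} \ge m'/(k+1)$; since $\mathcal{I}$ is a $\lambda$-Fisher family consisting of non-empty sets, Theorem~\ref{thm:fisher} gives $\card{\mathcal{I}} \le n$. Chaining the two inequalities yields $m'/(k+1) \le n$, hence $m' \le (k+1)n$, and since $\cF$ exceeds $\cF'$ by at most the single empty set we obtain $\card{\cF} \le (k+1)n + 1$.

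The refinement for $\lambda \neq 0$ comes from noting that in this regime the empty set cannot coexist peacefully with anything: if $\emptyset \in \cF$ then $\card{\emptyset \cap F} = 0 \neq \lambda$ for every other $F \in \cF$, so $\emptyset$ has $\card{\cF} - 1$ bad neighbours, forcing $\card{\cF} \le k + 1 \le (k+1)n$ (using $n \ge 1$). Otherwise $\emptyset \notin \cF$, so $\cF = \cF'$ and the computation above already gives $\card{\cF} \le (k+1)n$. Either way the $+1$ disappears.

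There is no serious obstacle here — the argument is short — but the one point that needs care, and the reason the naive version of the argument loses a factor, is the bookkeeping around the empty set. Applying Fisher's inequality directly to an independent set of $G(\cF)$ only bounds it by $n+1$ (allowing for a possible empty member), which would propagate to the wasteful $(k+1)(n+1)$; peeling off $\emptyset$ first is exactly what lets Fisher contribute the clean bound $n$ and, when $\lambda \neq 0$, what powers the stronger estimate.
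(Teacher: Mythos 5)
Your proof is correct and follows essentially the same route as the paper: the auxiliary graph $G(\cF)$ of maximum degree at most $k$, the greedy independent-set bound $m/(k+1)$, and Fisher's inequality applied to the resulting $\lambda$-Fisher subfamily. The only difference is that you spell out the empty-set bookkeeping (peeling off $\emptyset$ first, and noting that for $\lambda \neq 0$ the empty set would have $\card{\cF}-1$ bad neighbours) which the paper leaves implicit in its sketch.
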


Vu \cite{v99} showed that this essentially gives the correct bound when $k = 1$, and was further able to prove that the extremal constructions arise from Hadamard matrices (we describe this construction in greater detail in Section \ref{sec:k1prelim}).

\begin{thm}\label{thm:vu}
For $n \ge 3$ and for any non-negative $\lambda$, $f(n,1,\lambda) \le 2(n-1)$.  Moreover, if $n \ge 4$ and equality holds, then $\lambda = n/4$ and a Hadamard matrix of order $n$ exists.
\end{thm}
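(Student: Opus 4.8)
The plan is to combine a linear-algebraic rank bound with the graph-theoretic decomposition of $\cF$, and then to squeeze out the final factor of two by analysing the degenerate configurations. Since the auxiliary graph $G(\cF)$ has maximum degree at most $1$, I would first record that $\cF$ splits into $p$ ``bad pairs'' $\{A_i,B_i\}$ with $|A_i\cap B_i|\ne\lambda$ and $q$ ``singletons'' $C_j$ each meeting every other set in exactly $\lambda$, so that $|\cF|=2p+q$. A few routine reductions come next: every set has size at least $\lambda$, at most one set has size exactly $\lambda$, and the case $\lambda=0$ can be disposed of separately (disjointness across components forces $2p+q\le n+1\le 2(n-1)$ for $n\ge3$), so from now on I assume $\lambda\ge1$. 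The two subfamilies $\{A_i\}_{i}\cup\{C_j\}_j$ and $\{B_i\}_i\cup\{C_j\}_j$ are genuine $\lambda$-Fisher families, so Theorem~\ref{thm:fisher} immediately yields the ``half'' bound $p+q\le n$.

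For the linear algebra, let $v_F\in\RR^n$ be the characteristic vector of $F$ and let $M=(|F\cap F'|)_{F,F'}$ be the Gram matrix, which is positive semidefinite with $\rank M\le n$. Writing $M=\lambda J+N$ with $J$ the all-ones matrix, the matrix $N$ is block-diagonal: one $2\times2$ block $\left(\begin{smallmatrix}|A_i|-\lambda & |A_i\cap B_i|-\lambda\\ |A_i\cap B_i|-\lambda & |B_i|-\lambda\end{smallmatrix}\right)$ per pair and one $1\times1$ block $|C_j|-\lambda$ per singleton. Since $N$ is obtained from the PSD matrix $M$ by subtracting the rank-one PSD matrix $\lambda J$, Weyl's inequalities give that $N$ has at most one negative eigenvalue and at most $\rank M\le n$ positive ones, whence $\rank N\le n+1$. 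On the other hand each pair block is nonzero (as $A_i\ne B_i$), so it has rank $1$ or $2$, being singular exactly when $(|A_i|-\lambda)(|B_i|-\lambda)=(|A_i\cap B_i|-\lambda)^2$; counting ranks block by block gives $\rank N=2p+q-s-z$, where $s$ is the number of singular pairs and $z\le1$ the number of singletons of size $\lambda$. Hence $|\cF|=2p+q=\rank N+s+z\le n+2+s$.

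The two estimates now point in complementary directions: when $s$ is small the rank bound already beats $2(n-1)$, while when $s$ is large I must instead lean on $p+q\le n$. The crux of the proof is therefore the regime of many singular (degenerate) pairs, which is precisely the Hadamard regime: in the extremal construction every pair is singular and $q=0$. Here I expect the main obstacle to be ruling out $p+q=n$ together with all partners $B_i$ being present; the idea is that $p+q=n$ forces $\{A_i\}\cup\{C_j\}$ to be an extremal $\lambda$-Fisher family, whose rigid (symmetric-design) structure, combined with the singularity relation for each pair, cannot coexist with a full set of distinct partners $B_i$ without collapsing the dimension count, so that at least one pair is lost and $2p+q\le 2(n-1)$. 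Making this trade-off quantitative --- balancing the deficiency $s$ of $\rank N$ against the slack in $p+q\le n$ --- is the technical heart of the argument.

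Finally, for the equality characterisation when $n\ge4$, I would trace the inequalities backwards. Equality forces $q=0$, every pair singular, and the size analysis forces $|A_i|=|B_i|$ and $A_i\cap B_i=\emptyset$, i.e.\ $B_i=\comp{A_i}$ with $|A_i|=n/2$; then $\lambda=|A_i\cap A_{i'}|$ together with $|A_i|=n/2$ gives $\lambda=n/4$. Encoding each $A_i$ by the $\pm1$ vector $2v_{A_i}-\mathbf1$, the condition $|A_i\cap A_{i'}|=n/4$ becomes orthogonality, so the $n-1$ vectors $2v_{A_i}-\mathbf1$ together with $\mathbf1$ form $n$ mutually orthogonal $\pm1$ vectors in $\RR^n$ --- exactly a Hadamard matrix of order $n$.
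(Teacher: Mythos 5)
Your opening moves are sound, and they coincide with the standard ones (note that the paper itself does not prove this theorem --- it quotes it from Vu --- but the relevant machinery appears in Section \ref{sec:kequalsone}): the split $\card{\cF}=2p+q$ into bad pairs and singletons, the Fisher bound $p+q\le n$ for $\{A_i\}_i\cup\{C_j\}_j$, and the inertia argument giving $\rank(N)\le n+1$, which is exactly Lemma \ref{lemma:vurank} and yields $2p+q\le n+2+s$ with $s$ the number of singular pair blocks. The genuine gap is that your proof stops precisely where the theorem becomes hard. When $s$ is large, all your two estimates give is $2p+q=(p+q)+p\le n+p\le 2n$ (indeed $p=s=n$, $q=0$ is consistent with both), and nothing you write brings this down to $2(n-1)$; the paragraph beginning ``The crux of the proof'' is a hope rather than an argument, as you concede by calling it the unfinished ``technical heart.'' Moreover, the route you propose for that step cannot work as stated: you want to lean on the ``rigid (symmetric-design) structure'' of an extremal $\lambda$-Fisher family with $p+q=n$ sets, but no such classification exists --- the paper points out in the introduction that classifying extremal Fisher families is a major open problem --- and in any case you would need to handle $p+q<n$ with $s$ close to $n$, not just exact equality.

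What actually closes this regime, both in Vu's proof and in the related analysis of Section \ref{sec:kequalsone}, is a structural study of the singular pairs themselves, which you never undertake. Equality in \eqref{eqn:determinant} forces $\card{A_i\cap B_i}<\lambda$; and once there are at least two singular pairs, the requirement that every set meet both members of every other singular pair in exactly $\lambda$ elements forces all sets belonging to singular pairs to contain a common core $V$ of size $\nu<\lambda$ and, outside $V$, each such pair to partition one fixed set $U$ of size $4\mu$, $\mu=\lambda-\nu$, into two halves of size $2\mu$. Only with this geometry in hand can one count sharply: for instance, the $\pm1$ vectors $2v_{A_i}-\mathbf{1}$ restricted to $U$ are pairwise orthogonal and orthogonal to $\mathbf{1}_U$, giving $s+1\le\card{U}\le n-\nu$, after which the singletons and non-singular pairs still have to be folded in (the paper runs a version of this via the Plotkin bound, Lemma \ref{lemma:distances}). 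Your equality characterisation inherits the same hole: the assertions that equality forces $q=0$, $A_i\cap B_i=\emptyset$ and $\card{A_i}=\card{B_i}=n/2$ are exactly the conclusions of this missing structural step, not consequences of anything you established; only your very last step --- that disjoint complementary pairs with pairwise intersections $n/4$ encode a Hadamard matrix of order $n$ --- is complete as written.
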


For large values of $k$, however, Vu noted that Proposition \ref{prop:trivial} appears to be far from the truth, and asked to determine the correct 
behaviour of $f(n,k,\lambda)$.

\subsection{Our results}

In this paper we continue the study of $k$-almost $\lambda$-Fisher families, approaching the problem of determining $f(n,k,\lambda)$ from a few different directions.  In doing so, we are able to substantially refine Vu's result, while also obtaining some evidence that $nk/4$ might be the correct asymptotic behaviour for large $k$.

As a warm up, we start with the case $\lambda = 0$, and for brevity call a $k$-almost $0$-Fisher family a $k$-almost disjoint family.  We obtain the following bound on $f(n,k,0)$, and show for every $k$ that this is tight for infinitely many values of $n$.

\begin{thm} \label{thm:almostdisjoint}
For any positive integers $n$ and $k$, we have
\[ f(n,k,0) \le \frac{n}{k} \floor{\frac{k^2}{4}} + n + 1. \]
\end{thm}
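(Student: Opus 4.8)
The plan is to translate the problem into the auxiliary graph $G=G(\cF)$ from the discussion preceding Proposition~\ref{prop:trivial}, where $V=\cF$ and $F,F'$ are joined exactly when $F\cap F'\neq\emptyset$, so that $\Delta(G)\le k$. The first observation is that sets lying in two different connected components of $G$ are disjoint, so the element sets $U(C)=\bigcup_{F\in C}F$ of distinct components $C$ are pairwise disjoint subsets of $[n]$. Writing $m_C=|U(C)|$ we therefore get $\sum_C m_C\le n$, which is what will produce the clean linear dependence on $n$. The empty set, if present, is an isolated vertex and is set aside to account for the additive $+1$. It then suffices to prove a per-component bound $|C|\le \frac{m_C}{k}\floor{k^2/4}+m_C$ for each component with $m_C\ge 1$ and sum, since $\sum_C m_C\le n$ delivers $\frac{n}{k}\floor{k^2/4}+n$.

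Fix a component $C$. There is at most one singleton per element of $U(C)$, so singletons contribute at most $m_C$ sets; this is the origin of the linear term $+m_C$, and globally of the $+n$. It remains to bound the sets of size $\ge 2$, for which pairs are the efficient case: modelling each $2$-set $\{u,v\}$ as an edge of a graph $H$ on $U(C)$, the bound $\Delta(G)\le k$ translates into the degree-sum condition $\deg_H(u)+\deg_H(v)\le k+2$ on every edge (sharpened to $\le k+2-\sigma_u-\sigma_v$ when the singletons $\{u\},\{v\}$ also occur, where $\sigma_w\in\{0,1\}$ records their presence). A Cauchy–Schwarz computation $\sum_{uv\in E(H)}(\deg_H u+\deg_H v)=\sum_w \deg_H(w)^2\ge 4|E(H)|^2/m_C$ then bounds $|E(H)|$ and produces the Mantel quantity $\floor{k^2/4}$, matching the extremal $K_{\floor{k/2},\ceil{k/2}}$ of pairs on $k$ elements. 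Larger sets are genuinely less efficient: for a size-$s$ set all of whose pairwise intersections have size $1$, the clique reasoning at each element gives $\sum_{x\in F}d_x\le k+s$, so such sets occur at a rate of at most $(k+s)/s^2$ per element, and since $(k+s)/s^2$ decreases in $s$ the pairs $s=2$ dominate; this explains why the extremal family uses only singletons and pairs.

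I expect two points to carry the real difficulty. First, the degree budget $k$ at each vertex is shared between the pairs through it and the singleton at it, so the singleton/pair interaction must be tracked exactly; one has to verify that this trade-off balances precisely to $\frac{m_C}{k}\floor{k^2/4}+m_C$, with the floor emerging from optimising over integer degree sequences rather than the continuous relaxation $k^2/4$. Second, and more seriously, sets of size $\ge 3$, and more generally any pair of sets meeting in two or more elements, are not faithfully captured by the simple graph $H$: such overlaps waste ground-set elements without creating extra edges, and the neat identity between $G$-edges and $H$-edges breaks. Making the heuristic rate bound $(k+s)/s^2$ rigorous for arbitrary mixtures of set sizes and arbitrary pairwise intersection sizes will require either a compression argument reducing to the size-$\le 2$ case or a single convexity estimate valid across all sizes simultaneously, and this is the crux of the proof.
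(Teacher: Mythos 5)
Your global framing is sound and is genuinely different from the paper's: since $\lambda=0$, sets in distinct components of $G(\cF)$ are disjoint, so the component supports are disjoint subsets of $[n]$, and the theorem would indeed follow from the per-component bound $\card{C}\le \frac{m_C}{k}\floor{k^2/4}+m_C$ (which is true, and tight for the extremal constructions). The paper does not decompose this way; it works globally, by induction on the number of sets of size at least $4$. However, your proposal is a plan rather than a proof, because the two steps you yourself flag as difficult are exactly where all the content of the theorem lies, and neither is carried out. On the constant: the Cauchy--Schwarz estimate $\sum_{uv\in E(H)}(\deg_H u+\deg_H v)\ge 4e(H)^2/m_C$ combined with a degree-sum budget of $k$ per edge only yields $e(H)\le km_C/4$, the continuous relaxation, which is \emph{strictly weaker} than $\frac{m_C}{k}\floor{k^2/4}$ for every odd $k$. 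Extracting the floor is the content of the paper's Proposition \ref{prop:degreesum}, whose proof first equalizes degrees (one may assume all degrees are $d$ or $d-1$) and then, in the critical case $d=(k+1)/2$, counts edges between the two degree classes to bound the number of high-degree vertices by $\frac{n(k-1)}{2k}$. Moreover, your singleton/edge budget trade-off ($k+2-\sigma_u-\sigma_v$) is real but is most cleanly eliminated by compression: the paper replaces sets by subsets until $\cF$ is a down-set, after which all singletons are guaranteed present and every edge has budget exactly $k$.

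The more serious gap is the reduction to sets of size at most $2$. The heuristic that a size-$s$ set occurs at rate at most $(k+s)/s^2$ per element cannot be made rigorous as stated: a large set need not meet many others, and, as you note, pairwise intersections of size $\ge 2$ waste ground elements without creating the edges your graph $H$ records, so there is no uniform per-element charging argument. The paper's treatment shows precisely where the boundary lies: sets of size $s\ge 4$ are removable by induction (such a set, its $\le k+1$ neighbours, and its $\ge 4$ ground elements can be deleted at a favourable exchange rate, since $s\left(\frac{k^2-1}{4k}+1\right)>k+2$ for $s\ge4$), but $3$-sets sit exactly at the threshold where this exchange fails. They require a dedicated argument: after the down-set compression, each $3$-set $T$ is replaced by a carefully chosen pair $T\setminus\{x\}$, where $x$ satisfies $\card{\cF_T(\{x\})}\ge\card{\cF_T(T\setminus\{x\})}$; this produces a multigraph (forcing the multigraph version of Proposition \ref{prop:degreesum}), and the final contradiction hinges on a delicate over/under-counting that uses exactly this choice of $x$ to trade the sets collapsed onto the repeated edge against the sets in $\cF_T(\{w\})$ that the edge count misses. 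Without an argument of this kind --- or the ``single convexity estimate'' you hope for, which you would need to produce --- the proposed approach does not establish the stated bound.
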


In particular, this shows that as $k$ grows, the largest $k$-almost disjoint family has size asymptotically $nk/4$, matching a construction of Vu (for which $\lambda = 2$).

We next turn our attention to the case $k = 1$.  Recall that Vu proved $f(n,1,\lambda) \le 2n - 2$, attainable only if $\lambda = n/4$. It is very natural to ask what happens for other values of $\lambda$, i.e., to study the dependence of the function $f(n,1,\lambda)$ on the parameter $\lambda$.
Here we essentially resolve this question, obtaining the following result (whose tightness we discuss in Section 3.1).

\begin{thm} \label{thm:kequalsone}
For integers $n \ge 1$ and $\lambda \ge 0$, we have
\[ f(n,1,\lambda) \le \max \left\{ n + 2, 8 \min \left\{ \lambda, \frac{n- \lambda}{3} \right\} + o ( \lambda ) \right\}. \]
\end{thm}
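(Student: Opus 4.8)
The plan is to combine the linear-algebraic backbone of Fisher-type arguments with a careful analysis of the auxiliary matching. Write $\cF = \{F_1,\dots,F_m\}$, let $v_i \in \RR^n$ be the characteristic vector of $F_i$, and let $M = (\card{F_i \cap F_j})_{i,j}$ be the associated Gram matrix, so that $M$ is positive semidefinite with $\rank(M) = \dim\mathrm{span}\{v_i\} \le n$. Since $k=1$, the auxiliary graph $G(\cF)$ has maximum degree $1$, i.e.\ it is a matching: $\cF$ splits into a set $\mathcal{I}$ of \emph{isolated} sets, each meeting every other member in exactly $\lambda$, together with \emph{matched pairs} $\{F,F'\}$ for which $\card{F\cap F'} = \mu \neq \lambda$ while each of $F,F'$ meets all remaining sets in $\lambda$. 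Setting $A = M - \lambda J$ (with $J$ the all-ones matrix) and reordering so that matched pairs are consecutive, $A$ becomes block diagonal with $1\times 1$ blocks $(x_F)$ for $F \in \mathcal{I}$, where $x_F = \card{F} - \lambda$, and $2\times 2$ blocks $\bigl(\begin{smallmatrix} x_F & \mu-\lambda \\ \mu-\lambda & x_{F'}\end{smallmatrix}\bigr)$ for matched pairs.

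First I would record the structural preliminaries. For $m \ge 3$ every set meets some other set in $\lambda$, so $\card{F} \ge \lambda$ and $x_F \ge 0$, and at most one isolated set can have $\card{F} = \lambda$ (two such would be nested, hence equal). Each matched block is nonzero (as $\mu \neq \lambda$) and so has rank at least $1$. Since $A = M - \lambda J$ is a positive semidefinite matrix minus a rank-one positive semidefinite matrix, Weyl's inequalities show $A$ has at most one negative eigenvalue, i.e.\ at most one matched block is indefinite. Comparing the block ranks against $\rank(A) \le \rank(M) + 1 \le n+1$ then yields $m \le n + 2 + t_1$, where $t_1$ counts the \emph{singular} matched pairs, namely those whose block has determinant $x_F x_{F'} - (\mu - \lambda)^2 = 0$. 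The problem is thus reduced to controlling $t_1$, and I would extract the arithmetic of a singular pair next. From $\mu \le \min(\card{F},\card{F'})$ one gets $\mu - \lambda \le \sqrt{x_F x_{F'}}$, so singularity forces $\mu = \lambda - \sqrt{x_F x_{F'}}$; in particular $\mu < \lambda$ and, as $\mu \ge 0$, $\sqrt{x_F x_{F'}} \le \lambda$. On the other hand $\card{F \cup F'} \le n$ rearranges to $x_F + x_{F'} + \sqrt{x_F x_{F'}} \le n - \lambda$, and $x_F + x_{F'} \ge 2\sqrt{x_F x_{F'}}$ then gives $\sqrt{x_F x_{F'}} \le (n-\lambda)/3$. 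Hence every singular pair satisfies $\sqrt{x_F x_{F'}} \le L := \min\{\lambda, (n-\lambda)/3\}$, exactly the quantity in the theorem, and the smaller of the two sets has size at most $\lambda + L$.

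The main obstacle is the final step: upgrading the bound $m \le n + 2 + t_1$ to $m \le 8L + o(\lambda)$ once the family is large (so that $t_1$ is large and the crude bound is far from $n+2$). The key lever I would use is that for each matched pair the difference $w = v_F - v_{F'}$ is orthogonal to $v_G$ for \emph{every} other $G \in \cF$, and that the differences from distinct pairs are mutually orthogonal; thus the bad pairs occupy pairwise orthogonal directions, each consuming a dimension. Projecting onto the orthogonal complement $W^\perp$ of these directions, one small representative of size at most $\lambda + L$ from each singular pair, together with the isolated sets, forms a $\lambda$-Fisher-type system; the aim is to show, using the singular-partner structure together with $\sqrt{x_F x_{F'}} \le L$, that this system is confined to an arena of effective dimension $O(L)$, whereupon Fisher's inequality (Theorem~\ref{thm:fisher}) applied in the reduced space caps the number of pairs, and hence $m = \card{\mathcal{I}} + 2t_1$, at $8L$ up to lower-order terms.

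I expect this dimension-reduction bookkeeping, rather than any single inequality, to be the crux. The delicate points are that large sets (those with $x_F > L$), the single possible indefinite pair, and sets of intermediate size must be peeled off and accounted for separately; it is the accumulated error from these exceptional sets that is absorbed into the $o(\lambda)$ term and that pins down the constant $8$, which matches the Hadamard construction at $\lambda = n/4$ where both branches of the maximum equal $2n$. The two terms of the final bound then arise from a clean dichotomy: when there are few bad pairs the backbone bound already gives $m \le n+2$, while when there are many the low-dimensional Fisher count gives $m \le 8L + o(\lambda)$.
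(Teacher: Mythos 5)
Your backbone is sound and matches the first stage of the paper's argument: the matching structure of $G(\cF)$, the block-diagonal rank count giving $m \le n + 2 + t_1$ (the paper's Bound I, $m \le n + r + 1$), and the per-pair arithmetic of singular blocks --- $\card{F \cap F'} = \lambda - \sqrt{x_F x_{F'}} < \lambda$ and $\sqrt{x_F x_{F'}} \le \min\left\{ \lambda, \frac{n-\lambda}{3} \right\}$ --- are all correct, and this square root is exactly the quantity $\mu$ that drives the paper's final bound. The gap is in the last step, which is where essentially all of the work lies, and the mechanism you propose for it would fail. Your central claim, that the $\lambda$-Fisher system formed by one representative per pair together with the isolated sets is ``confined to an arena of effective dimension $O(L)$,'' is not true. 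The equality analysis (which you have not carried out, and which requires playing \emph{different} singular pairs against one another) forces only the sets belonging to singular pairs to share a common core $V$ and a common union $V \cup U$ with $\card{U} = 4\mu$: each isolated set and each non-singular pair is merely forced to meet $U$ in at least $2\mu$ elements, while remaining free to occupy arbitrarily many elements of $W = [n] \setminus (U \cup V)$. Consequently no projection or restriction turns the isolated sets into a constant-intersection family on a ground set of size $O(L)$, and Fisher's inequality cannot count them. What \emph{is} true of these sets is a distance condition: their pairwise symmetric differences restricted to $U$ have size at least $2\mu$. That is a coding-theoretic hypothesis, not a design-theoretic one, and this is precisely why the paper replaces Fisher's inequality by the Plotkin bound (Lemma \ref{lemma:distances}) at this point.

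Even granting the confinement for the pairs themselves, your plan would only yield $t_1 \le 4L$, and combining this additively with the backbone bound gives $m \le n + 4L + 2$, which exceeds the claimed maximum $\max\left\{ n+2, 8L + o(\lambda) \right\}$ for all intermediate values of $L$ (e.g.\ $L = n/10$). The paper's dichotomy is sharper: either there is at most one singular pair (giving $m \le n + 2$ outright), or there are at least two, in which case the rigidity kicks in and the Plotkin argument caps the \emph{entire} family --- isolated sets and non-singular pairs included --- not just the singular pairs. Moreover, the straightforward Plotkin application only gives $m \le 16\mu$ (the paper's Bound II); obtaining the constant $8$ requires the Hadamard-style complementation trick (doubling the family by complements inside $U$ and averaging over a random common element) together with the error analysis of Lemma \ref{lem:addterms}, which controls the stray contributions over $V$ and $W$ and is the sole source of the $o(\lambda)$ term. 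None of these three ingredients --- the cross-pair rigidity forcing a common $(V,U)$, the substitution of Plotkin for Fisher, and the complementation with error control --- is present in your sketch, so the proposal does not constitute a proof of the second branch of the maximum.
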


Note that $f(n, 1, \lambda)$ is only close to $2n$ when $\lambda$ is close to $n / 4$, providing stability for Theorem \ref{thm:vu}.  Moreover, if $\lambda < n/8$ or $\lambda > 5n/8$, then allowing one non-$\lambda$ intersection per set provides almost no gain compared to Fisher's inequality.

We provide further evidence of the Hadamard construction being atypically large by extending Vu's methods and showing that it is the best possible even when we allow two bad intersections per set.  Once again, we provide stability by showing that $f(n,2,\lambda)$ is much smaller than $2n$ when $\lambda$ is far from $n/4$.  We also show that when $\lambda = o(n)$, $f(n,2,\lambda) = \left(\frac32 + o(1) \right) n$, which is asymptotically the bound obtained when $\lambda = 0$ in Theorem \ref{thm:almostdisjoint}.  This suggests that perhaps the $\lambda = 0$ case exemplifies the true behaviour of the function $f(n,k,\lambda)$ for large $k$, implying that $nk/4$ is the correct bound.

\begin{thm} \label{thm:kequalstwo}
For $n$ sufficiently large and $0 \le \lambda \le n$, we have the bounds
\begin{itemize}
	\item[(i)] $f(n,2,\lambda) \le 2n - 2$.
	\item[(ii)] $f(n,2,\lambda) \le \frac13 \left( 5n + 4 \min \left\{ \lambda,\frac{n-\lambda}{3} \right\} + 7 \right)$.
	\item[(iii)] $f(n,2,\lambda) \le \left( \frac32 + o(1) \right) n$ when $\lambda = o(n)$.
\end{itemize}
\end{thm}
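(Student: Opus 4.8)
The common engine for all three bounds is the Gram-matrix encoding refined by the structure of the auxiliary graph. Let $A$ be the incidence matrix whose rows are the characteristic vectors of the sets in $\cF$ (so $A$ is $m \times n$ with $m = \card{\cF}$), and set $M = A A^{T}$, so that $M_{FF'} = \card{F \cap F'}$. Then $M = \lambda J + D$, where $J$ is the all-ones matrix and $D$ is supported on the diagonal (with entries $\card{F} - \lambda$) and on the edges of $G = G(\cF)$ (with entries $\card{F \cap F'} - \lambda \neq 0$), all other pairs contributing $0$ since their intersections equal $\lambda$. As $G$ has maximum degree $2$ it is a disjoint union of paths and cycles, and $D$ is block-diagonal along these components. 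Two facts drive everything. First, $\rank(M) = \rank(A) \le n$, so $\rank(D) \le n + 1$ and hence $m \le n + 1 + \sum_{C} \mathrm{def}(C)$, where $\mathrm{def}(C) = \card{C} - \rank(D_{C})$ is the rank deficiency of the block of a component $C$. Second, $M \succeq 0$ while $\lambda J$ is a rank-$1$ positive semidefinite matrix for $\lambda > 0$; restricting $x^{T} D x = x^{T} M x$ to the hyperplane orthogonal to the all-ones vector shows $D$ is positive semidefinite on a subspace of codimension $1$, so $D$ has at most one negative eigenvalue. Consequently every block $D_{C}$ is positive semidefinite, with at most one exception.

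For part (i) I would bound $\mathrm{def}(C)$ for each semidefinite component. A path block is tridiagonal with nonzero off-diagonal entries, so its rank is at least $\card{C} - 1$ and $\mathrm{def}(C) \le 1$; for cycles a direct computation gives $\mathrm{def}(C) \le 2$, which positive semidefiniteness sharpens further. A singleton component has block $[\card{F} - \lambda]$, singular exactly when $\card{F} = \lambda$; but two sets meeting in $\lambda$ elements cannot both have size $\lambda$ (the smaller would be contained in the larger), so among the isolated vertices, whose pairwise intersections are all $\lambda$, at most one carries this deficiency. The crux is then a counting argument giving $\sum_{C} \mathrm{def}(C) \le n - 3$ for large $n$: a deficient edge-block forces the rigid equality $(\card{F} - \lambda)(\card{F'} - \lambda) = (\card{F \cap F'} - \lambda)^{2}$, and combining many such relations with the global bound $\rank(A) \le n$ and the single-negative-eigenvalue budget yields the saving that matches Vu's Hadamard bound $2(n-1)$.

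For part (ii) the goal is to track how the deficiency depends on $\lambda$. A component is rank-deficient only if its intersection data satisfy rigid equalities of the above type, and for a semidefinite block these confine $\card{F}$, $\card{F'}$ and $\card{F \cap F'}$ to a window governed by $\lambda$; bounding the number of components admitting such a configuration gives a deficiency estimate of the form $\tfrac{1}{3}(2n + 4\lambda) + O(1)$, which is the first branch of the minimum. To obtain the second branch $(n - \lambda)/3$ I would pass to complements: replacing each $F$ by $\comp{F}$ sends $\card{F \cap F'}$ to $n - \card{F} - \card{F'} + \card{F \cap F'}$, and after accounting for the set sizes this converts the large-$\lambda$ regime into a small-parameter regime to which the same estimate applies. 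Taking the better of the two bounds produces the stated $\min\{\lambda, (n-\lambda)/3\}$.

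Finally, for part (iii) with $\lambda = o(n)$, I would follow the argument behind Theorem \ref{thm:almostdisjoint}: when $\lambda$ is small the $\lambda$-intersection condition is close to disjointness, so the extremal behaviour degenerates to that of $2$-almost disjoint families, whose size is $(3/2 + o(1))n$ by the $k = 2$ case of that theorem, with the $o(n)$ slack absorbing the perturbation from $\lambda \neq 0$. I expect the main obstacle throughout to be the component-by-component deficiency analysis, particularly controlling cycles and paths of length at least two where the naive tridiagonal bound is too weak, together with the combinatorial accounting that limits the number of simultaneously rank-deficient components; this is precisely where the positive semidefiniteness of $M$ and the global bound $\rank(A) \le n$ must be exploited beyond a purely local, block-wise estimate.
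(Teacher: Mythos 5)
Your setup coincides with the paper's framework (your $D$ is exactly the paper's intersection matrix $M(\cF) = A^TA - \lambda J$, block-diagonal over the path/cycle components of $G(\cF)$, with $m = \rank(D) + \sum_C \mathrm{def}(C)$ and $\rank(D) \le n+1$), and your observation that $D$ has at most one negative eigenvalue is correct. But the crux you assert for part (i) is false: the inequality $\sum_C \mathrm{def}(C) \le n-3$ fails precisely for the extremal Hadamard family, which consists of $n-1$ rank-$1$ $P_1$ blocks, each of deficiency $1$, so that $\sum_C \mathrm{def}(C) = n-1$. The bound $m \le 2n-2$ cannot come from a uniform cap on total deficiency; it requires a \emph{trade-off} between $\rank(D)$ and the deficiency count. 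This is what the paper supplies: Lemma \ref{lem:lowrankspan} shows that a deficient block (rank-$1$ $P_1$ or rank-$2$ $C_4$) cannot span the all-ones vector, which by Lemma \ref{lemma:vurank} forces $\rank(D) \le \rank(A) \le n$ whenever such a block exists; and in the remaining hard regime, where almost every block is deficient and rank counting alone yields only $m \le 2n+2$, the paper abandons deficiency accounting and instead constructs $\frac12 m + 1$ explicitly linearly independent vectors in $\RR^n$ out of the characteristic vectors (one vector $v_{j_1}-v_{j_2}$ per deficient block, plus extra vectors from one distinguished block, with independence guaranteed by Lemma \ref{lem:(0,1)-vectors}). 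Your proposal contains no mechanism playing this role, and the "single negative eigenvalue budget" cannot substitute for it, since all the deficient blocks in the Hadamard example are positive semidefinite.

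Parts (ii) and (iii) have the same character: the quantities that need to be bounded are asserted rather than proven, and two of the proposed mechanisms are invalid. For (ii), the paper must show (Lemma \ref{lemma:boundingthenumberoflowrankstructures}) that there are at most $4\min\{\lambda,\frac{n-\lambda}{3}\}$ rank-$1$ $P_1$'s and at most one rank-$2$ $C_4$; the former re-uses the entire $k=1$ machinery of Section \ref{sec:kequalsone} (rank-$1$ pairs share a common core $V$ and partition a common universe $U$ of size $4\mu$, and the Plotkin bound caps their number by $4\mu$), and the latter needs a separate structural argument. Your complementation trick for the $(n-\lambda)/3$ branch does not work: replacing sets of varying sizes by their complements does not turn a $2$-almost $\lambda$-Fisher family into a $2$-almost $\lambda'$-Fisher family for any single $\lambda'$, since $\card{\comp{F} \cap \comp{F'}} = n - \card{F} - \card{F'} + \card{F\cap F'}$ depends on the individual set sizes. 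In the paper the minimum arises instead from the two constraints $\mu \le \lambda$ and $4\mu + (\lambda - \mu) \le n$. For (iii), there is no perturbation principle reducing $\lambda = o(n)$ to $\lambda = 0$: Theorem \ref{thm:almostdisjoint}'s proof (induction on sets of size at least $4$, down-set reduction) is specific to disjointness, and small positive $\lambda$ permits sunflower-like structures it cannot handle. The paper's actual argument for (iii) runs an induction on nested pairs $F \subset F'$ with $\card{F'\setminus F}\ge 2$, then bounds the number of rank-$3$ $C_5$'s by $2\lambda + \sqrt{\lambda n} + O(1)$ via the Type I/Type II analysis of Lemma \ref{lem:boundingC5s}, and feeds all three counts into the refined rank inequality \eqref{eqn:withC5}; none of this is present in, or recoverable from, your sketch.
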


Finally, we are able to use our results to give the first non-trivial upper bound for general $k$.

\begin{cor} \label{cor:largek}
For $k \ge 1$, we have $f(n,k,\lambda) \le (2n - 2) \ceil{\frac{k+1}{3}}$.  Moreover, if $\lambda = o(n)$, then $f(n,k,\lambda) \le \left(\frac32 + o(1)\right) n \ceil{\frac{k+1}{3}}$.
\end{cor}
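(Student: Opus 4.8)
The plan is to reduce the general-$k$ problem to the already-resolved case $k=2$ by decomposing the auxiliary graph. Given a $k$-almost $\lambda$-Fisher family $\cF$ over $[n]$, form the auxiliary graph $G = G(\cF)$ as in the introduction, whose maximum degree is at most $k$. The key observation is that any induced subgraph of $G$ with maximum degree at most $2$ corresponds to a subfamily in which every set meets at most two others in a number of elements different from $\lambda$ — that is, a $2$-almost $\lambda$-Fisher family over $[n]$, to which Theorem \ref{thm:kequalstwo} applies verbatim.

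So the first step is to partition $V(G) = \cF$ into as few parts as possible, each inducing a subgraph of maximum degree at most $2$. I would appeal to a defective-colouring argument of Lov\'asz type: a graph of maximum degree $\Delta$ admits a partition into $s$ parts each inducing maximum degree at most $d$ whenever $s(d+1) \ge \Delta+1$. Taking $\Delta = k$ and $d = 2$ yields a partition $\cF = \cF_1 \cup \cdots \cup \cF_s$ with $s = \ceil{(k+1)/3}$ and each $G[\cF_i]$ of maximum degree at most $2$. The proof of this fact is short and self-contained: take a partition into $s$ parts minimising the total number of edges lying inside the parts; if some vertex $v \in \cF_i$ had more than $2$ neighbours inside $\cF_i$, then since its $\deg_G(v) \le k$ neighbours are spread among $s \ge (k+1)/3$ parts, some part would contain at most $k/s < 3$ — hence at most $2$ — neighbours of $v$; as this part cannot be $\cF_i$ itself, moving $v$ there strictly reduces the number of internal edges, a contradiction.

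With the partition in hand the conclusion follows immediately. Each $\cF_i$ is a $2$-almost $\lambda$-Fisher family over $[n]$, so for $n$ large Theorem \ref{thm:kequalstwo}(i) gives $|\cF_i| \le 2n-2$, and summing over the $s = \ceil{(k+1)/3}$ parts yields $|\cF| \le (2n-2)\ceil{(k+1)/3}$. For the refined bound when $\lambda = o(n)$, one instead applies Theorem \ref{thm:kequalstwo}(iii) to each part, obtaining $|\cF_i| \le (3/2 + o(1))n$ and hence $|\cF| \le (3/2+o(1)) n \ceil{(k+1)/3}$.

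I expect no serious obstacle here, since all the analytic work is already packaged inside Theorem \ref{thm:kequalstwo}; the only points needing care are checking that each induced part really is a $2$-almost $\lambda$-Fisher family over the same ground set $[n]$ (so that the theorem applies directly) and confirming that the defective-colouring bound produces exactly $\ceil{(k+1)/3}$ parts. As a sanity check, for $k \in \{1,2\}$ one has $\ceil{(k+1)/3} = 1$, so the corollary correctly recovers the bound $f(n,k,\lambda) \le 2n-2$ of Theorems \ref{thm:vu} and \ref{thm:kequalstwo}(i).
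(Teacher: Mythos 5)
Your proposal is correct and follows essentially the same route as the paper: the paper also partitions the auxiliary graph $G(\cF)$ into $\ceil{\frac{k+1}{3}}$ induced subgraphs of maximum degree at most $2$ (citing Lov\'asz's decomposition theorem, for which you supply the standard edge-minimising proof) and then applies parts (i) and (iii) of Theorem \ref{thm:kequalstwo} to the resulting $2$-almost $\lambda$-Fisher subfamilies. The only cosmetic difference is that the paper invokes Lov\'asz's result as a black box rather than reproving it.
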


\subsection{Outline and notation}
\label{subsec:outline and notation}

Our paper is organized as follows.  In Section \ref{sec:almostdisjoint}, we prove Theorem \ref{thm:almostdisjoint}, bounding the size of $k$-almost disjoint families.  In Section \ref{sec:kequalsone}, we carefully analyse the structure of $1$-almost $\lambda$-Fisher families and prove Theorem \ref{thm:kequalsone}.  In Section \ref{sec:kequalstwo}, we extend these arguments to $2$-almost $\lambda$-Fisher families, proving Theorem \ref{thm:kequalstwo}.  In Section \ref{sec:largek} we deduce Corollary \ref{cor:largek}.  In the final section we provide some concluding remarks and open questions.  Some technical lemmas needed in Section \ref{sec:kequalstwo} are proven in Appendix \ref{app:appendix}.

Although we are studying set families $\cF = \{F_1, F_2, \hdots, F_m\}$ over the ground set $[n] = \{1, 2, \hdots, n\}$, we shall often think about them in terms of the auxiliary graph $G(\cF)$ or in terms of the characteristic vectors of the sets over $\{0,1\}^n$.  Recall that the auxiliary graph $G(\cF)=(V,E)$ has $V = \cF$, with an edge $\{F, F'\}$ if and only if $\card{F \cap F'} \neq \lambda$.  We shall use $P_t$ to denote a path on $t+1$ vertices and $C_t$ to denote a cycle of $t$ vertices.

Given a set $F$, its characteristic vector $v_F \in \{0,1\}^n$ is defined by $(v_F)_j = 1$ for all $j \in F$ and $(v_F)_j = 0$ for $j \notin F$.  If our sets are indexed, we will sometimes abbreviate notation by writing $v_i = v_{F_i}$.  Given a set family $\cF = \{F_1, F_2, \hdots, F_m\}$, the \emph{element-set incidence matrix} $A(\cF)$ is an $n \times m$ matrix whose columns are the characteristic vectors $v_i$.  Thus $A(\cF)$ is a $\{0,1\}$-matrix with $A(\cF)_{j,i} = 1$ if and only if $j \in F_i$.

We then define the \emph{intersection matrix} $M(\cF)$ to be an $m \times m$ matrix with $M(\cF)_{i,j} = \card{F_i \cap F_j} - \lambda$.  Note that $M(\cF) = A(\cF)^T A(\cF) - \lambda J_m$, where $J_m$ is the $m \times m$ matrix with all entries equal to $1$.  Many of our arguments will rely on the rank of the intersection matrix over the reals, and hence any reference to the rank should be understood as the rank of $M(\cF)$ over $\mathbb{R}$, unless otherwise stated.

When the underlying set family is understood from context, we will often suppress it in the notation, writing $G$, $A$ and $M$ instead.  For brevity, we will often combine terminology from graph theory and linear algebra when referring to our set families.  For instance, we will call a set family $\cF$ a \emph{rank-$3$ $C_5$} if $\rank (M(\cF)) = 3$ and the corresponding auxiliary graph $G$ is a cycle on five vertices.

For a set family $\cF$ and $j \in [n]$, we write $\cF(j) = \{F \in \cF : j \in F\}$ for the subfamily consisting of sets containing $j$.  We will sometimes wish to restrict our families to subsets $X \subset [n]$ of the ground set, and shall write $\card[X]{F} = \card{F \cap X}$.  Finally, all logarithms are in base two.

\section{Almost disjoint families} \label{sec:almostdisjoint}

In this section we set $\lambda = 0$ and determine the largest possible size of a $k$-almost disjoint family over $[n]$.  We shall prove the tight bound in Theorem \ref{thm:almostdisjoint}, which we recall below.

\setcounter{section}{1}
\setcounter{thm}{3}

\begin{thm}
For any positive integers $n$ and $k$, we have
\[ f(n,k,0) \le \frac{n}{k} \floor{ \frac{k^2}{4}} + n + 1. \]
\end{thm}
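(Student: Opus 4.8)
The plan is to pass to the auxiliary graph $G = G(\cF)$, whose maximum degree is at most $k$, and to exploit the fact that for each ground-set element $j$ the subfamily $\cF(j)$ of sets containing $j$ is a clique in $G$. Writing $d_j = \card{\cF(j)}$, the clique $\cF(j)$ has $d_j$ vertices, so $\Delta(G) \le k$ immediately gives $d_j \le k+1$ for every $j$. First I would dispose of the empty set, which is an isolated vertex of $G$ and accounts for the additive $+1$; from then on every $F \in \cF$ is nonempty. I would also decompose $G$ into connected components: since two sets lying in different components are disjoint, the components have pairwise disjoint supports, so if component $C$ uses $n_C$ ground-set elements then $\sum_C n_C \le n$. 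It therefore suffices to prove a per-component estimate of the shape $m_C \le n_C + \frac{n_C}{k}\floor{k^2/4}$ and sum it, the additive $n_C$ terms combining to the $+n$ in the statement.

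The heart of the argument is a double count tying the element-degrees $d_j$ to the graph-degrees. The identity $\sum_j d_j(d_j - 1) = \sum_{\{F,F'\}} 2\card{F \cap F'} \ge \sum_F \deg_G(F)$, together with $\deg_G(F) \le k$, controls the $d_j$; conversely, each neighbour of a set $F$ is witnessed by a common element, so the degree of $F$ splits as a sum of contributions $d_j - 1$ over the elements $j \in F$. The product $\floor{k^2/4} = \floor{k/2}\ceil{k/2}$ then enters through the elementary optimisation $ab \le \floor{k^2/4}$ whenever $a + b \le k$: the extremal configuration has each set meeting the rest of the family through two balanced ``heavy'' elements of degree about $\tfrac{k}{2}+1$, which is exactly the line graph of a $(\tfrac{k}{2}+1)$-regular graph appearing in the matching construction. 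Concretely, for a linear family (any two sets meeting in at most one point) one has $\sum_{j \in F}(d_j - 1) = \deg_G(F) \le k$ for every $F$; summing gives $\sum_j d_j^2 \le mk + \sum_j d_j$, and feeding this into a Cauchy--Schwarz (convexity) estimate against $\sum_j d_j$ yields the desired bound on $m_C$.

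The main obstacle is that the clean ``linear, size-two'' picture above is not forced: sets may share several common elements, in which case a single neighbour is double-counted across the elements of $F$ and the crucial equality $\deg_G(F) = \sum_{j \in F}(d_j - 1)$ degrades to an inequality pointing the wrong way, while large sets with many ``private'' elements (those lying in a single set) inflate $\sum_j d_j$ without contributing to any degree. I expect the bulk of the work to be showing that neither phenomenon can help the adversary: private elements are limited because there are at most $n$ of them in total, and each isolated or nearly-isolated set can be charged to the baseline disjoint family of size $\le n$ (Theorem~\ref{thm:fisher} with $\lambda = 0$), while multiple intersections only decrease the number of distinct sets, so that one may reduce to the linear case at no loss. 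Carrying out this reduction carefully, and thereby extracting the exact floor $\floor{k^2/4}$ and the precise lower-order constants rather than merely the asymptotic $nk/4$, is the delicate part; once the per-component inequality $m_C \le n_C + \frac{n_C}{k}\floor{k^2/4}$ is established, summing over components and restoring the empty set gives $f(n,k,0) \le \frac{n}{k}\floor{k^2/4} + n + 1$.
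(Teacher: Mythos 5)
Your plan correctly isolates the two obstacles, but both are left unresolved, and they are precisely where the theorem lives; moreover, the one step you do carry out in detail provably falls short of the stated bound. Consider your ``linear'' case and run the argument you describe: summing $\sum_{j \in F}(d_j-1) \le k$ over all sets gives $\sum_j d_j(d_j-1) \le km$; combining this with $m \le \tfrac12\left(\sum_j d_j + n\right)$ (at most $n$ singletons, all other sets of size at least $2$) and Cauchy--Schwarz $\sum_j d_j^2 \ge \left(\sum_j d_j\right)^2/n$ and solving the resulting quadratic yields only $m \le \tfrac{nk}{4} + \tfrac{3n}{2} - O(n/k)$, which exceeds the target $\tfrac{n}{k}\floor{k^2/4} + n + 1$ by roughly $n/2$. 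This loss is structural, not an artifact of sloppiness: in the extremal family (a $(k/2)$-regular graph plus all $n$ singletons) a singleton $\{v\}$ satisfies $d_v - 1 = k/2$, so summing the degree constraint over \emph{all} sets wastes $k/2$ per singleton, a total slack of $nk/2$ at the extremal configuration; hence no estimate derived from the summed inequality can be tight, and singletons must be handled separately. This is exactly why the paper's Proposition \ref{prop:degreesum} is formulated for the multigraph of non-singleton sets, with the edge-averaged condition $\sum_{u \sim v}(d_u+d_v) \le k e(\cG)$. In addition, for odd $k$ the extremal multigraphs are irregular (disjoint copies of $K_{(k-1)/2,(k+1)/2}$, degrees $(k\pm1)/2$), so a convexity estimate whose equality case is a regular degree sequence cannot produce the exact constant $\floor{k^2/4} = \tfrac{(k-1)(k+1)}{4}$; the paper instead smooths degrees by local edge moves and then runs a two-class count on $U = \{u : d_u = (k-1)/2\}$ and $V = \{v : d_v = (k+1)/2\}$, comparing $e(U)$, $e(V)$ and $e(U,V)$.

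The second gap is the reduction to the linear case. The sentence ``multiple intersections only decrease the number of distinct sets, so that one may reduce to the linear case at no loss'' is an assertion, not an argument: the constraint counts distinct intersecting sets, so shrinking, splitting or modifying sets can create new incidences and raise degrees elsewhere, and I see no local move that converts a $k$-almost disjoint family into a linear one of the same size. This reduction is where the paper does most of its work: an induction that removes every set $F$ of size at least $4$ (deleting $F$ and its at most $k$ neighbours loses at most $k+1$ sets but shrinks the ground set by at least $4$ elements, a net gain in the inequality); a compression making $\cF$ a down-set; the replacement of each $3$-set $T$ by a $2$-subset $T \setminus \{x\}$ chosen via an injection argument so that $\card{\cF_T(\{x\})} \ge \card{\cF_T(T \setminus \{x\})}$; and a final contradiction in the multigraph in which the repeated edges --- which exist precisely because the family is \emph{not} linear --- require a careful over/undercount of the sets collapsing onto a single edge. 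Your proposed ``charging'' of large sets and private elements to the Fisher baseline would need an induction of this kind to be made precise. One further caution: the per-component estimate you aim for, $m_C \le n_C + \tfrac{n_C}{k}\floor{k^2/4}$, carries no additive $+1$, so it is strictly stronger than the theorem applied to the component's support; it is true, but it cannot be quoted --- it must be proved, which returns you to the same difficulties. As it stands, your proposal is a plan whose two hardest steps are missing, and whose completed portion gives only an asymptotic $nk/4 + O(n)$ bound with the wrong lower-order term.
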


\setcounter{section}{2}
\setcounter{thm}{0}

We show this bound is tight whenever $k$ is even, $4 | kn$ and $n \ge k/2 + 2$, or $k$ is odd and $k | n$.  Construct a family $\cF$ by taking the edges of a graph $H$ with vertex set $[n]$, along with the empty set and all $n$ singletons in $[n]$.  If $k$ is even, we may take $H$ to be any $(k/2)$-regular graph, which exists whenever $kn/2$ is even and $k/2 \le n-1$.  If $k$ is odd, let $H$ be a disjoint union of $K_{(k-1)/2, (k+1)/2}$'s.  It is easy to verify that these families are $k$-almost disjoint with the desired number of sets.  We now prove Theorem \ref{thm:almostdisjoint}, showing that these constructions are the best possible.

\begin{proof}
We prove the theorem by induction on the number of sets of size at least $4$.  For the induction step, suppose we have a set $F \in \cF$ with $s = \card{F} \ge 4$.  Let $\cF_0 = \{ F' \in \cF : F' \cap F = \emptyset\}$ and $\cF_1 = \cF \setminus \cF_0$.  By assumption, $\card{\cF_1} \le k + 1$.  By induction, since $\cF_0$ is a $k$-almost disjoint system over $n - s$ elements with fewer sets of size at least $4$, $\card{\cF_0} \le \frac{n-s}{k} \floor{\frac{k^2}{4}} + n - s + 1$.  Thus we have
\begin{align*}
  \card{\cF} = \card{\cF_0} + \card{\cF_1} &\le \frac{n-s}{k} \floor{\frac{k^2}{4}} + n - s + 1 + k + 1 \\
  &\le \frac{n}{k} \floor{\frac{k^2}{4}} + n + 1 - s \left( \frac{k^2-1}{4k} + 1 \right) + k + 1 < \frac{n}{k} \floor{\frac{k^2}{4}} + n + 1,
\end{align*} 
since $s \ge 4$.

For the base case, we may assume that $\card{F} \le 3$ for all $F \in \cF$.  Replacing any $F \in \cF$ with a subset $F' \subset F$ not already in the family does not increase the number of intersections for any set.  Hence we may further assume $\cF$ is a down-set, with $F \in \cF$ and $F' \subset F$ implying $F' \in \cF$ as well.

Given any $3$-set $T = \{u,v,w\} \in \cF$ and any $S \subset T$, let $\cF_{T}(S) = \{ F \in \cF : F \cap T = S\}$.  Since $\cF$ is a down-set, we have $\card{\cF_{T}(S')} \ge \card{\cF_{T}(S)}$ for any $S' \subset S$, as we have the injection $\Phi: \cF_T(S) \rightarrow \cF_T(S')$ given by $\Phi(F) = F \setminus ( S \setminus S')$.  Hence we have $\card{\cF_T(\{u,v\})} \le \card{\cF_T(\{u\})}$, $\card{\cF_T(\{v,w\})} \le \card{\cF_T(\{v\})}$, and $\card{\cF_T(\{u,w\})} \le \card{\cF_T(\{w\})}$.  Summing and rearranging, we have
\[ \sum_{x \in T} \left[ \card{\cF_T(\{x\})} - \card{\cF_T(T \setminus \{x\})} \right] \ge 0, \]
and so there is some $x \in T$ with $\card{\cF_T(\{x\})} \ge \card{\cF_T(T \setminus \{x\})}$.

Form a multigraph $\cG$ on vertices $[n]$ from $\cF$ by removing the empty set and singletons, and replacing every $3$-set $T$ by the ($2$-)edge $T \setminus \{x\}$, where $x$ is as above.  Note that, since we assumed $\cF$ is a down-set, every replaced $3$-set gives rise to a repeated edge.  For a vertex $v$, let $d_v$ denote its degree (with multiplicity) in $\cG$.  We now apply the following result.

\begin{prop} \label{prop:degreesum}
Let $\cG$ be a multigraph on $n$ vertices.  If $e(\cG) > \frac{n}{k} \floor{\frac{k^2}{4}}$, then there is some edge $\{u,v\} \in E(G)$ with $d_u + d_v \ge k+1$.
\end{prop}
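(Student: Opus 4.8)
The plan is to prove the contrapositive: assuming that every edge $\{u,v\} \in E(\cG)$ satisfies $d_u + d_v \le k$, I would show $e(\cG) \le \frac{n}{k}\floor{k^2/4}$. The natural first attempt is the identity $\sum_v d_v^2 = \sum_{\{u,v\} \in E(\cG)}(d_u + d_v) \le k\, e(\cG)$ combined with Cauchy--Schwarz $\sum_v d_v^2 \ge (2 e(\cG))^2/n$, which yields $e(\cG) \le nk/4$. This matches the target when $k$ is even, since then $\floor{k^2/4} = k^2/4$, but it is too weak for odd $k$, where the floor saves an additive $n/(4k)$. The slack comes precisely from the integrality of the degrees, as the equality case $d_v \equiv k/2$ is unavailable when $k$ is odd; capturing this is the crux.

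To exploit integrality, I would instead weight each edge by the reciprocals of its endpoint degrees. The key counting identity is
\[ \sum_{\{u,v\} \in E(\cG)} \left( \frac{1}{d_u} + \frac{1}{d_v} \right) \;=\; \sum_{v \,:\, d_v > 0} 1 \;\le\; n, \]
since for each vertex $v$ the term $1/d_v$ is counted exactly $d_v$ times, once per incident edge (this remains valid for multigraphs, with $d_v$ counting multiplicities). It then remains to bound the left-hand side from below by $\frac{k}{\floor{k^2/4}}\, e(\cG)$, which is where the argument should become tight against the union-of-$K_{(k-1)/2,(k+1)/2}$ construction.

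The main step is therefore the elementary lemma that for positive integers $a,b$ with $a+b \le k$ one has $\frac1a + \frac1b \ge \frac{k}{\floor{k^2/4}}$. I would prove this by first fixing the sum $s = a+b$: writing $\frac1a+\frac1b = \frac{s}{ab}$ and noting that the product $ab$ of two positive integers of fixed sum $s$ is maximised when $\{a,b\} = \{\floor{s/2}, \ceil{s/2}\}$, we get $\frac1a+\frac1b \ge \frac{s}{\floor{s^2/4}}$. A short check then shows that $s \mapsto \frac{s}{\floor{s^2/4}}$ is non-increasing for integers $s \ge 2$, so its minimum over the admissible range $2 \le s \le k$ is attained at $s = k$, where it equals $\frac{k}{\floor{k^2/4}}$ by $\floor{k/2}\ceil{k/2} = \floor{k^2/4}$. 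Applying this lemma to each edge (both endpoints of an edge have degree at least one) and combining with the identity above gives $\frac{k}{\floor{k^2/4}}\, e(\cG) \le n$, i.e.\ $e(\cG) \le \frac{n}{k}\floor{k^2/4}$, the contrapositive of the proposition. The only delicate point is the integer optimisation in the lemma, and in particular the monotonicity of $\frac{s}{\floor{s^2/4}}$, which is exactly where the floor---and hence the improvement over the crude bound $nk/4$---enters.
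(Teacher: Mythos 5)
Your proof is correct, and it takes a genuinely different route from the paper's. You prove the contrapositive by a local double-counting of reciprocal degrees, via the identity $\sum_{\{u,v\}\in E(\cG)}\left(\tfrac{1}{d_u}+\tfrac{1}{d_v}\right)=\card{\{v : d_v>0\}}\le n$ (valid with multiplicities), combined with the elementary lemma that positive integers $a,b$ with $a+b=s\le k$ satisfy $\tfrac1a+\tfrac1b=\tfrac{s}{ab}\ge \tfrac{s}{\floor{s^2/4}}\ge \tfrac{k}{\floor{k^2/4}}$; all steps check out, including the monotonicity of $s\mapsto s/\floor{s^2/4}$ (which alternates between $4/s$ and $4s/(s^2-1)$), and the bound is tight against the disjoint union of $K_{(k-1)/2,(k+1)/2}$'s. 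The paper instead proves a stronger, averaged statement: if $\sum_{u\sim v}(d_u+d_v)\le k\,e(\cG)$ then $e(\cG)\le \frac{n}{k}\floor{\frac{k^2}{4}}$. Its proof is global rather than local: an edge-shifting argument reduces to the case where all degrees are $d$ or $d-1$, followed by a case analysis on $d$ versus $k/2$, with a separate bipartition count handling odd $k$. What each approach buys: yours is shorter, self-contained, and isolates exactly where integrality enters (the step $ab\le\floor{s^2/4}$); the paper's averaged version is robust to a few heavy edges, which your per-edge lemma cannot recover --- under only the averaged hypothesis, convexity yields no better than $e(\cG)\le nk/4$, since the real relaxation of your lemma has minimum $4/k$ at $a=b=k/2$. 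Since the proposition as stated, and as invoked in the proof of Theorem 1.4, needs only the edge-wise hypothesis, both arguments fully suffice.
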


We have $e(\cG) \ge \card{\cF} - n - 1$, and so if $\card{\cF} > \frac{n}{k} \floor{\frac{k^2}{4}} + n + 1$, the hypotheses of Proposition \ref{prop:degreesum} are satisfied, and thus we have some edge $\{u,v\} \in \cG$ with $d_u + d_v \ge k + 1$.

By inclusion-exclusion, the number of edges in $\cG$ intersecting $\{u,v\}$ is $d_u + d_v - d_{uv}$, where $d_{uv}$ is the multiplicity of the edge $\{u,v\}$ itself.  If $d_{uv} = 1$, then the edge $\{u,v\}$ intersects at least $k$ edges in $\cG$, each of which corresponds to a set in $\cF$.  This counts the set $\{u,v\}$ itself, but does not count the two singletons $\{u\}$ and $\{v\}$ (which belong to $\cF$ as $\cF$ is a down-set), and hence the set $\{u,v\}$ intersects at least $k+1$ other sets in $\cF$, a contradiction.

Hence we may assume $d_{uv} \ge 2$, which is only possible if some $3$-set $T = \{u,v,w\}$ in $\cF$ is mapped to the edge $\{u,v\}$ in $\cG$.  In this case, we consider the number of sets in $\cF$ that intersect $T$.  Each of the edges in $\cG$ intersecting $\{u,v\}$ corresponds to a different set in $\cF$ that intersects $T$.  There are at least $d_u + d_v - d_{uv} \ge k+1 - d_{uv}$ such sets.

Note that the sets in $\cF$ which reduce to $\{u,v\}$ in $\cG$ are $T$ itself and some of the sets in $\cF_T(\{u,v\})$.  On the other hand, the sets corresponding to those in $\cF_T(\{w\})$ do not intersect $\{u,v\}$ in $\cG$, and by construction, $\card{\cF_T(\{w\})} \ge \card{\cF_T(\{u,v\})}$.  Hence we overcount $d_{uv} \le \card{\cF_T(\{u,v\})} + 1$ sets, but undercount $\card{\cF_T(\{w\})}$ sets.  Accounting for the fact that we shouldn't count the $3$-set $T$ itself, but should count the three singletons $\{u\}$, $\{v\}$, and $\{w\}$, it follows that there are at least $k+2$ other sets in $\cF$ intersecting $T$, a contradiction.  This completes the proof.

\end{proof}

It remains to prove Proposition \ref{prop:degreesum}.

\begin{proof}[Proof of Proposition \ref{prop:degreesum}]

We will in fact prove that if $\cG$ is a multigraph with $\sum_{u \sim v} (d_u + d_v) \le ke(\cG)$, where the sum is taken with multiplicity over repeated edges, then $e(\cG) \le \frac{n}{k} \floor{\frac{k^2}{4}}$.  The claim then follows by averaging over the edges.

To begin, we show that we may assume all the degrees are nearly equal.  Indeed, suppose there are two vertices $u$ and $v$ with $d_v \ge d_u + 2$.  Let $w$ be a neighbour of $v$, and obtain a multigraph $\cG'$ by exchanging the edge $\{v,w\}$ with the edge $\{u,w\}$.  We have $d_v' = d_v - 1$, $d_u' = d_u + 1$, and $d_x' = d_x$ for all other vertices $x$.  Thus we have
\[ \sum_{x \sim y \in \cG'} (d_x' + d_y') - \sum_{x \sim y \in \cG} (d_x + d_y) = d_u + (d_u + 1) - (d_v - 1) - d_v =  2(d_u - d_v + 1) < 0,\]
and so $\sum_{x \sim y \in \cG'} d_x' + d_y' < k e(\cG')$.  Hence we may assume that all vertices in $\cG$ have degrees either $d$ or $d-1$, where $d$ is the maximum degree of $\cG$.

If $d \le k/2 \in \mathbb{Z}$, then we are done, as
\[ e(\cG) = \frac12 \sum_v d_v \le \frac{nd}{2} \le \frac{nk}{4} = \frac{n}{k} \floor{\frac{k^2}{4}}. \]

On the other hand, we must have $d \le (k+1)/2$.  Indeed, suppose $d \ge k/2+1$.  Then for every edge $u \sim v$, we have $d_u + d_v \in \{ 2d - 2, 2d-1, 2d \}$.  Since $2d - 2 \ge k$, and we must have some edges of weight at least $2d - 1 > k$ (since there is a vertex of degree $d$), we cannot have $\sum_{u \sim v} (d_u + d_v) \le k e(\cG)$.

Hence it only remains to consider the case $d = (k+1)/2$.  Let $U = \{u : d_u = (k-1)/2 \}$ and $V = \{v : d_v = (k+1)/2 \}$.  Note that edges within $U$ have weight $k-1$, edges between $U$ and $V$ have weight $k$, and edges within $V$ have weight $k+1$.  Since the average edge weight is at most $k$, we must have $e(U) \ge e(V)$.  Moreover, we have
\[ e(U,V) = \sum_{u \in U} d_u|_V = \sum_{u \in U} (d_u - d_u|_U) = \sum_{u \in U} d_u - 2e(U) = \frac{k-1}{2}\card{U} - 2e(U). \]
Similar calculations give $e(U,V) = \frac{k+1}{2} \card{V} - 2e(V)$.

Equating the two expressions, and noting that $\card{U} = n - \card{V}$, gives $\frac{k+1}{2} \card{V} = \frac{k-1}{2} \card{U} - 2e(U) + 2e(V) \le \frac{k-1}{2} (n - \card{V})$, and so $\card{V} \le \frac{n(k-1)}{2k}$.

Finally, we have
\[ e(\cG) = \frac12 \sum_x d_x = \frac12 \left[ \frac{k+1}{2} \card{V} + \frac{k-1}{2} (n - \card{V}) \right] = \frac12 \left[ \card{V} + \frac{n(k-1)}{2} \right] \le \frac{n}{k} \cdot \frac{k-1}{2} \cdot \frac{k+1}{2} = \frac{n}{k} \floor{ \frac{k^2}{4} }. \]
\end{proof}

\section{A sharp result when $k=1$} \label{sec:kequalsone}

Having resolved the case $\lambda = 0$, we now turn our attention to determining $f(n,1,\lambda)$ for arbitrary $\lambda$.

\subsection{Preliminaries} \label{sec:k1prelim}

Recall that Vu \cite{v99} proved $f(n,1,\lambda) \le 2n - 2$, with equality if and only if $\lambda = n/4$ and an $n \times n$ Hadamard matrix exists.  An $n \times n$ Hadamard matrix $H$ is an orthogonal $\{\pm 1\}$-matrix, where we may assume the last column has every entry equal to $1$.  For each of the other $n-1$ columns, define a set $F_j$ by taking $i \in F_j$ if and only if $H_{i,j} = 1$.  Add to these sets their complements, resulting in a family $\cF$ of $2n-2$ sets.  The orthogonality of the matrix $H$ ensures $\cF$ is a $1$-almost $(n/4)$-Fisher family.

We seek to sharpen this result by studying the dependence of $f(n,1,\lambda)$ on the parameter $\lambda$.  Our goal is the following theorem, which, modulo the existence of either near-optimal $\lambda$-Fisher families or appropriate Hadamard matrices, gives essentially tight bounds for all $\lambda$.  In particular, it shows that for $\lambda$ far from $n/4$, $1$-almost $\lambda$-Fisher families must have far fewer than $2n-2$ sets.

\setcounter{section}{1}
\setcounter{thm}{4}

\begin{thm}
For integers $n \ge 1$ and $\lambda \ge 0$, we have
\[ f(n,1,\lambda) \le \max \left\{ n+2, 8 \min \left\{ \lambda, \frac{n-\lambda}{3} \right\} + o ( \lambda ) \right\}. \]
\end{thm}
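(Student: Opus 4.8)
The plan is to combine the structure of the auxiliary graph with the rank bound on the intersection matrix, reducing the whole problem to controlling a single type of degenerate configuration. Since $k=1$, the graph $G(\cF)$ has maximum degree one, i.e.\ it is a matching, and so, reordering the sets so that matched pairs are consecutive, the intersection matrix $M = M(\cF)$ is block diagonal: each unmatched set $F$ gives a $1\times 1$ block $[\,\card{F}-\lambda\,]$, and each matched pair $\{F,F'\}$ gives a symmetric $2\times 2$ block with diagonal entries $\card{F}-\lambda$, $\card{F'}-\lambda$ and off-diagonal entry $\card{F\cap F'}-\lambda\neq 0$. First I would record that $\rank(M)\le n+1$ (as $M = A^TA-\lambda J$), and that $\rank(M)$ is the sum of the block ranks. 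A $1\times1$ block has rank $0$ exactly when $\card{F}=\lambda$, and a pair-block has rank $1$ exactly when its determinant vanishes, i.e.\ $(\card{F}-\lambda)(\card{F'}-\lambda)=(\card{F\cap F'}-\lambda)^2$. Writing $p_1$ for the number of rank-$1$ pairs and $q_0$ for the number of unmatched sets of size exactly $\lambda$, one gets $\card{\cF} = \rank(M) + p_1 + q_0 \le n+1+p_1+q_0$. A short argument shows $q_0\le 1$: two unmatched $\lambda$-sets $F,F'$ would satisfy $\card{F\cap F'}=\lambda=\card{F}=\card{F'}$ and hence be equal. Thus $\card{\cF}\le n+2+p_1$, and everything reduces to bounding the number of rank-$1$ pairs.

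Next I would extract the arithmetic constraints forced on a single rank-$1$ pair $\{F,F'\}$. Writing $a=\card{F}-\lambda$, $b=\card{F'}-\lambda$ and $d=\lambda-\card{F\cap F'}$, the determinant condition is $ab=d^2$, and a brief case check (ruling out the cases $a,b\le 0$, which would force $F\subseteq F'$ or $F'\subseteq F$) shows $a,b\ge 1$ and $d\ge 1$. Now two inequalities pin down the scale of $d$: from $\card{F\cap F'}\ge 0$ we get $d\le\lambda$, while from $\card{F\cup F'}=\lambda+a+b+d\le n$ together with $a+b\ge 2\sqrt{ab}=2d$ we get $3d\le a+b+d\le n-\lambda$, hence $d\le(n-\lambda)/3$. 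Therefore every rank-$1$ pair has $1\le d\le D:=\min\{\lambda,(n-\lambda)/3\}$, and in each pair at least one set has size at most $\lambda+d\le\lambda+D$.

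The heart of the argument is to show that these pairs cannot proliferate. The key observation is that distinct pairs are non-adjacent in $G$, so any two sets drawn from different pairs intersect in exactly $\lambda$; choosing one set per pair already yields a genuine $\lambda$-Fisher family and hence $p_1\le n$. To do better I would exploit the kernel of $M$: the rank-$1$ block of a pair $\{F,F'\}$ contributes the kernel vector whose image under $A$ is $u=\sqrt{b}\,v_F+\sqrt{a}\,v_{F'}$, and a direct computation gives $A^Tu=\lambda(\sqrt a+\sqrt b)\mathbf 1$. When $M$ is close to its maximal rank the characteristic vectors span $\RR^n$, so $A^T$ is injective and all these vectors $u$ are forced to be parallel; being nonnegative with support $F\cup F'$, this means all rank-$1$ pairs share a common union $W$, with $a+b+d$ equal to the fixed value $\card{W}-\lambda$ across pairs. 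Within this common universe I would play the size constraints against the exact-$\lambda$ cross intersections: for small $d$ the pairs are forced to be unbalanced, with one set nearly filling $W$, and two such large sets from different pairs cannot meet in only $\lambda$ elements; balancing this collision against the admissible range $1\le d\le D$ caps the number of pairs at roughly $4D$. Carrying the bookkeeping through yields $\card{\cF}\le 8D+o(\lambda)$ whenever the pairs proliferate, and $\card{\cF}\le n+2$ otherwise, which is exactly the claimed maximum.

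The main obstacle is this last step: quantifying precisely how many rank-$1$ pairs can coexist in the common universe $W$. One must simultaneously juggle the product relation $ab=d^2$, the two-sided bound $d\le\min\{\lambda,(n-\lambda)/3\}$, and the constraint that the large members of different pairs overlap in exactly $\lambda$ elements, and it is here that the constant $8$ and the two-sided minimum are produced. The residual $o(\lambda)$ term appears unavoidable in this scheme, since the extremal count is governed by the maximum size of auxiliary $\lambda$-Fisher families built from sets of size $O(\lambda)$ (near-pencils and Hadamard-type systems), whose exact sizes are only known asymptotically; this is precisely the ``modulo near-optimal $\lambda$-Fisher families'' caveat flagged before the theorem.
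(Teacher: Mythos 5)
Your opening reduction is sound and matches the paper's first step: the matching structure of $G(\cF)$, the block-diagonal form of $M$, the bound $\rank(M)\le n+1$, the determinant condition for rank-$1$ pairs, the count $\card{\cF}\le n+2+p_1$, and the per-pair bound $d\le\min\{\lambda,(n-\lambda)/3\}=:D$ are all correct. The genuine gap is in what you call the heart of the argument, and it sits exactly where the theorem's content lies. Capping the number of rank-$1$ pairs at roughly $4D$ does \emph{not} give the stated bound: fed into your reduction it yields only $\card{\cF}\le n+2+4D$, which is strictly weaker than $\max\left\{n+2,\,8D+o(\lambda)\right\}$. For instance, when $\lambda=n/100$ the theorem asserts $\card{\cF}\le n+2$ (essentially recovering Fisher's inequality, which is the main point for small $\lambda$), while your route gives $n+2+\Theta(n)$. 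To prove the theorem one must show that as soon as there are at least two rank-$1$ pairs, the \emph{entire} family --- including the unmatched sets and the rank-$2$ pairs, which your sketch never addresses in this regime --- has size at most $8D+o(\lambda)$. In the paper this is the content of Bounds II and III: one forms the subfamily $\cG$ consisting of one set from each edge \emph{together with all isolated vertices}, proves the symmetric-difference identity over the common union $U$, and applies the Plotkin bound, first with a popular element of $U$ (giving $\card{\cG}\le 8\mu$, so $m\le 16\mu$) and then, to remove the factor $2$, via a randomized complementation inside $U$ whose error terms --- coming from isolated sets that meet $W=[n]\setminus(U\cup V)$ or miss part of $V$ --- must be controlled by a separate lemma. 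That coding-theoretic machinery, not ``bookkeeping'' on the pairs, is what produces the constant $8$ and the $o(\lambda)$; your proposal contains no substitute for it.

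Two further points. First, your derivation of the common union is justified by ``when $M$ is close to its maximal rank the characteristic vectors span $\RR^n$,'' but the case you must handle is precisely when $p_1$ is large, where $\rank(M)=m-p_1-q_0$ can be far below $n+1$, so the spanning hypothesis is unavailable. The conclusion is salvageable with no rank assumption at all: if $Mw_i=0$ for the kernel vector $w_i$ of a rank-$1$ pair, then $A^TAw_i=\lambda c_i\mathbf{1}$ with $c_i=\sqrt{a_i}+\sqrt{b_i}$, so $z=w_i/c_i-w_j/c_j$ satisfies $A^TAz=0$, hence $\Vert Az\Vert^2=z^TA^TAz=0$ and $Az=0$, which is the desired parallelism. (The paper instead obtains the structure combinatorially, from the pair minimising $\card{F\cap F'}$ and the forced equality in the determinant condition.) Second, your structural picture is wrong in a telling way: once two pairs exist, parallelism forces every rank-$1$ pair to share both the core $V$ and the union, hence to have the same $d=\mu$ and to be perfectly \emph{balanced}, $a=b=d$, with each pair splitting $U$, $\card{U}=4\mu$, into equal halves; there are no ``unbalanced pairs with one set nearly filling $W$.'' Relatedly, the $o(\lambda)$ term does not stem from uncertainty about the sizes of extremal $\lambda$-Fisher families (that caveat in the paper concerns the matching lower-bound constructions); in the upper-bound proof it arises from the error analysis of the isolated sets just described.
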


\setcounter{section}{3}
\setcounter{thm}{0}

To see that these bounds are essentially best possible, observe that when $\lambda < n/8$ or $\lambda > 5n/8$, this upper bound reduces to $f(n,1,\lambda) \le n + o(n)$, and there might already be Fisher families of this size.  If $n/8 \le \lambda \le n/4$, we have the upper bound $f(n,1,\lambda) \le 8 \lambda + o(\lambda)$.  If a $(4 \lambda) \times (4 \lambda)$ Hadamard matrix exists, then we can use the Hadamard construction to obtain a $1$-almost $\lambda$-Fisher with $8\lambda - 2$ sets over $[4 \lambda] \subset [n]$.  Finally, for $n/4 \le \lambda \le 5n/8$, the bound is $8 (n- \lambda) /3 + o(\lambda)$.  This bound can be achieved by taking the Hadamard construction for a $1$-almost $( (n-\lambda)/3 )$-Fisher family over $[4(n- \lambda)/3]$, and then adjoining the $\lambda - (n - \lambda)/3$ elements from $[n]\setminus [4(n-\lambda)/3]$ to each set to make the resulting family $1$-almost $\lambda$-Fisher.

To prove this theorem, we shall require two lemmas.  The first, relating the ranks of the incidence and intersection matrices, appears as Lemma 4.2 in \cite{v99}.  Recall that for a family $\cF = \{ F_1, F_2, \hdots, F_m \}$ over $[n]$, the \emph{incidence matrix} $A = A(\cF)$ is an $n \times m$ $\{0,1\}$-matrix whose columns are the characteristic vectors $\{v_1, v_2, \hdots, v_m\}$ of the sets of $\mathcal{F}$.  The \emph{intersection matrix} $M = M(\cF)$ is an $m \times m$ matrix given by $M = A^T A - \lambda J_m$, where $J_m$ is the $m \times m$ matrix with every entry equal to $1$.  Note that $M_{i,j} = \card{F_i \cap F_j} - \lambda$ for every $i,j\in [m]$.

\begin{lemma} \label{lemma:vurank}
For any set family $\cF$, $\rank (A) - 1 \le \rank (M) \le \rank (A) + 1$.  We have equality in the second inequality only if the all-$1$ vector is spanned by the columns of $M$.
\end{lemma}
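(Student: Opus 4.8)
The plan is to view $M$ as a rank-one perturbation of $A^T A$ and combine two standard facts: that $\rank(A^T A) = \rank(A)$ over $\RR$, and that adding a rank-one matrix changes the rank by at most one. Throughout, write $\mathbf{1} \in \RR^m$ for the all-ones column vector, so that $J_m = \mathbf{1}\mathbf{1}^T$ and hence $M = A^T A - \lambda \mathbf{1}\mathbf{1}^T$.

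First I would establish $\rank(A^T A) = \rank(A)$. Working over $\RR$, if $A^T A x = 0$ then $\| A x \|^2 = x^T A^T A x = 0$, forcing $Ax = 0$; the reverse implication is immediate, so $\ker(A^T A) = \ker(A)$ and the rank identity follows from rank--nullity. This is the one place where it genuinely matters that we work over the reals. For the two-sided inequality, note that $\lambda \mathbf{1}\mathbf{1}^T$ has rank at most one, so subadditivity of rank gives $\rank(M) \le \rank(A^T A) + 1 = \rank(A) + 1$. Writing $A^T A = M + \lambda \mathbf{1}\mathbf{1}^T$ and applying the same bound in reverse yields $\rank(A) = \rank(A^T A) \le \rank(M) + 1$, that is, $\rank(M) \ge \rank(A) - 1$. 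This settles both inequalities.

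The only part requiring care is the equality characterization, for which I would pass to column spaces. Since every entry of $\mathbf{1}\mathbf{1}^T$ equals $1$, each column of $M$ is the corresponding column of $A^T A$ minus $\lambda \mathbf{1}$, so
\[ \operatorname{Col}(M) \subseteq \operatorname{Col}(A^T A) + \operatorname{span}(\mathbf{1}), \]
and the right-hand side has dimension at most $\rank(A^T A) + 1 = \rank(A) + 1$. Now suppose $\rank(M) = \rank(A) + 1$. Then the chain $\dim \operatorname{Col}(M) \le \dim\bigl(\operatorname{Col}(A^T A) + \operatorname{span}(\mathbf{1})\bigr) \le \rank(A) + 1$ must consist entirely of equalities; in particular $\operatorname{Col}(M) = \operatorname{Col}(A^T A) + \operatorname{span}(\mathbf{1})$, a space that visibly contains $\mathbf{1}$. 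Hence the all-ones vector lies in the column space of $M$, as required.

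I expect this final column-space bookkeeping to be the main (if modest) obstacle: the two-sided bound is an immediate consequence of rank subadditivity together with $\rank(A^TA)=\rank(A)$, whereas the equality clause needs the slightly sharper observation that the columns of $M$ cannot escape the $(\rank(A)+1)$-dimensional space $\operatorname{Col}(A^T A) + \operatorname{span}(\mathbf{1})$, so a rank increase can only be realised by that span being attained in full.
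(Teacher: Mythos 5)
Your proof is correct. For the two-sided inequality your argument coincides with the paper's: both rest on the identity $\rank(A^TA) = \rank(A)$ over $\RR$ (which you prove via the kernel argument) together with rank subadditivity applied in both directions to the rank-one perturbation $M = A^TA - \lambda J_m$; the paper dispatches this in one sentence. The genuine difference is in the equality clause: the paper does not prove it at all, deferring to Vu's original paper \cite{v99}, whereas you supply a self-contained and correct argument. Your key observation --- that each column of $M$ equals the corresponding column of $A^TA$ minus $\lambda \mathbf{1}$, so $\operatorname{Col}(M) \subseteq \operatorname{Col}(A^TA) + \operatorname{span}(\mathbf{1})$, a space of dimension at most $\rank(A)+1$ --- does exactly what is needed: if $\rank(M) = \rank(A)+1$, the containment is forced to be an equality of subspaces, and the right-hand space visibly contains $\mathbf{1}$, so $\mathbf{1} \in \operatorname{Col}(M)$. (When $\lambda = 0$ the equality case never occurs, since then $M = A^TA$, so the claim is vacuous there and your argument needs no separate case.) What your route buys is that the lemma becomes fully self-contained rather than resting on an external reference, at the modest cost of the column-space bookkeeping you identify as the main step.
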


\begin{proof}
The inequalities follow immediately from the subadditivity of the ranks of matrices and the fact that $\rank(A^T A) = \rank(A)$.  The necessary condition is proven in \cite{v99}.
\end{proof}

The second result is the Plotkin bound \cite{p60} from coding theory, bounding the number of codewords in a binary code with large average distance.  We express it below in the language of sets.

\begin{lemma}\label{lemma:distances}
Suppose we have a sequence of $m$ subsets $F_1, F_2, \hdots, F_m$ of $[n]$ such that $\sum_{i < j} \card{F_i \Delta F_j} \ge \binom{m}{2} \left( \frac{n}{2} + \delta \right)$ for some $\delta>0$. Then $m \le \frac{n}{2 \delta} + 1$.
\end{lemma}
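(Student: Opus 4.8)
The plan is to prove this by the classical double-counting argument underlying the Plotkin bound, passing from a sum over pairs of sets to a sum over the ground set $[n]$. For each element $l \in [n]$, let $a_l$ denote the number of indices $i$ with $l \in F_i$, so that $m - a_l$ counts the sets avoiding $l$. The key observation is that a fixed coordinate $l$ contributes exactly $1$ to $\card{F_i \Delta F_j}$ precisely when exactly one of $F_i, F_j$ contains $l$, and there are $a_l(m - a_l)$ such pairs. Summing over all coordinates yields the identity
\[ \sum_{i<j} \card{F_i \Delta F_j} = \sum_{l=1}^n a_l (m - a_l). \]

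Next I would bound each summand from above. Since the integers $a_l$ and $m - a_l$ are non-negative with fixed sum $m$, the product satisfies $a_l(m - a_l) \le m^2/4$ for every $l$, the maximum being attained near $a_l = m/2$. Hence the right-hand side above is at most $nm^2/4$, and combining this with the hypothesized lower bound gives
\[ \binom{m}{2}\left(\frac{n}{2} + \delta\right) \le \sum_{i<j} \card{F_i \Delta F_j} \le \frac{n m^2}{4}. \]

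It then remains only to rearrange. Writing $\binom{m}{2} = m(m-1)/2$ and dividing through by $m/2$ (the case $m \le 1$ being trivial) reduces the inequality to $(m-1)(\tfrac{n}{2} + \delta) \le nm/2$; cancelling the common term $(m-1)n/2$ against $nm/2$ leaves $(m-1)\delta \le n/2$, whence $m \le n/(2\delta) + 1$, as claimed. The argument is entirely elementary, so I do not anticipate any real obstacle: the only step requiring care is the contribution identity, which follows immediately by counting the coordinates in which the characteristic vectors $v_i$ and $v_j$ differ. Note also that the assumption $\delta > 0$ is precisely what permits the final division by $\delta$ and guarantees a finite bound on $m$.
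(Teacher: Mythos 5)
Your proof is correct and follows essentially the same argument as the paper: the double-counting identity $\sum_{i<j} \card{F_i \Delta F_j} = \sum_{l} a_l(m-a_l)$ (the paper writes $a_l$ as $\card{\cF(l)}$), the bound $a_l(m-a_l) \le m^2/4$ on each summand, and the same algebraic rearrangement. The only difference is that you spell out the final "solving for $m$" step explicitly, which the paper leaves to the reader.
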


\begin{proof}
By double-counting, $\sum_{i < j} \card{F_i \Delta F_j} = \sum_{x \in [n]} \card{\cF(x)} \card{\cF \setminus \cF(x)}$.  Each summand on the right-hand side is at most $\frac{m^2}{4}$, and hence $\binom{m}{2} \left( \frac{n}{2} + \delta \right) \le \frac{nm^2}{4}$.  Solving for $m$ gives the desired inequality.
\end{proof}

Armed with these lemmas, we may proceed to prove Theorem \ref{thm:kequalsone}.

\subsection{Proof of Theorem \ref{thm:kequalsone}} \label{subsec:proofforkequals1}

Suppose $\cF$ is $1$-almost $\lambda$-Fisher.  We wish to show
\[ \card{\cF} \le \max \left\{ n + 2, 8 \min \left\{ \lambda, \frac{n - \lambda}{3} \right\} + o ( \lambda ) \right\}. \]

First note that if $\lambda = 0$, then by Theorem \ref{thm:almostdisjoint} we have $\card{\cF} \le n + 1$, and so we may assume $\lambda \ge 1$.  Now suppose there is some $F \in \cF$ with $\card{F} = \lambda$.  Since $\cF$ is $1$-almost $\lambda$-Fisher, there may be some other set $F'$ such that $\card{F \cap F'} \neq \lambda$.  However, every set in $\cF \setminus \{F'\}$ must intersect $F$ in $\lambda$ elements, and must therefore contain $F$.  As these sets all intersect in the $\lambda$ elements of $F$, it follows that the restriction of $\cF \setminus \{F'\}$ to the universe $[n]\setminus F$ is a $1$-almost disjoint family over the $n - \lambda$ elements in $[n] \setminus F$.  Applying Theorem \ref{thm:almostdisjoint}, it follows that $\card{\cF \setminus \{F'\}} \le n - \lambda + 1 \le n$, and so $\card{\cF} \le n + 1$, as required.  Thus we may further assume $\card{F} > \lambda$ for all $F \in \cF$.

Next we consider the structure of the auxiliary graph $G = G(\cF)$.  Since $\cF$ is $1$-almost $\lambda$-Fisher, $G$ has maximum degree $1$.  Hence the connected components of $G$ are either isolated edges, which we denote as copies of $P_1$, or isolated vertices.  Let $\cF' = \{F_1, F_2\} \subset \cF$ be an edge in $G$.  We then have 
\[ M(\cF') = \begin{pmatrix}
	\card{F_1} - \lambda & \card{F_1 \cap F_2} - \lambda \\
	\card{F_1 \cap F_2} - \lambda & \card{F_2} - \lambda
\end{pmatrix}. \]
Since $\card{F_1} - \lambda > 0$, it follows that $\rank( M(\cF')) \ge 1$.  Moreover, by considering the determinant of this matrix, we have $\rank(M(\cF')) = 2$ unless
\begin{equation} \label{eqn:determinant}
 \left( \card{F_1} - \lambda \right) \left( \card{F_2} - \lambda \right) = \left( \card{F_1 \cap F_2} - \lambda \right)^2,
\end{equation}
in which case $\rank(M(\cF')) = 1$. Since $\card{F_1 \cap F_2} < \max \{ \card{F_1}, \card{F_2} \}$, equality requires $\card{F_1 \cap F_2} < \lambda$.

We partition $\cF$ according to the connected components of $G$.  We may write $\cF = \cup_i \cF_i$, where $\cF_i$ is a rank-$1$ $P_1$ for $1 \le i \le r$, a rank-$2$ $P_1$ for $r + 1 \le i \le r + s$, and an isolated vertex for $r + s + 1 \le i \le r + s + t$.  The total number of sets is then given by $m = \card{\cF} = 2r + 2s + t$.

We shall prove the desired bound on $m$ through three intermediate bounds on subfamilies of $\cF$. \\

\noindent \underline{Bound I:} $m \le n + r + 1$.

\begin{proof}
Since $A = A(\cF)$ is an $n \times m$ matrix, we have $\rank(A) \le n$.  By Lemma \ref{lemma:vurank}, it follows that $\rank(M) \le \rank(A) + 1 \le n + 1$.  However, $M = M(\cF)$ is a block-diagonal matrix, with submatrices $M_i = M(\cF_i)$, $1 \le i \le r + s + t$, on the diagonal.  By our choice of partition, we have
\[ \rank(M) = \sum_i \rank(M_i) = \sum_{i=1}^r \rank(M_i) + \sum_{i = r+1}^{r+s} \rank(M_i) + \sum_{i = r+s+1}^{r+s+t} \rank(M_i) = r + 2s + t. \]
Thus $m = 2r + 2s + t = \rank(M) + r \le n + r + 1$, as claimed.
\end{proof}

Hence if $r \le 1$ we have $\card{\cF} \le n + 2$, giving the desired upper bound on the size of the family.  Thus we may assume that $\cF$ has at least two rank-$1$ $P_1$'s.  We now apply a very different argument.  We label the sets in the connected components as $\cF_i = \{ F_{i,0}, F_{i,1} \}$ for $1 \le i \le r + s$, and $\cF_i = \{F_i\}$ for $r + s + 1 \le i \le r + s + t$.

First note that we can deduce a large amount of structural information about the sets in the rank-$1$ pairs.  Let $\nu = \min_{1 \le i \le r} \card{F_{i,0} \cap F_{i,1}} < \lambda$ and $\mu = \lambda - \nu$. Without loss of generality, suppose this minimum is attained at $i = 1$, and let $V = F_{1,0} \cap F_{1,1}$ and $U=(F_{1,0}\cup F_{1,1})\setminus V$.  Consider any other set $F \in \cF$.  We have $\card{F} \ge \card{F \cap \left( F_{1,0} \cup F_{1,1} \right)} = \card{F \cap F_{1,0}} + \card{F \cap F_{1,1}} - \card{F \cap V} \ge \lambda + \lambda - \nu = 2 \lambda - \nu$.  As this holds for $F_{i,0}$ and $F_{i,1}$ for any $2 \le i \le r$, we have
\[ \left( \card{F_{i,0}} - \lambda \right) \left( \card{F_{i,1}} - \lambda \right) \ge \left( \lambda - \nu \right)^2 \ge \left( \lambda - \card{F_{i,0} \cap F_{i,1}} \right)^2, \]
since $\card{F_{i,0} \cap F_{i,1}} < \lambda$ for every $i \le r$, and $\nu$ was chosen to minimise $\card{F_{i,0} \cap F_{i,1}}$.  In order to have equality, which we require in \eqref{eqn:determinant}, we must have $V \subset F_{i,j}$ and $F_{i,j} \subset F_{1,0} \cup F_{1,1}$ for all $i \le r$ and $j \in [2]$.  Moreover, we must have $F_{i,0} \cap F_{i,1} = V$ and, by symmetry, $(F_{i,0}\cup F_{i,1})\setminus V=U$.  Hence it follows that there are disjoint sets $V$ of size $\nu$ and $U$ of size $4 \mu$ such that all sets $F_{i,j}$ in the rank-$1$ edges contain $V$, and, outside this common core, the pairs $\cF_i$ each partition $U$ into two equal parts.  We let $W = [n] \setminus (U \cup V)$ be those elements not covered by sets in rank-$1$ edges, and let $\gamma = \card{W} = n - 4\mu - \nu$ denote its size.  Using this structure, we shall first deduce a preliminary bound on the size of $\cF$. \\

\noindent \underline{Bound II:} $m \le 16 \mu$.

\begin{proof}

We study the subfamily $\cG = \{F_{i,0} : 1 \le i \le r + s \} \cup \{ F_i : r + s + 1 \le i \le r + s + t \}$.  Note that we are only taking one set from each edge, and so all pairwise intersections in $\cG$ have size $\lambda$.

We shall consider the symmetric differences between sets of $\cG$ over the ground set $U$.  For any two sets $F, F' \in \cG$, we will show
\begin{equation} \label{eqn:difference}
\card[U]{F \Delta F'} = 2 \mu + 2 \card[V]{\comp{F} \cap \comp{F'}} + 2 \card[W]{F \cap F'},
\end{equation}
where $\comp{F}$ denotes the complement of $F$.

Start with the observation that $\card[U]{F \Delta F'} = \card[U]{F} + \card[U]{F'} - 2 \card[U]{F \cap F'}$.  There must be some $1 \le i \le r$ such that $F \notin \{F_{i,0}, F_{i,1}\}$, and so $\card{F \cap F_{i,0}} = \lambda = \card{F \cap F_{i,1}}$.  Since $V = F_{i,0} \cap F_{i,1}$ and $F_{i,0} \setminus V$ and $F_{i,1} \setminus V$ partition $U$, we obtain $\card[U]{F} = 2 \lambda - 2 \card[V]{F}$.  Similarly, $\card[U]{F'} = 2 \lambda - 2 \card[V]{F'}$.  Hence $\card[U]{F \Delta F'} = 4 \lambda - 2 \left( \card[V]{F} + \card[V]{F'} \right) - 2 \card[U]{F \cap F'}$.  For future reference, note that $\card[U]{F} = 2 \lambda - 2 \card[V]{F} \ge 2 \lambda - 2 \nu = 2 \mu$.

Now $\card[V]{F} + \card[V]{F'} = \card[V]{F \cup F'} + \card[V]{F \cap F'} = \nu - \card[V]{\comp{F} \cap \comp{F'}} + \card[V]{F \cap F'}$.  Moreover, since $\cG$ is a $\lambda$-Fisher family, we must have $\card{F \cap F'} = \card[U]{F \cap F'} + \card[V]{F \cap F'} + \card[W]{F \cap F'} = \lambda$.  Putting this all together, we have the claimed equality,
\[ \card[U]{F \Delta F'} = 2 \left( \lambda - \nu \right) + 2 \card[V]{\comp{F} \cap \comp{F'}} + 2 \left( \lambda - \card[V]{F \cap F'} - \card[U]{F \cap F'} \right) = 2 \mu + 2 \card[V]{\comp{F} \cap \comp{F'}} + 2 \card[W]{F \cap F'}. \]

In particular, the differences between sets in $\cG$ over $U$ have size at least $2 \mu$.  Our universe $U$ has size $4 \mu$, and so this is not enough to apply Lemma \ref{lemma:distances}, as we would have $\delta = 0$.  However, since each set in $\cG$ contains at least $2 \mu$ elements of $U$, we can find some $u \in U$ contained in at least half the sets of $\cG$.  Considering the sets in $\cG(u)$ over the universe $U \setminus \{u\}$, we have at least $\card{\cG} / 2$ sets with pairwise distances at least $2 \mu$ over a universe of size $4 \mu - 1$.  Taking $\delta = 1/2$ in Lemma \ref{lemma:distances}, this gives $\card{\cG}/2 \le 4 \mu$, and so $\card{\cG} = r + s + t \le 8 \mu$.  Hence $\card{\cF} = 2r + 2s + t \le 2 \card{\cG} \le 16 \mu$, as claimed.
\end{proof}

If $\mu \le n / 16$, then by Bound II we have $m \le 16 \mu \le n$, which would give the desired bound on the size of $\cF$.  Hence we may now assume $\mu \ge n / 16$ and $\card{\cF} \ge n + 2 = 4 \mu + \nu + \gamma + 2 > 4 \mu + \nu$ . \\

\noindent \underline{Bound III:} $m \le 8 \mu + o(\mu)$.

\begin{proof}

To complete the proof, we will now remove the extra factor of two from Bound II.  We will also show that when $\cF$ only consists of rank-$1$ $P_1$'s, then we have the more precise bound $m \le 8 \mu$, a result that will be used in Lemma \ref{lemma:boundingthenumberoflowrankstructures}.

Let $\cG$ be as above.  In Bound II, we used \eqref{eqn:difference} to bound $\card[U]{F \Delta F'}$ from below by $2 \mu$.  We will now show that the two additional terms cannot contribute too much, as otherwise we will gain in the application of Lemma \ref{lemma:distances}.

We define some additional notation to keep track of the sets and pairs of sets that contribute to these additional terms.  Let $\cP^{(i)} = \left\{ \{F,F'\} \subset \cG : 2^i \le \card[V]{\comp{F} \cap \comp{F'}} < 2^{i+1} \right\}$ be those pairs that fail to cover between $2^i$ and $2^{i+1}$ elements of $V$, and let $\cR^{(i)} = \cup_{\{F,F'\} \in \cP^{(i)}} \{F,F'\}$ be the sets involved in such pairs.  Similarly, let $\cQ^{(i)} = \left\{ \{F,F'\} \subset \cG : 2^i \le \card[W]{F \cap F'} < 2^{i+1} \right\}$ be the pairs that intersect in between $2^i$ and $2^{i+1}$ elements in $W = [n] \setminus (U \cup V)$, and let $\cS^{(i)} = \cup_{\{F,F'\} \in \cQ^{(i)}} \{F,F'\}$ be the sets themselves.  We denote the sizes of these families by $p_i = \card{\cP^{(i)}}$, $q_i = \card{\cQ^{(i)}}$, $r_i = \card{\cR^{(i)}}$ and $s_i = \card{\cS^{(i)}}$.  The following lemma bounds these quantities.

\begin{lemma} \label{lem:addterms}
We must have $p_i \le \mu r_i 2^{-(i+1)}$, $r_i 2^i \le 4 \mu \sqrt{2 \nu}$, $q_i \le \mu s_i 2^{-(i+1)}$ and $s_i 2^i \le 4 \mu \sqrt{2 \gamma}$.
\end{lemma}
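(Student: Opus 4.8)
The four inequalities split into two symmetric pairs: the statements about $p_i,r_i$ concern the non-coverage $\card[V]{\comp F \cap \comp{F'}}$ of the core $V$, while those about $q_i,s_i$ concern the excess intersection $\card[W]{F \cap F'}$ on $W$. Since the roles of $V$ (read through complements) and $W$ (read directly) in \eqref{eqn:difference} are interchangeable, with $\nu = \card{V}$ playing the role of $\gamma = \card{W}$, I would prove the first two inequalities and obtain the last two by the identical argument. The common starting point is a single per-pair estimate extracted from \eqref{eqn:difference}: since $\card[U]{F \Delta F'} \le \card{U} = 4\mu$, every pair $\{F,F'\} \subseteq \cG$ satisfies
\[ \card[V]{\comp F \cap \comp{F'}} + \card[W]{F \cap F'} \le \mu. \]
In particular each of the two quantities is at most $\mu$ for every pair, so the dyadic index $i$ in the definitions of $\cP^{(i)}$ and $\cQ^{(i)}$ only ranges over $2^i \le \mu$; this cap of $\mu$ per pair is what ultimately produces the factor $\mu$ in all four bounds.

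For the inequality $p_i \le \mu r_i 2^{-(i+1)}$ I would argue by double counting the total non-coverage contributed by the pairs of $\cP^{(i)}$. Each such pair has $\card[V]{\comp F \cap \comp{F'}} \ge 2^i$, so $2^i p_i \le \sum_{\{F,F'\} \in \cP^{(i)}} \card[V]{\comp F \cap \comp{F'}}$. Rewriting this sum over the sets of $\cR^{(i)}$ and bounding the contribution attached to each fixed set by the per-pair cap of $\mu$ gives an upper bound of $\tfrac12 \mu r_i$, and dividing by $2^i$ yields the claim. The same computation on $W$, with $\card[W]{F \cap F'}$ in place of $\card[V]{\comp F \cap \comp{F'}}$, gives $q_i \le \mu s_i 2^{-(i+1)}$.

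The bounds $r_i 2^i \le 4\mu\sqrt{2\nu}$ and $s_i 2^i \le 4\mu\sqrt{2\gamma}$ are the heart of the lemma, and the square root signals a second-moment argument. For each $x \in V$ let $b_x$ be the number of sets of $\cR^{(i)}$ that avoid $x$. Then $\sum_{x \in V} b_x = \sum_{F \in \cR^{(i)}} \card[V]{\comp F} \ge r_i 2^i$, since every set of $\cR^{(i)}$ misses at least $2^i$ elements of $V$, while $\sum_{x \in V} b_x^2 = \sum_{F,F' \in \cR^{(i)}} \card[V]{\comp F \cap \comp{F'}}$ is controlled off the diagonal by the per-pair cap $\mu$. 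Feeding these two estimates into the Cauchy--Schwarz inequality $\sum_x b_x^2 \ge (\sum_x b_x)^2/\nu$ produces a quadratic inequality in $\sum_x b_x$ whose resolution gives $r_i 2^i = O(\mu\sqrt{\nu})$ once $2^i$ is not too small relative to $\sqrt{\mu\nu}$; the complementary range of small $2^i$ is disposed of by the crude bound $r_i \le \card{\cG}$ coming from the Plotkin-type estimate (Lemma \ref{lemma:distances}) already used in Bound II. The identical argument over $W$, counting sets of $\cS^{(i)}$ containing each $x \in W$ and using $\gamma$ in place of $\nu$, gives the bound on $s_i 2^i$.

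The main obstacle is precisely this last step. The Cauchy--Schwarz computation only controls $r_i$ when the sets of $\cR^{(i)}$ fail to cover a reasonably large portion of $V$ (large $2^i$), while the crude size bound is only efficient for small $2^i$; matching the two regimes so that they combine to the clean constant $4\mu\sqrt{2\nu}$ uniformly over all dyadic scales $i$ is the delicate part. Some care is also needed to justify that every pair of $\cR^{(i)}$ genuinely obeys the per-pair cap (they do, being pairs in $\cG$), so that the off-diagonal terms of $\sum_x b_x^2$ are uniformly bounded by $\mu$.
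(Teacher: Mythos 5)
Your setup is sound --- the per-pair cap $\card[V]{\comp{F}\cap\comp{F'}}+\card[W]{F\cap F'}\le\mu$ is correct, and the $V$/$W$ symmetry is exactly how the paper organises the proof --- but both of your key steps have genuine gaps, and both are repaired in the paper by the same missing tool: Lemma \ref{lemma:distances} applied over the universe $U$. For $p_i\le\mu r_i2^{-(i+1)}$, your double count bounds, for each fixed $F\in\cR^{(i)}$, the quantity $\sum_{F':\{F,F'\}\in\cP^{(i)}}\card[V]{\comp{F}\cap\comp{F'}}$ by $\mu$, calling this the per-pair cap. But the cap is per pair: a single set $F$ can lie in up to $r_i-1$ pairs of $\cP^{(i)}$, so this per-set sum can be as large as $(r_i-1)\mu$, and applying the cap termwise only yields the vacuous $2^ip_i\le\mu p_i$. (In fact the uniform per-set bound you want is essentially equivalent to the inequality being proved, so assuming it is circular.) The paper instead applies Lemma \ref{lemma:distances} to the sets of $\cR^{(i)}$ over $U$: by \eqref{eqn:difference} the sum of pairwise distances is at least $2\mu\binom{r_i}{2}+2p_i2^i$, while the Plotkin counting caps it near $\mu r_i^2$; comparing the two gives precisely $2p_i2^i\le\mu r_i$.

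For $r_i2^i\le4\mu\sqrt{2\nu}$ the gap is quantitative and fatal. Your Cauchy--Schwarz step is the paper's Jensen step in disguise, but you then bound the off-diagonal sum $\sum_{F\ne F'\in\cR^{(i)}}\card[V]{\comp{F}\cap\comp{F'}}$ from above by $r_i^2\mu$ using the per-pair cap. Feeding this into $\left(\sum_xb_x\right)^2/\nu\le\sum_xb_x^2$ gives $(r_i2^i)^2/\nu\le r_i^2\mu+r_i\nu$, i.e.\ $2^{2i}\le\mu\nu+\nu^2/r_i$: this bounds the \emph{scale} $2^i$ (or gives $r_i2^i\le2\nu$ when $2^i\ge2\sqrt{\mu\nu}$), but it cannot bound the product $r_i2^i$ at scales $2^i<2\sqrt{\mu\nu}$. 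There your fallback $r_i\le\card{\cG}\le8\mu$ only gives $r_i2^i\lesssim\mu\sqrt{\mu\nu}$, which overshoots the target $4\mu\sqrt{2\nu}$ by a factor of order $\sqrt{\mu}=\Omega(\sqrt{n})$; worse, since admissible scales satisfy $2^i\le\nu$, whenever $\mu>\nu$ \emph{every} scale lies in this bad range, so your Cauchy--Schwarz regime never applies at all. This is not a matter of carefully matching regimes: the per-pair cap is simply too weak an upper bound. The paper's resolution is structurally different: the Jensen lower bound $r_i^22^{2i-2}/\nu$ is not compared against any per-pair upper bound, but is interpreted as extra distance in \eqref{eqn:difference} summed over \emph{all} pairs of $\cG$, and fed as $\delta$ into Lemma \ref{lemma:distances} applied to the whole family $\cG$; this yields $\card{\cG}\ge r_i^22^{2i}/(2\mu\nu)$, which for $r_i2^i>4\mu\sqrt{2\nu}$ forces $\card{\cG}>16\mu$, contradicting Bound II. That global Plotkin-plus-Bound-II step is the idea missing from your proposal, and the same repair is needed for the $q_i$ and $s_i2^i$ bounds.
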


Given these bounds, it follows that the sets in $\cG$ are similar in structure to those from the Hadamard construction.  Inspired by that example, we introduce complements with respect to $U$ to remove the extra factor of two in Bound II.  Let $\cH = \{ F \cap U : F \in \cG \} \cup \{ \comp{F} \cap U : F \in \cG \}$, giving $2 \card{\cG}$ sets (counted with multiplicity, if needed).  

We choose an element $x \in U$ uniformly at random, and apply Lemma \ref{lemma:distances} to $\cH(x)$.  First note that since $\cH$ consists of pairs of sets and their complements, $\cH(x)$ contains one set from each pair, and so $\card{\cH(x)} = \card{\cG}$ for all $x\in U$.  Now consider the expected sum of differences between pairs of sets in $\cH(x)$.  A pair of sets $\{F, F' \} \subset \cG$ gives rise to four pairs of sets to consider in $\cH(x)$: $\{F,F'\}, \{F,\comp{F'}\}, \{\comp{F},F'\}$ and $\{\comp{F},\comp{F'}\}$ (note that a set and its complement cannot both be in $\cH(x)$, and hence contribute nothing to the expectation).  Any pair of sets $\{S,T\}$ is contained in $\cH(x)$ with probability $\card[U]{S \cap T}/4 \mu$, and has difference $\card[U]{S \Delta T}$, thus contributing $\card[U]{S \cap T} \card[U]{S \Delta T}/4 \mu$ to the expectation.

By summing the corresponding terms for the four pairs associated with $\{F, F' \} \subset \cG$
and using (\ref{eqn:difference}), we find that the contribution to the expectation is 
\begin{align*}
	\frac{1}{2 \mu} \card[U]{F \Delta F'} &\left( 4 \mu - \card[U] { F \Delta F'} \right) = \frac{1}{2 \mu} \left( 2 \mu + 2 \card[V]{\comp{F} \cap \comp{F'}} + 2 \card[W]{F \cap F'} \right) \left( 2 \mu - 2 \card[V]{\comp{F} \cap \comp{F'}} - 2 \card[W]{F \cap F'} \right) \\
	&= \frac{2}{\mu} \left( \mu^2 - \left( \card[V]{\comp{F} \cap \comp{F'}} + \card[W]{F \cap F'} \right)^2 \right) \ge \frac{2}{\mu} \left( \mu^2 - 2 \card[V]{\comp{F} \cap \comp{F'}}^2 - 2 \card[W]{F \cap F'}^2 \right).
\end{align*}

Hence, summing over all pairs, the expected sum of differences in $\cH(x)$ is at least
\begin{align*}
	\sum_{F, F' \in \cG} \frac{2}{\mu}& \left( \mu^2 - 2 \card[V]{\comp{F} \cap \comp{F'}}^2 - 2 \card[W]{F \cap F'}^2 \right) = 2 \mu \binom{\card{\cG}}{2} - \frac{4}{\mu} \sum_{F,F' \in \cG} \card[V]{\comp{F} \cap \comp{F'}}^2 - \frac{4}{\mu} \sum_{F,F' \in \cG} \card[W]{F \cap F'}^2 \\
	&\ge 2 \mu \binom{\card{\cG}}{2} - \frac{4}{\mu} \sum_{i = 0}^{\log \nu} p_i (2^{i+1})^2 - \frac{4}{\mu} \sum_{i = 0}^{\log \gamma} q_i (2^{i+1})^2 \\
	&\ge 2 \mu \binom{\card{\cG}}{2} - \frac{4}{\mu} \sum_{i = 0}^{\log \nu} \left( \mu r_i 2^{-(i+1)} \right) ( 2^{i+1} )^2 - \frac{4}{\mu} \sum_{i = 0}^{\log \gamma} ( \mu s_i 2^{-(i+1)} ) (2^{i+1})^2 \\
	&\ge 2 \mu \binom{\card{\cG}}{2} - 8 \sum_{i = 0}^{\log \nu} r_i 2^i - 8 \sum_{i = 0}^{\log \gamma} s_i 2^i \ge 2 \mu \binom{\card{\cG}}{2} - 64 \mu \nu^{1/2} \log \nu - 64 \mu \gamma^{1/2} \log \gamma,
\end{align*}
where in the third and fifth inequalities we used the bounds on $p_i, q_i, r_i$ and $s_i$ given by Lemma \ref{lem:addterms}.

There is thus some choice of $x \in U$ for which the sum of differences in $\cH(x)$ is at least this expectation.  Removing the common element $x$ from the $\card{\cG}$ sets in $\cH(x)$ does not affect the differences, and reduces the size of the universe to $4 \mu - 1$, and hence in the application of Lemma \ref{lemma:distances} we may take
\[ \delta \ge \frac12 - \frac{64 \mu \left( \nu^{1/2} \log \nu + \gamma^{1/2} \log \gamma \right)}{\binom{\card{\cG}}{2}} > \frac12 - \frac{32 \left( \nu^{1/2} \log \nu + \gamma^{1/2} \log \gamma \right)}{\mu}, \]
recalling $\card{\cG} \ge \frac12 \card{\cF} \ge \frac12 \left( n + 2 \right) \ge 2 \mu + 1$. As $\nu, \gamma \le n$ and $\mu \ge n/16$, this lower bound is $\frac12 - o(1)$.  Hence, provided $n$ is sufficiently large, Lemma \ref{lemma:distances} gives
\[ \card{\cG} \le \frac{4\mu - 1}{2\left( \frac12 - o(1) \right)} + 1 =4 \mu + o(\mu). \]
Thus, $\card{\cF} \le 2 \card{\cG} \le 8 \mu + o( \mu )$, as claimed.

We remark that careful calculation shows the $o(\mu)$ error term is in fact $O \left( \nu^{1/2} \log \nu + \gamma^{1/2} \log \gamma \right)$, which is at most $O \left( n^{1/2} \log n \right)$.  Moreover, if the original family of sets $\cF$ consists only of rank-$1$ $P_1$'s, then we do not require the error term at all.  Indeed, for sets in rank-$1$ $P_1$'s, we must have $V \subset F \subset U \cup V$, and hence there are no additional terms in \eqref{eqn:difference}.  Thus in this setting we have $p_i = q_i = r_i = s_i = 0$ for all $i$, resulting in the bound $m \le 8 \mu$.

\end{proof}

In order to maximise this bound, then, we seek to maximise $\mu$.  We have two constraints: $\mu \le \lambda$, and $4 \mu + \nu \le n$, where $\nu = \lambda - \mu$.  Together, this gives $\mu \le \min \left\{ \lambda, \frac{ n - \lambda }{3} \right\}$.  Hence, combining Bounds I and III, we find $\card{\cF} \le \max \left\{ n + 2 , 8 \min \left\{ \lambda, \frac{n - \lambda}{3} \right\} + o(\lambda) \right\}$.  As this holds for all $1$-almost $\lambda$-Fisher families, Theorem \ref{thm:kequalsone} follows, pending the proof of Lemma \ref{lem:addterms}.

\begin{proof}[Proof of Lemma \ref{lem:addterms}]
We first obtain the bound on $p_i$, showing the pairs in $\cP^{(i)}$ cannot be too dense with respect to $\cR^{(i)}$.  Considering the total distances over $U$ between pairs of sets in $\cR^{(i)}$, we have
\[ \sum_{F,F' \in \cR^{(i)}} \card[U]{F \Delta F'} = \sum_{F, F' \in \cR^{(i)}} \left[ 2 \mu + 2 \card[V]{\comp{F} \cap \comp{F'}} + 2 \card[W]{F \cap F'} \right] \ge 2 \mu \binom{r_i}{2} + 2 p_i 2^i, \]
using \eqref{eqn:difference} and noting that there are at least $p_i$ pairs for which we have $\card[V]{\comp{F} \cap \comp{F'}} \ge 2^i$.  If $p_i \ge 1$ then $r_i \ge 2$, and taking $\delta = 2 p_i 2^i / \binom{r_i}{2}$ in Lemma \ref{lemma:distances} gives $r_i \le 4 \mu / 2 \delta + 1$, which simplifies to $p_i \le \mu r_i 2^{-(i+1)}$.  Running the same argument with $\cQ^{(i)}$ and $\cS^{(i)}$ gives $q_i \le \mu s_i 2^{-(i+1)}$.

We now show that there cannot be too many sets in $\cR^{(i)}$, for otherwise these additional terms in \eqref{eqn:difference} would contribute too much when applying Lemma \ref{lemma:distances}.  Note that if $\nu = 0$, we must have $\cR^{(i)} = \emptyset$, and so we would be done.  Thus we may assume $\nu \ge 1$.  Since a set $F \in \cR^{(i)}$ comes from some pair $\{F,F'\} \in \cP^{(i)}$, we must have $\card[V]{\comp{F}} \ge 2^i$
and therefore $\sum_{F \in \cR^{(i)}} \card[V]{\comp{F}} \ge r_i2^i $.  On average a vertex $x \in V$ is contained in at least $r_i2^i/\nu$ sets $\comp{F}$, where $F \in \cR^{(i)}$. Hence, by a standard application of Jensen's inequality, we  have
\[ \sum_{F,F' \in \cR^{(i)}} \card[V]{\comp{F} \cap \comp{F'}} 
\ge \nu \binom{\frac{r_i 2^i}{\nu}}{2} = r_i 2^{i-1} \left( \frac{r_i 2^i}{\nu} - 1 \right). \]
If $r_i 2^i \le 2 \nu$, we certainly have $r_i 2^i \le 4 \mu \sqrt{2 \nu}$ (since $\nu \le n$, $\mu = \Omega(n)$, and $n$ is large).  Otherwise the above quantity is at least $r_i^2 2^{2i - 2} / \nu$.  This term is a lower bound on the sum of the additional terms we obtain in \eqref{eqn:difference} when summing $\card[U]{F \Delta F'}$ over the sets in $\cG$. Taking $\delta = 2 r_i^2 2^{2i - 2} / ( \nu \binom{\card{\cG}}{2} )$ in Lemma \ref{lemma:distances} gives $\card{\cG} \le 4 \mu / (2\delta) + 1 = 2 \mu \nu \card{\cG} (\card{\cG}-1) / r_i^2 2^{2i} + 1$ which simplifies to $\card{\cG} \ge r_i^22^{2i}/(2\mu\nu)$. If $r_i 2^i > 4 \mu \sqrt{2 \nu}$, we then have $\card{\cG} > 16\mu$, contradicting Bound II. Hence we may assume $r_i 2^i \le 4 \mu \sqrt{2 \nu}$ for every $i$.

We run a similar argument to bound $s_i$, assuming $s_i\neq 0$ (and so $\gamma\neq 0$).  If $F \in \cS^{(i)}$, then there is some $F' \in \cS^{(i)}$ such that $\card[W]{F \cap F'} \ge 2^i$.  In particular, $\card[W]{F} \ge 2^i$.  Jensen then gives
\[ \sum_{F,F' \in \cS^{(i)}} \card[W]{ F \cap F'} \ge \gamma \binom{\frac{s_i 2^i}{\gamma}}{2} \ge \frac{s_i^2 2^{2i - 2}}{\gamma}, \]
provided $s_i 2^i \ge 2 \gamma$.  Otherwise we have $s_i 2^i \le 2 \gamma \le 4 \mu \sqrt{2 \gamma}$ (since $\mu \ge n / 16$, $\gamma \le n$ and $n$ is large).  Again, this quantity lower bounds the additional terms in \eqref{eqn:difference}.  As before, we may apply Lemma \ref{lemma:distances} and Bound II to deduce that $s_i 2^i \le 4 \mu \sqrt{2 \gamma}$ for every $i$.
\end{proof}

\section{Tight bounds for $k=2$} \label{sec:kequalstwo}

We now study the problem for $k = 2$, with the goal of proving Theorem \ref{thm:kequalstwo}, reproduced below.

\setcounter{section}{1}
\setcounter{thm}{5}

\begin{thm}
For $n$ sufficiently large and $0 \le \lambda \le n$, we have the bounds
\begin{itemize}
	\item[(i)] $f(n,2,\lambda) \le 2n - 2$.
	\item[(ii)] $f(n,2,\lambda) \le \frac13 \left( 5n + 4 \min \left\{ \lambda, \frac{n - \lambda}{3} \right\} + 7 \right)$.
	\item[(iii)] $f(n,2,\lambda) \le \left( \frac32 + o(1) \right) n$ when $\lambda = o(n)$.
\end{itemize}
\end{thm}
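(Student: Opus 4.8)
The plan is to generalize the three-part strategy from the $k=1$ proof, now working with the auxiliary graph $G(\cF)$ of maximum degree at most $2$, whose components are therefore paths and cycles. The overall structure should mirror Section 3.2: establish an algebraic bound via the rank of the intersection matrix $M$, extract geometric structure about the sets participating in low-rank components, and then apply the Plotkin-type bound (Lemma 4.1.5, \texttt{lemma:distances}) to control the dominant family. First I would classify the connected components of $G$ by their graph type (isolated vertex, $P_1$, $P_2$, longer paths, and cycles $C_t$) and by the rank of the corresponding diagonal block $M(\cF_i)$. Since $M$ is block-diagonal, $\rank(M)=\sum_i \rank(M(\cF_i))$, and Lemma 4.1.4 (\texttt{lemma:vurank}) gives $\rank(M)\le n+1$. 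The key observation is that a component on $c_i$ vertices contributes $c_i$ sets but typically contributes rank close to $c_i$; the ``profit'' $c_i-\rank(M(\cF_i))$ measures how much a component exceeds Fisher's bound, exactly as the rank-$1$ $P_1$'s did when $k=1$.

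For part (i), the plan is to show the total profit $\sum_i\bigl(c_i-\rank(M(\cF_i))\bigr)$ is at most roughly $n-1$, which combined with $\rank(M)\le n+1$ yields $m=\sum_i c_i\le 2n-2$. The main obstacle here is bounding the rank deficiency of each component: for a path or cycle I expect to analyze the tridiagonal (respectively circulant-like) structure of $M(\cF_i)$ and show that each component of $c_i$ vertices has rank at least $c_i-1$, so its profit is at most $1$. This would need a careful determinant or eigenvalue computation showing that equation \eqref{eqn:determinant}-type degeneracies cannot cascade along a path without forcing strong containment relations among the sets. I anticipate this rank-deficiency analysis --- proving that long paths and cycles cannot be simultaneously low-rank --- to be the technical heart of the argument, and it is presumably where the technical lemmas deferred to Appendix A enter.

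For parts (ii) and (iii), I would isolate the components that achieve maximal profit (the analogues of rank-$1$ $P_1$'s) and show that the sets they contain share a large common core $V$ and partition a fixed set $U$, deriving an analogue of the structural equation \eqref{eqn:difference}. The plan is to form a $\lambda$-Fisher subfamily $\cG$ by choosing one vertex per profitable component, bound $\card{\cG}$ via Lemma 4.1.5 after averaging over a random element of $U$ (and, for (iii), using $\lambda=o(n)$ to kill the error terms exactly as the rank-$1$-only case did at the end of Bound III), and then pass back to $\card{\cF}$ through the component sizes. The quantity $\min\{\lambda,(n-\lambda)/3\}$ should reappear from maximizing $\mu$ subject to $\mu\le\lambda$ and $4\mu+\nu\le n$, precisely as in the $k=1$ derivation. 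The main difficulty in (ii)--(iii) is that the profit-to-size ratio now differs across component types, so the weighting in the final optimization is more delicate than for $k=1$; I would expect to prove a per-component inequality bounding $c_i$ in terms of $\rank(M(\cF_i))$ and $\mu$, then sum, with the constant $\tfrac53$ and coefficient $\tfrac43$ emerging from the worst-case component type.
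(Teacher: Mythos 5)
Your plan for part (i) hinges on a step that fails. You propose to show that every component on $c_i$ vertices has $\rank(M(\cF_i)) \ge c_i - 1$, so that each component's ``profit'' is at most $1$. For cycles this cannot be established: the tridiagonal-with-corners block of a cycle $C_s$ only yields $\rank \ge s-2$, and the paper does not exclude profit-$2$ components (rank-$2$ $C_4$'s and rank-$3$ $C_5$'s) --- it can only bound their \emph{number} (at most one rank-$2$ $C_4$, at most $2\lambda + \sqrt{\lambda n} + 175$ rank-$3$ $C_5$'s), and proving those counting lemmas is where most of the paper's work lies. Moreover, even if profit were at most $1$ per component, your accounting cannot reach the tight constant: each profitable component has rank at least $1$, so total profit $\le \rank(M) \le n+1$, giving only $m \le 2n+2$. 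Since $2n-2$ is exactly attained by the Hadamard construction, there is no room for ``roughly'' arguments. The idea missing from your proposal is the one the paper uses to close this gap: first, a lemma showing that the presence of a rank-$1$ $P_1$ or rank-$2$ $C_4$ block prevents the columns of $M$ from spanning the all-one vector, which upgrades the rank bound to $\rank(M) \le \rank(A) \le n$; and second, in the degenerate cases where $\rank(M) < \frac12 m + 1$, an abandonment of rank counting in favour of exhibiting $\frac12 m + 1$ explicitly linearly independent vectors in $\RR^n$ --- differences $v_{j_1}-v_{j_2}$ of characteristic vectors within each block (these are orthogonal across blocks because all cross intersections equal $\lambda$), with genuine characteristic vectors taken from one distinguished block. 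This gives $\frac12 m + 1 \le n$, i.e.\ $m \le 2n-2$.

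For parts (ii) and (iii) your outline matches the paper only for the rank-$1$ $P_1$'s: the paper does reuse the $k=1$ machinery (Bound III, without error terms) to show there are at most $4\min\left\{\lambda, \frac{n-\lambda}{3}\right\}$ of them, and the minimum does arise from $\mu \le \lambda$ and $4\mu + \nu \le n$ as you predict. But your plan to treat \emph{all} profitable components by one unified Plotkin-type averaging argument does not go through, because rank-$2$ $C_4$'s and rank-$3$ $C_5$'s do not have the ``common core plus partition of $U$'' structure of rank-$1$ $P_1$'s, and the paper handles them by entirely different arguments: a rank-$2$ $C_4$ is shown to force one of its sets to have size strictly less than $2\lambda - \card{X}$ while every set outside it must have size at least $2\lambda - \card{X}$, whence there is at most one such $C_4$; and the bound on rank-$3$ $C_5$'s --- which is indispensable for the $\frac32$ in part (iii), since without it the block-rank inequality only yields the $\frac53$ constant of part (ii)'s starting point --- requires a structural lemma proved in the appendix, a case split according to whether the common core has size $\lambda$ (Type I, bounded by a $2^{\card{X}}$ counting argument) or less (Type II, bounded via Fisher's inequality and Bonferroni), and, crucially, an outer induction on nested pairs $F \subset F'$ with $\card{F' \setminus F} \ge 2$ to reduce to a family where that structural lemma applies at all. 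None of these steps is a Plotkin-bound argument, and your proposal offers no route to them.
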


\setcounter{section}{4}
\setcounter{thm}{0}

The bound in part (i) is best possible, as shown by the Hadamard construction described in Section \ref{sec:k1prelim}.  This shows that, surprisingly, allowing one extra bad intersection per set does not provide sufficient freedom to construct larger families.  Part (ii) is a stability result, showing that there only exist $2$-almost $\lambda$-Fisher families of size close to $2n$ when $\lambda$ is close to $n/4$; however, we do not believe these bounds to be tight.  Thus in part (iii) we provide a sharper bound when $\lambda = o(n)$, where the constant $\frac32$ cannot be improved in light of Theorem \ref{thm:almostdisjoint}.

The proof of part (i) is an extension of the method of Vu \cite{v99}, but the proofs of parts (ii) and (iii) use a combination of these ideas and our arguments from Section \ref{sec:kequalsone}.  These proofs are given in Section \ref{sec:proofofk2}.  We begin, though, by providing some necessary lemmas in Section \ref{sec:lemmasfork2}.

\subsection{Preliminary Lemmas} \label{sec:lemmasfork2}

These simple lemmas, whose proofs we give in Appendix \ref{app:appendix}, will allow us to control the ranks of matrices appearing in the proof of Theorem \ref{thm:kequalstwo}. The first lemma shows that, under some mild conditions, we can always find a number of linearly independent vectors in various sets of $\{0,1\}$-vectors.

\begin{lemma}
\label{lem:(0,1)-vectors}
Let $v_i\in \{0,1\}^n$, $i\in [5]$, be five distinct non-zero vectors. Suppose that there exist $\lambda\in \RR\setminus\{0\}$ and $v\in \RR^n$ such that $v\cdot v_i=\lambda$ for $i\in [5]$. Then:
\begin{enumerate}[(a)]
\item The vectors $\{ v_i \}_{i \in [3]}$ are linearly independent.
\item The vectors $v_1-v_2$ and $v_1-v_3$ are linearly independent.
\item If the vectors $\{v_i\}_{i\in [4]}$ are linearly dependent, then $v_1+v_2=v_3+v_4$ holds for some relabelling of these four vectors.
\item Four of the vectors $\{v_i\}_{i\in [5]}$ are linearly independent.
\end{enumerate}
\end{lemma}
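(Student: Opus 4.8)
The unifying observation is that dotting any linear dependence with $v$ annihilates it: if $\sum_i \alpha_i v_i = 0$, then $\lambda \sum_i \alpha_i = v\cdot\bigl(\sum_i \alpha_i v_i\bigr) = 0$, so $\sum_i \alpha_i = 0$ because $\lambda\neq 0$. Thus every linear dependence among the $v_i$ is in fact an \emph{affine} dependence, and linear independence of a subset is equivalent to its affine independence. For part (a), suppose $v_1,v_2,v_3$ were dependent; the above forces a relation with coefficients summing to zero, which after discarding the degenerate cases (a single pair of equal-and-opposite coefficients would force two of the $v_i$ to coincide) rewrites as $v_3 = \beta v_1 + (1-\beta)v_2$. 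Since $v_3\in\{0,1\}^n$, I would read this identity off coordinatewise on the set where $v_1$ and $v_2$ differ: a coordinate with $(v_1)_j=1,(v_2)_j=0$ gives $(v_3)_j=\beta$, and one with $(v_1)_j=0,(v_2)_j=1$ gives $(v_3)_j=1-\beta$, forcing $\beta\in\{0,1\}$ (else $v_1=v_2$), and either value returns $v_3\in\{v_1,v_2\}$, contradicting distinctness. Part (b) is then immediate, since a dependence of $v_1-v_2$ and $v_1-v_3$ rearranges to a linear dependence of $v_1,v_2,v_3$, impossible by (a).

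For part (c), write the dependence as $\sum_{i=1}^4 \alpha_i v_i = 0$. By (a) no three of the vectors are dependent, so all four coefficients are nonzero, and as they sum to zero they cannot all share a sign. I would first rule out a $3$--$1$ split: if three coefficients had one sign and the fourth the other, one $v_i$ would be a convex combination of the other three, impossible since a $\{0,1\}$-vector is an extreme point of $[0,1]^n$ and hence equals each vector appearing with positive weight. Hence after relabelling the split is $2$--$2$, giving $a v_1 + b v_2 = c v_3 + d v_4$ with $a,b,c,d>0$ and $a+b=c+d$. I would then analyse this equality coordinate by coordinate: inspecting the possible bit-patterns $((v_1)_j,(v_2)_j,(v_3)_j,(v_4)_j)$, the only ones compatible with $a,b,c,d>0$ and $a+b=c+d$ are the two constant patterns together with $1010,0101$ (which force $a=c,\,b=d$) and $1001,0110$ (which force $a=d,\,b=c$). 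If only the first pair of non-constant patterns occurred then $(v_1)_j=(v_3)_j$ and $(v_2)_j=(v_4)_j$ in every coordinate, i.e.\ $v_1=v_3$ and $v_2=v_4$; similarly the second pair alone forces $v_1=v_4,\,v_2=v_3$. Distinctness therefore requires patterns from both families to appear, and combining their constraints yields $a=b=c=d$, that is $v_1+v_2=v_3+v_4$, the claimed parallelogram relation.

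For part (d), suppose for contradiction that every four of the $v_i$ are dependent. By (a) every three are independent, so by the opening observation every triple is affinely independent while every quadruple is affinely dependent; hence the affine hull of the five points has dimension exactly $2$. I would fix affine coordinates in this plane by sending $v_1,v_2,v_3$ (affinely independent by (a)) to $(0,0),(1,0),(0,1)$. Applying part (c) to $\{v_1,v_2,v_3,v_4\}$, the parallelogram relation places $v_4$ at one of the three completions $(1,1),(1,-1),(-1,1)$, and likewise $v_5$; since $v_4\neq v_5$ they occupy two distinct such points. The decisive fact is that the sum of any two of these completions is twice one of $(0,0),(1,0),(0,1)$, explicitly $(1,-1)+(-1,1)=2(0,0)$, $(1,-1)+(1,1)=2(1,0)$ and $(-1,1)+(1,1)=2(0,1)$. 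Thus in every case one of $v_1,v_2,v_3$ is the midpoint of $v_4$ and $v_5$, making those three vectors collinear and hence dependent, contradicting (a). This contradiction shows that some four of the $v_i$ must be linearly independent.

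The main obstacle is part (c): passing from the abstract affine dependence to the rigid identity $v_1+v_2=v_3+v_4$ genuinely exploits the $\{0,1\}$-structure, and the coordinatewise pattern bookkeeping — first excluding the $3$--$1$ split, then showing that both pattern families must occur — is the delicate step on which both (c) and (d) rest.
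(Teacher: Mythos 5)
Your proposal is correct, and its skeleton coincides with the paper's: both proofs hinge on the observation that dotting any dependence with $v$ forces the coefficients to sum to zero, and then exploit the $\{0,1\}$-structure coordinatewise; your parts (a) and (b) are essentially identical to the paper's. Where you diverge is in the execution of (c) and (d). For (c), the paper rules out the $3$--$1$ split by a direct coordinate computation (a coordinate where $v_1$ and $v_2$ differ yields a value strictly between $0$ and $1$ on one side and a value in $\{0,1\}$ on the other), and handles the $2$--$2$ split sequentially: a coordinate where $v_1\neq v_2$ forces $\{\alpha_1,\alpha_2\}=\{\alpha_3,1\}$, and a second coordinate where $v_1\neq v_3$ then pins all coefficients to $1$. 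Your extreme-point argument for the $3$--$1$ split and your exhaustive sixteen-pattern analysis for the $2$--$2$ split (two admissible non-constant pattern families, with distinctness forcing both to occur) are clean, correct alternatives that avoid the paper's iterated case analysis. For (d), the paper applies (c) to $\{v_1,v_2,v_3,v_4\}$ and $\{v_1,v_2,v_3,v_5\}$, eliminates the relabelling that would give $v_4=v_5$, and adds the two surviving relations to obtain $2v_1=v_4+v_5$, contradicting (a); your affine-coordinate picture reaches the identical contradiction $2v_j=v_4+v_5$, but the three parallelogram completions and the midpoint computation make the case analysis automatic, replacing the paper's ``by symmetry'' bookkeeping. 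The two routes are algebraically equivalent; yours buys uniformity and geometric transparency, while the paper's stays entirely within elementary coordinate manipulations.
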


Let $\cF = \{F_1, \hdots, F_m\}$ denote a $2$-almost $\lambda$-Fisher family of sets over $[n]$ with parameter $\lambda \ne 0$. Assume for now that $m \ge 6$ and $\card{F_i} > \lambda$ for each $i\in [m]$. The next lemmas relate the auxiliary graph $G(\cF)$ with the ranks of the intersection matrix $M(\cF)$ and the element-set incidence matrix $A(\cF)$ (see section \ref{subsec:outline and notation} for the relevant definitions).

Note that, since $\cF$ is a $2$-almost $\lambda$-Fisher family, the graph $G(\cF)$ has maximum degree $2$, and hence its connected components are paths and cycles. Thus, if the sets in $\cF$ are ordered appropriately, $M(\cF)$ is a block-diagonal matrix, with each block corresponding to a path or a cycle in $G(\cF)$. Since $M(\cF)$ is block-diagonal, the rank of $M(\cF)$ is the sum of the ranks of its block matrices.  The following lemma provides lower bounds on the ranks of the corresponding subfamilies $\cF' \subset \cF$, which we identify with the components $G(\cF')$ in $G(\cF)$.

\begin{lemma} \label{lem:compranks}
The ranks of the components can be bounded as follows:
\begin{enumerate}[(a)]
\item If $\cF'$ is the $s$-vertex path $P_{s-1}$, $\rank(M(\cF')) \ge s-1$.
\item If $\cF'$ is the $s$-vertex cycle $C_s$, $\rank(M(\cF')) \ge s-2$.
\item If $\cF'$ is the triangle $C_3$, and there is some set $F$ whose intersections with every set in $\cF'$ all have size $\lambda$, then $\rank(M(\cF')) \ge 2$.
\end{enumerate}
\end{lemma}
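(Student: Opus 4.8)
The plan is to read the combinatorial structure of $M(\cF')$ directly off the fact that each $\cF'$ is an \emph{induced} component of $G(\cF)$: two sets of $\cF'$ are adjacent in $G$ precisely when their intersection differs from $\lambda$, so for any non-adjacent pair $F_i,F_j$ we have $M(\cF')_{i,j}=\card{F_i\cap F_j}-\lambda=0$. Together with the standing hypothesis $\card{F_i}>\lambda$, which makes every diagonal entry $\card{F_i}-\lambda$ strictly positive, this shows that when $\cF'=P_{s-1}$ is ordered along the path, $M(\cF')$ is a symmetric tridiagonal matrix with strictly positive diagonal and with all super- and sub-diagonal entries $\card{F_i\cap F_{i+1}}-\lambda$ non-zero (these being the path edges of $G$). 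For $\cF'=C_s$ the matrix is the same, with two extra non-zero corner entries from the closing edge. For part (a) I would then exhibit an $(s-1)\times(s-1)$ non-singular submatrix: take the rows indexed by $2,3,\ldots,s$ and the columns indexed by $1,2,\ldots,s-1$, obtaining a square submatrix $B$ with $B_{a,a}=\card{F_{a+1}\cap F_a}-\lambda\neq 0$ on its diagonal, while every entry strictly below the diagonal is $M_{a+1,c}$ with $c\le a-1$, hence indexes a pair differing by at least two, i.e.\ a non-adjacent pair, and so vanishes. Thus $B$ is triangular with non-zero diagonal, $\det B\neq 0$, and since $B$ is a submatrix of $M(\cF')$ we conclude $\rank(M(\cF'))\ge s-1$.

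Part (b) reduces cleanly to part (a). Deleting one vertex of the cycle $C_s$ leaves the path $P_{s-2}$ on the remaining $s-1$ sets, and the associated principal submatrix of $M(\cF')$ is exactly the intersection matrix of that path subfamily. By part (a) this principal submatrix has rank at least $s-2$, and since the rank of a submatrix never exceeds the rank of the ambient matrix, $\rank(M(\cF'))\ge s-2$.

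Part (c) is the step I expect to carry the real content, and it is where the extra hypothesis — the set $F$ meeting each of $F_1,F_2,F_3$ in exactly $\lambda$ elements — must enter (without it a triangle can genuinely have rank $1$, so part (b) alone only yields $s-2=1$). Suppose for contradiction that $\rank(M(\cF'))\le 1$. Writing $A$ for the incidence matrix of $\{F_1,F_2,F_3\}$ and recalling $A^TA=M(\cF')+\lambda J_3$, subadditivity of rank gives $\rank(A^TA)\le \rank(M(\cF'))+\rank(\lambda J_3)\le 1+1=2$, using $\rank(\lambda J_3)=1$ since $\lambda\neq 0$. As $\rank(A)=\rank(A^TA)$, the characteristic vectors $v_1,v_2,v_3$ would span a space of dimension at most $2$ and hence be linearly dependent. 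However, $v_F\cdot v_i=\card{F\cap F_i}=\lambda\neq 0$ for each $i$, so Lemma \ref{lem:(0,1)-vectors}(a) applies to the three distinct non-zero vectors $v_1,v_2,v_3$ with common vector $v=v_F$ (that statement concerns only these three vectors) and forces them to be linearly independent, a contradiction. Therefore $\rank(M(\cF'))\ge 2$.

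In summary, parts (a) and (b) become routine once the (cyclic) tridiagonal shape of $M(\cF')$ is made explicit; the only genuine obstacle is part (c), namely recognising that a rank-$1$ intersection matrix is incompatible with three linearly independent characteristic vectors and that the auxiliary set $F$ is exactly what supplies that independence through Lemma \ref{lem:(0,1)-vectors}. The one technical point to state carefully is that we invoke only the three-vector conclusion of that lemma, which is all its hypotheses require here.
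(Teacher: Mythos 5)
Your proposal is correct and follows essentially the same route as the paper: part (a) uses the identical triangular submatrix (rows $2,\dots,s$, columns $1,\dots,s-1$), and part (c) is the same combination of Lemma \ref{lem:(0,1)-vectors}(a) applied with $v = v_F$ and the rank inequality $\rank(M) \ge \rank(A^TA) - 1$, which you derive by subadditivity exactly as the paper does via Lemma \ref{lemma:vurank}. The only cosmetic difference is in (b), where you reduce to (a) by deleting one cycle vertex and invoking rank monotonicity of submatrices, while the paper directly exhibits an $(s-2)\times(s-2)$ triangular submatrix; both rest on the same (almost-)tridiagonal structure and are equally valid.
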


The rank-$1$ $P_1$'s and rank-$2$ $C_4$'s will play an important role in our proofs, and so we obtain some further information about the corresponding matrices in the following lemma.

\begin{lemma} \label{lem:lowrankspan}
Let $\cF$ be a $2$-almost $\lambda$-Fisher family of size $\card{\cF} \ge 5$ with $\card{F} > \lambda$ for all $F \in \cF$.  If a component $\cF' \subset \cF$ is either a rank-$1$ $P_1$ or a rank-$2$ $C_4$, then the columns of $M(\cF')$ do not span the all-$1$ vector $(1, \hdots, 1)^T$.
\end{lemma}

\subsection{Proof of Theorem \ref{thm:kequalstwo}} \label{sec:proofofk2}

With these lemmas in place, we may now proceed with the proof of Theorem \ref{thm:kequalstwo}.

Let $\cF = \{F_1, F_2, \hdots, F_m\}$ be a $2$-almost $\lambda$-Fisher family over $[n]$, for some large enough $n$.  Note that if $\lambda = 0$, then by Theorem \ref{thm:almostdisjoint}, it follows that $\card{\cF} \le 3n/2 + 1$, which is small enough to satisfy the bounds from all three parts.  Hence we may assume $\lambda \ge 1$.  If there is some set $F \in \cF$ with $\card{F} = \lambda$, then we know at most two sets fail to contain $F$.  The remaining sets, restricted to the universe $[n]\setminus F$, form a $2$-almost disjoint family $\cF'$.  It follows from Theorem \ref{thm:almostdisjoint} that
\[\card{\cF} \le \card{\cF'} + 2 \le \frac32 (n - \lambda) + 1 + 2 \le \frac32 n + \frac32, \]
which again suffices.  Finally, observe that we may take $m \ge 6$, as otherwise there is nothing to prove for large $n$.  Hence we may assume $m \ge 6, \lambda \ge 1$ and $\card{F} > \lambda$ for all $F \in \cF$, and thus the lemmas of the previous subsection apply.

As mentioned before, we can order the sets in $\cF$ in such a way that the intersection matrix $M(\cF)$ is a block-diagonal matrix, with each block corresponding to a connected component - path or cycle - in $G(\cF)$. Let $M_1, \hdots, M_t$ be the blocks of $M(\cF)$ and let $m_i$ be the number of sets in the corresponding component.  Since $A(\cF)$ is an $n \times m$ matrix, $\rank(A(\cF)) \le n$.  By Lemma \ref{lemma:vurank}, we then have
\[ \sum_{i=1}^t \rank (M_i) = \rank (M(\cF)) \le \rank (A(\cF)) + 1 \le n + 1. \]

Moreover, by Lemma \ref{lem:compranks}, it follows that $\rank (M_i) \ge \frac23 m_i$, unless $M_i$ corresponds to a rank-$1$ $P_1$, a rank-$2$ $C_4$ or a rank-$3$ $C_5$.  Suppose the first $p$ blocks are rank-$1$ $P_1$'s, the next $q$ blocks are rank-$2$ $C_4$'s and the following $r$ blocks are rank-$3$ $C_5$'s.  If we separate the rank-$1$ $P_1$'s and the rank-$2$ $C_4$'s, then the remaining blocks $M_i$ have rank at least $\frac35 m_i$, giving
\[ n + 1 \ge \rank (M(\cF)) = \sum_i \rank (M_i) \ge p + 2q + \frac35 \sum_{i > p + q} m_i = p + 2q + \frac35 (m - 2p - 4q), \]
resulting in
\begin{equation} \label{eqn:withoutC5}
m \le \frac53n + \frac{p}{3} + \frac{2q}{3} + \frac53.
\end{equation}

To obtain a sharper bound, we must account for the number of rank-$3$ $C_5$'s as well.  We have
\[ n + 1 \ge \sum_i \rank (M_i) \ge p + 2q + 3r + \frac23 \sum_{i > p + q + r} m_i = p + 2q + 3r + \frac23 (m - 2p - 4q - 5r), \]
and so
\begin{equation} \label{eqn:withC5}
m \le \frac32 n + \frac{p}{2} + q + \frac{r}{2} + \frac32.
\end{equation}

By bounding $p, q$ and $r$ appropriately in the following subsections, we shall establish the three bounds in Theorem \ref{thm:kequalstwo}.

\subsubsection{Proof of part (i)} \label{sec:k2i}

From \eqref{eqn:withoutC5}, if there are no rank-$1$ $P_1$'s or rank-$2$ $C_4$'s (so $p = q = 0$), we then have $m \le \frac53 n + \frac53 < 2n - 2$ for $n$ large.  Hence we may assume there is either a rank-$1$ $P_1$ or a rank-$2$ $C_4$. In either case, by Lemma \ref{lem:lowrankspan} it follows that the columns of $M(\cF)$ do not span the all-$1$ vector.  By Lemma \ref{lemma:vurank}, we then in fact have $\rank (M(\cF)) \le \rank (A(\cF)) \le n$.  Thus, if $\rank (M(\cF)) \ge \frac12 m + 1$, we would have $m \le 2n - 2$, as desired. Otherwise, since $\rank(M_i) \ge \frac12 m_i + \frac12$ for any block $M_i$ that is not a rank-$1$ $P_1$ or rank-$2$ $C_4$, if $\rank (M(\cF))< \frac12 m+1$ we must have one of the following cases:
\begin{itemize}
	\item[I.] $\rank (M(\cF)) = \frac12 m$, and each block $M_i$ is a rank-$1$ $P_1$ or a rank-$2$ $C_4$, or
	\item[II.] $\rank (M(\cF)) = \frac12 m + \frac12$, there is one block $M_t$ of rank $\frac12 m_t + \frac12$, and the remaining blocks are rank-$1$ $P_1$'s or rank-$2$ $C_4$'s.	
\end{itemize}

We shall now consider the characteristic vectors of the sets in $\cF$.  For $1 \le i \le m$, let $v_i$ be the characteristic vector of the set $F_i$. \\

\noindent \underline{Case I:}  For $2 \le i \le t$, we assign to each block $M_i$ a set $X_i$ of $\frac12 m_i$ vectors as follows.  If $M_i$ is a rank-$1$ $P_1$ with sets $F_{j_1}$ and $F_{j_2}$, we set $X_i = \{v_{j_1} - v_{j_2}\}$.  If $M_i$ is a rank-$2$ $C_4$ with sets $F_{j_1}, F_{j_2}, F_{j_3}$ and $F_{j_4}$, we set $X_i = \{v_{j_1} - v_{j_2}, v_{j_1} - v_{j_3}\}$. By Lemma \ref{lem:(0,1)-vectors} (b), these two vectors are linearly independent.

To the block $M_1$, we assign a set $X_1$ of $\frac12 m_1 + 1$ vectors.  If $M_1$ is a rank-$1$ $P_1$ with sets $F_{j_1}$ and $F_{j_2}$, set $X_1 = \{v_{j_1}, v_{j_2}\}$.  If $M_1$ is a rank-$2$ $C_4$ with sets $F_{j_1}, F_{j_2}, F_{j_3}$ and $F_{j_4}$, we take $X_1 = \{ v_{j_1}, v_{j_2}, v_{j_3}\}$.  By Lemma \ref{lem:(0,1)-vectors} (a), $X_1$ is also linearly independent.

Since sets from different blocks have pairwise intersections of size $\lambda$, it is easy to see that vectors in $X = \cup_i X_i$ from different blocks are orthogonal to one another.  Thus $X$ is a collection of $\frac12 m + 1$ linearly independent vectors in $\mathbb{R}^n$, and so $\frac12 m + 1 \le n$, giving $m \le 2n - 2$. \\

\noindent \underline{Case II:} For $1 \le i \le t-1$, we define sets of vectors $X_i$ as before.

In light of Lemma \ref{lem:compranks}, the block $M_t$ of rank $\frac12 m_t + \frac12$ must be either a rank-$2$ $P_2$, a rank-$2$ $C_3$, or a rank-$3$ $C_5$.  If the block $M_t$ is either a $P_2$ or a $C_3$, then by Lemma \ref{lem:(0,1)-vectors} (a) the characteristic vectors of the three sets involved must be linearly independent, and so we take all three vectors in $X_t$.  If it is a $C_5$, then by Lemma \ref{lem:(0,1)-vectors} (d) four of the sets have linearly independent characteristic vectors, which we add to $X_t$.

As in Case I, $X = \cup_i X_i$ forms a collection of $\frac12 m + \frac32$ linearly independent vectors in $\mathbb{R}^n$, and so in this case we have $m \le 2n - 3$.

Hence, in either case we have $m \le 2n - 2$, completing the proof of part (i).

\subsubsection{Proof of part (ii)} \label{sec:k2ii}

In light of \eqref{eqn:withoutC5}, to bound the size of $2$-almost $\lambda$-Fisher families, it suffices to bound the number of rank-$1$ $P_1$'s and rank-$2$ $C_4$'s.  This is done in the following lemma.

\begin{lemma}
\label{lemma:boundingthenumberoflowrankstructures}
Let $\cF$ be a $2$-almost $\lambda$-Fisher family over $[n]$ with $\card{\cF} \ge 6$ and $\card{F} > \lambda \ge 1$ for all $F \in \cF$. Then:
\begin{enumerate}[(a)]
\item there are at most $4 \min \left\{ \lambda, \frac{n - \lambda}{3} \right\}$ rank-$1$ $P_1$'s in $G(\cF)$.
\item there is at most one rank-$2$ $C_4$ in $G(\cF)$.
\end{enumerate}
\end{lemma}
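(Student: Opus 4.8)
The plan is to prove the two parts separately, exploiting the rigid structure that rank-$1$ $P_1$'s and rank-$2$ $C_4$'s force on the characteristic vectors.

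\textbf{Part (a).} The key observation is that a rank-$1$ $P_1$ $\{F,F'\}$ satisfies the determinant condition \eqref{eqn:determinant}, namely $(\card{F}-\lambda)(\card{F'}-\lambda) = (\card{F\cap F'}-\lambda)^2$ with $\card{F\cap F'}<\lambda$. I would mimic the argument leading to Bound III in Section \ref{subsec:proofforkequals1}. Collect all sets lying in rank-$1$ $P_1$'s; set $\nu = \min \card{F_{i,0}\cap F_{i,1}}$ and $\mu = \lambda - \nu$ over these pairs. Pick any set $F\in\cF$ outside a fixed minimizing pair $\{F_{1,0},F_{1,1}\}$ (which exists since $\card{\cF}\ge 6$); the inequality $\card{F} \ge 2\lambda - \nu$ obtained by intersecting with $F_{1,0}\cup F_{1,1}$ forces, via the equality case of \eqref{eqn:determinant}, a common core $V$ of size $\nu$ inside every set of a rank-$1$ $P_1$, with the two sets of each pair partitioning a set $U$ of size $4\mu$ outside $V$. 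This is exactly the structure extracted in Section \ref{sec:kequalsone}. The difference now is that $\cF$ may contain other sets (from cycles or higher-rank paths), but these only help: restricting attention to one set from each rank-$1$ $P_1$ gives a $\lambda$-Fisher subfamily, and the final remark in the proof of Bound III handles precisely the case where we take only sets in rank-$1$ $P_1$'s, giving the clean count $m' \le 8\mu$ on the number of such sets, i.e. at most $4\mu$ rank-$1$ $P_1$'s. Optimizing $\mu$ subject to $\mu\le\lambda$ and $4\mu+\nu\le n$ with $\nu=\lambda-\mu$ yields $\mu \le \min\{\lambda, (n-\lambda)/3\}$, giving the stated bound $4\min\{\lambda,(n-\lambda)/3\}$ on the number of rank-$1$ $P_1$'s.

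\textbf{Part (b).} Here I would argue that two rank-$2$ $C_4$'s cannot coexist. A rank-$2$ $C_4$ on sets $F_1,F_2,F_3,F_4$ has a $4\times 4$ intersection matrix of rank $2$; by Lemma \ref{lem:(0,1)-vectors}(c) the linear dependence among the four characteristic vectors forces a relation $v_1 + v_3 = v_2 + v_4$ (after relabelling), which translates into a tight additive structure on the four sets analogous to the Hadamard relation. The strategy is to show that the common intersection pattern forced by one rank-$2$ $C_4$ is incompatible with a second one: using that all cross-intersections between the two $C_4$'s equal $\lambda$ (since distinct components have size-$\lambda$ intersections), together with the relation $v_1+v_3=v_2+v_4$ and its analogue for the second $C_4$, one derives enough linearly independent vectors or a size contradiction. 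Concretely, I would compute the ranks: each rank-$2$ $C_4$ contributes its relation, and orthogonality of vectors across components (from the size-$\lambda$ cross-intersections) means the associated difference vectors stack up; examining whether the all-$1$ vector can simultaneously fail to be spanned by both blocks, as guaranteed by Lemma \ref{lem:lowrankspan}, should produce the contradiction.

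\textbf{Main obstacle.} Part (a) is essentially a transcription of the Section \ref{sec:kequalsone} machinery and should go through smoothly, since the presence of extra non-$P_1$ sets only relaxes constraints. The genuinely new difficulty is part (b): unlike the $k=1$ case, a $C_4$ does not pin down its sets as rigidly as a rank-$1$ $P_1$ does, so I expect the hard part to be ruling out two \emph{distinct} rank-$2$ $C_4$'s, where I must carefully combine the two additive relations from Lemma \ref{lem:(0,1)-vectors}(c) with the cross-orthogonality and the all-$1$-vector obstruction from Lemma \ref{lem:lowrankspan} to force either a dimension contradiction or an impossible intersection equation.
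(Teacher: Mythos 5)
Your part (a) is correct and is essentially the paper's own argument: restrict to the subfamily of sets lying in rank-$1$ $P_1$'s, which is a $1$-almost $\lambda$-Fisher family all of whose components are rank-$1$ $P_1$'s, extract the core/universe structure $(V,U)$ with $\card{U}=4\mu$, and invoke the error-free form of Bound III to conclude there are at most $8\mu$ such sets, i.e.\ at most $4\mu$ pairs, where $\mu \le \min\left\{\lambda, \frac{n-\lambda}{3}\right\}$.

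Part (b), however, has a genuine gap, and the mechanism you propose cannot be made to work. A dimension count is hopeless here: two rank-$2$ $C_4$'s contribute only eight sets and total rank $4$ to the block-diagonal matrix $M(\cF)$, which is perfectly compatible with $\rank(M(\cF)) \le n+1$ for large $n$. Likewise, Lemma \ref{lem:lowrankspan} says that each such block fails to span the all-$1$ vector, but several blocks simultaneously failing to span it creates no contradiction whatsoever --- this is exactly the situation with rank-$1$ $P_1$'s, of which the family may contain as many as $4\lambda$. The obstruction to having two rank-$2$ $C_4$'s is not linear-algebraic but quantitative, about set sizes. Moreover, your starting relation is the wrong one: for a $C_4$ with sets $F_1,F_2,F_3,F_4$ in cyclic order, one first shows from the rank-$2$ row relation that some cyclic intersection satisfies $\card{F_i \cap F_{i+1}} < \lambda$, and then that the diagonal relation $v_1+v_3=v_2+v_4$ is \emph{impossible}: it would force $F_1 \cap F_3 = F_2 \cap F_4$, a set of size $\lambda$ since these pairs are non-adjacent, so all four sets would share $\lambda$ common elements, contradicting $\card{F_i\cap F_{i+1}}<\lambda$. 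Hence the dependency must pair adjacent sets, say $v_1+v_2=v_3+v_4$, giving a common core $X = F_1\cap F_2 = F_3\cap F_4$ with $\card{X}<\lambda$ and a common universe $U = F_1\cup F_2 = F_3\cup F_4$. The key missing idea is the resulting size dichotomy: using the rank-$2$ relation once more, one of the four sets has size strictly less than $2\lambda-\card{X}$, whereas every set of $\cF$ outside this $C_4$ meets $F_1$ and $F_2$ in exactly $\lambda$ elements each and therefore has size at least $2\lambda-\card{X}$. Given two rank-$2$ $C_4$'s with cores $X$ and $X'$, say $\card{X}\ge\card{X'}$, the small set of the first has size less than $2\lambda-\card{X}$, but, lying outside the second, it must have size at least $2\lambda-\card{X'} \ge 2\lambda-\card{X}$ --- a contradiction. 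Without this size argument your sketch does not rule out the coexistence of two rank-$2$ $C_4$'s.
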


By Lemma \ref{lemma:boundingthenumberoflowrankstructures}, the number of rank-$1$ $P_1$'s is bounded by $p \le 4 \min \left\{ \lambda, \frac{n - \lambda}{3} \right\}$, while the number of rank-$2$ $C_4$'s satisfies $q \le 1$.  Substituting these bounds into \eqref{eqn:withoutC5} gives
\[ m \le \frac53n + \frac{p}{3} + \frac{2q}{3} + \frac53 \le \frac13 \left( 5n + 4 \min \left\{ \lambda, \frac{n - \lambda}{3} \right\} + 7 \right),\]
as required.  Hence we need only prove Lemma \ref{lemma:boundingthenumberoflowrankstructures} to establish the bound in part (ii).

\begin{proof}[Proof of Lemma \ref{lemma:boundingthenumberoflowrankstructures}]
We begin with part (a).  Note that by restricting ourselves to the subfamily of rank-$1$ $P_1$'s, we obtain a $1$-almost $\lambda$-Fisher family, and thus may apply our results from Section \ref{sec:kequalsone}.  If there is at most one rank-$1$ $P_1$, we are done, and hence we may assume that there are at least two.

Recall that in this case, the sets from the rank-$1$ $P_1$'s are supported on $4 \mu + ( \lambda - \mu )$ elements of $[n]$ for some $\mu \le \lambda$, and so $\mu \le \frac{n - \lambda}{3}$.  We now use Bound III from Section \ref{subsec:proofforkequals1}.  Since all the sets are in rank-$1$ $P_1$'s, we do not require the error terms that appear in Bound III.  Thus there are at most $8 \mu$ sets, and hence $4 \mu$ rank-$1$ $P_1$'s.  Given our bounds on $\mu$, it follows that there are at most $4 \min \left\{ \lambda, \frac{n - \lambda}{3} \right\}$ rank-$1$ $P_1$'s, as claimed.

Now we prove part (b). Let $F_1,F_2,F_3,F_4\in \mathcal{F}$ be sets that correspond in cyclic order to a rank-$2$ $C_4$. Define $s_i=|F_i|-\lambda$ and $p_i=|F_i\cap F_{i+1}|-\lambda$ for each $i\in [4]$ (indices considered modulo $4$). Note that for each $i\in [4]$ we are assuming that $s_i>0$ and $p_i\neq 0$. Then, we have
\[M=M(\{F_1,F_2,F_3,F_4\})=\begin{pmatrix}
s_1 & p_1 & 0 & p_4\\ 
p_1 & s_2 & p_2 & 0\\ 
0 & p_2 & s_3 & p_3\\ 
p_4 & 0 & p_3 & s_3
\end{pmatrix}.\]
It is easily seen that any two rows of $M$ are linearly independent. Moreover, since $M$ has rank $2$, any three rows of $M$ are linearly dependent. In particular, there must exist $\alpha, \beta\in \RR$ such that:
\begin{equation}
\label{eqn: rank 2 linear relation}
\alpha (s_1,p_1,0,p_4)+\beta (p_1,s_2,p_2,0)=(0,p_2,s_3,p_3)
\end{equation}

Suppose now that $p_i>0$ for all $i\in[4]$. Looking at the third and fourth coordinates in equation~(\ref{eqn: rank 2 linear relation}) it follows that $\beta=\frac{s_3}{p_2}>0$ and $\alpha=\frac{p_3}{p_4}>0$. However, looking at the first coordinate we see that we cannot have $\alpha>0$ and $\beta>0$ since in that case $\alpha s_1+\beta p_1>0$. It thus follows that at least one of the $p_i$'s is negative, and so at least one of the intersections $F_i\cap F_{i+1}$ has size less than $\lambda$.

Recall that the columns of the incidence matrix $A = A(\{F_1, F_2, F_3, F_4 \})$ are the characteristic vectors $v_i$.  Since $M=A^{T}A-\lambda J_4$ and, by Lemma \ref{lemma:vurank}, we have $\rank (A)=\rank (A^TA)\leq \rank (M)+1=3$, the vectors $\{v_i\}_{i \in [4]}$ must be linearly dependent. Thus, by Lemma \ref{lem:(0,1)-vectors} (c), one of the following linear relations must hold
\[\text{(i)}\; v_1+v_2=v_3+v_4, \;\;\text{(ii)}\;v_1+v_3=v_2+v_4, \;\;\text{(iii)}\;v_1+v_4=v_2+v_3.\]

We claim that (ii) does not hold. Indeed, if it did hold then it would follow that $F_1\cap F_3=F_2\cap F_4$.  However, since $F_1$ and $F_3$ are not adjacent in $G(\cF)$, this intersection has size $\lambda$, and so $|F_1\cap F_2\cap F_3\cap F_4|=\lambda$, contradicting the fact shown above that $|F_i\cap F_{i+1}|<\lambda$ for some $i\in [4]$.

Observe also that (i) and (iii) are the same up to a cyclic relabeling of the sets $F_i$. Thus we may assume that (i) holds. In terms of the sets $F_i$, this equation implies that they all contain a common core $X =F_1\cap F_2=F_3\cap F_4$ and are supported on the same universe $U=F_1\cup F_2=F_3\cup F_4$.

Since $|F_i\cap F_{i+1}|<\lambda$ for some $i\in [4]$ it is clear that $|X|<\lambda$ and so $p_1=p_3=|X|-\lambda <0$. Note that we cannot have both $p_2>0$ and $p_4>0$ as in that case it would follow from equation~(\ref{eqn: rank 2 linear relation}) that $\alpha=\frac{p_3}{p_4}<0$, $\beta=\frac{s_3}{p_2}>0$ and hence $\alpha s_1+\beta p_1<0$, contradicting the fact that $\alpha s_1 + \beta p_1=0$. Thus, we may assume without loss of generality that $p_2<0$. This means that $|F_2\cap F_3|=\lambda+p_2<\lambda$. Moreover, since $|F_2\cap F_4|=\lambda$ it follows that
\[|F_2|=|F_2\cap U|=|F_2\cap F_3|+|F_2\cap F_4|-|F_2\cap F_3\cap F_4|<2\lambda-|X|.\]

Let $F\in \mathcal{F}$ be a set different from all the sets $F_i$. Since $F$ is not adjacent to any of the sets $F_i$ in $G(\cF)$, $\card{F \cap F_i} = \lambda$ for all $i$, and so
\[|F|\ge |F\cap U|=|F\cap F_1|+|F\cap F_2|-|F\cap X|\ge 2\lambda-|X|.\]

We have shown that among any four sets $F_1,F_2,F_3,F_4$ in a rank-$2$ $C_4$ there is one of size strictly less than $2\lambda-|X|$, where $X=F_1\cap F_2\cap F_3\cap F_4$ is the corresponding common core. Moreover, any other set in $\mathcal{F}$ must have size at least $2\lambda-|X|$. Suppose now that there are two rank-$2$ $C_4$'s in $G(\cF)$. Let $X$ and $X'$ be the common cores of their sets and assume without loss of generality that $|X|\ge |X'|$. By the above there is a set $F$ in the first $C_4$ of size less than $2\lambda- |X|$. However, since $F$ is not in the second $C_4$, we also have $|F|\ge 2\lambda -|X'| \ge 2 \lambda - \card{X}$, giving a contradiction. We conclude that there is at most one rank-$2$ $C_4$ in $G(\cF)$, completing the proof of the lemma.
\end{proof}

\subsubsection{Proof of part (iii)} \label{sec:k2iii}

We now wish to show that when $\lambda = o(n)$, a $2$-almost $\lambda$-Fisher family $\cF$ can have at most $\left( \frac32 + o(1) \right)n$ sets.  We will in fact prove that such a family can have size at most 
\[ \frac32 n + 3 \lambda + \frac12 \sqrt{ \lambda n } + 90. \]
Our proof is by induction on the number of pairs $F_i, F_j \in \cF$ with $F_i \subset F_j$ and $\card{F_j \setminus F_i} \ge 2$.  Recall from our previous discussion that we may assume $\lambda \ge 1$ and that $|F|>\lambda$ for every $F \in \cF$. Moreover, we may assume $m\ge 6$, since otherwise we have nothing to prove.

For the induction step, suppose $F_i \subset F_j$ with $\card{F_j \setminus F_i} \ge 2$. Note that $\card{F_i \cap F_j}=|F_i|>\lambda$, and so both $F_i$ and $F_j$ can have at most one other bad intersection.  Thus all but at most two sets in $\cF\setminus\{F_i,F_j\}$ intersect both $F_i$ and $F_j$ in precisely $\lambda$ elements. Given such a set $F$, since $F_i\subset F_j$, $\card{F \cap F_i} = \card{F \cap F_j}$ implies $F \cap (F_j \setminus F_i) = \emptyset$, and we see that there are at most $3$ sets (including $F_j$) meeting $F_j\setminus F_i$. Removing these sets, we obtain a $2$-almost $\lambda$-Fisher family of at least $m-3$ sets on the universe $[n]\setminus (F_j\setminus F_i)$, which has size at most $n-2$.  This family also has fewer nested pairs $F \subset F'$ with $\card{F' \setminus F} \ge 2$, and so by induction
\[m-3\le \frac{3}{2}(n-2)+3\lambda+\frac12 \sqrt{\lambda (n-2)}+ 90 \Rightarrow m\le \frac{3}{2}n+3\lambda + \frac12 \sqrt{\lambda n}+ 90.\]

Now for the base case we have that if $F_i\subset F_j$ for some $i \neq j$, then $|F_j\setminus F_i|=1$.  We require the following lemma, bounding the number of rank-$3$ $C_5$'s.

\begin{lemma} \label{lem:boundingC5s}
Let $\cF$ be a $2$-almost $\lambda$-Fisher family over $[n]$ as in Lemma~\ref{lemma:boundingthenumberoflowrankstructures}.  If in addition $\card{F' \setminus F} = 1$ for any $F \subset F'$ in $\cF$, then there are at most $2 \lambda + \sqrt{\lambda n} + 175$ rank-$3$ $C_5$'s in $\cF$.
\end{lemma}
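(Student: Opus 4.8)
The plan is to bound the number of rank-$3$ $C_5$'s by analysing the structure they impose on the sets involved, exactly as was done for the rank-$1$ $P_1$'s in Section~\ref{sec:kequalsone} and for the rank-$2$ $C_4$ in Lemma~\ref{lemma:boundingthenumberoflowrankstructures}. A rank-$3$ $C_5$ is a five-vertex cycle $\cF' = \{F_1, \hdots, F_5\}$ whose intersection matrix has rank exactly $3$, rather than the generic value $4$ guaranteed by Lemma~\ref{lem:compranks}(b). By Lemma~\ref{lemma:vurank}, $\rank(A(\cF')) \le \rank(M(\cF')) + 1 = 4$, so the five characteristic vectors $v_1, \hdots, v_5$ are linearly dependent; combined with Lemma~\ref{lem:(0,1)-vectors}(d), which tells us four of them are independent, this forces a single nontrivial linear relation among the five vectors. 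First I would extract this relation and use it, together with the cyclic pattern of $\lambda$ and non-$\lambda$ intersections dictated by $C_5$, to pin down a rigid geometric structure: I expect the sets in each rank-$3$ $C_5$ to share a common core and live on a common bounded universe, analogous to the sets $V \subset F_{i,j} \subset U \cup V$ appearing in Bound~III.

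\textbf{Deriving the structure and reducing to a counting problem.}
Once the linear relation on $v_1, \hdots, v_5$ is known, I would translate it back into set-theoretic identities (sums of characteristic vectors becoming unions/intersections, as in the $(i)$--$(iii)$ case analysis of Lemma~\ref{lemma:boundingthenumberoflowrankstructures}), ruling out the geometrically impossible cases using the fact that non-adjacent vertices of $C_5$ intersect in exactly $\lambda$ elements while adjacent ones do not. The hypothesis $\card{F' \setminus F} = 1$ for all nested pairs $F \subset F'$ should be what prevents the sets from being too large or too nested, keeping the relevant universe small (of size $O(\lambda)$), which is precisely why this lemma is stated only in the base case of the part~(iii) induction. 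Having fixed the local structure of a single rank-$3$ $C_5$, the task becomes counting how many disjoint such structures can coexist. I would form the union $\cG$ of suitable representatives—one or two sets per $C_5$, chosen so that all chosen pairs intersect in $\lambda$—and study their pairwise symmetric differences over a bounded ground set, exactly as in Bounds~II and~III.

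\textbf{Applying the distance bound.}
With $\cG$ in hand, I would compute or lower-bound the pairwise symmetric differences $\card{F \Delta F'}$ and then invoke the Plotkin-type bound, Lemma~\ref{lemma:distances}, to cap $\card{\cG}$, hence the number of rank-$3$ $C_5$'s. The error terms $2\lambda + \sqrt{\lambda n} + 175$ in the statement strongly suggest that the final estimate comes from choosing $\delta = \tfrac12$ (or close to it) in Lemma~\ref{lemma:distances} over a universe of size roughly $4\lambda$, with the $\sqrt{\lambda n}$ term arising from the same kind of dyadic ``additional terms'' bookkeeping carried out in Lemma~\ref{lem:addterms}, where contributions of pairs that fail to separate well are controlled level by level. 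The additive constant $175$ would absorb small components and boundary cases.

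\textbf{Main obstacle.}
I expect the principal difficulty to be the case analysis extracting the rigid structure from the single linear dependence among $v_1, \hdots, v_5$: unlike the $C_4$, a $C_5$ has an odd cycle, so the dependence relation need not split cleanly into a balanced ``$v_a + v_b = v_c + v_d$'' identity, and one must carefully use the alternation of $\lambda$ and non-$\lambda$ intersections around the odd cycle together with the sign information from the intersection matrix (as in the $\alpha, \beta$ argument of Lemma~\ref{lemma:boundingthenumberoflowrankstructures}). Controlling the sizes of the sets and the supporting universe—so that Lemma~\ref{lemma:distances} can be applied with a useful $\delta$—will require the nested-pair hypothesis to be deployed at just the right moment, and getting the explicit constants to match the claimed bound will be the most delicate bookkeeping.
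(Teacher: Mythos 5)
Your opening phase is exactly right and matches the paper: a rank-$3$ $C_5$ forces $\rank(A(\cF')) \le 4$ by Lemma \ref{lemma:vurank}, hence a single linear relation among the five characteristic vectors, and translating that relation into set-theoretic identities (the paper's Lemma \ref{lem:c5structure}) yields disjoint sets $X_0, \hdots, X_4$ with $F_1 = X_0 \cup X_1 \cup X_2$, $F_2 = X_0 \cup X_3 \cup X_4$, $F_3 = X_0 \cup X_2 \cup X_3$, $F_4 = X_0 \cup X_1 \cup X_4$. You also correctly anticipate that the odd-cycle case analysis is delicate. However, your counting phase has two genuine gaps.

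First, you miss the dichotomy on the core size, which is where both constants in the bound actually come from. The paper splits into Type I ($\card{X_0} = \lambda$) and Type II ($\card{X_0} < \lambda$). In Type I the nested-pair hypothesis bites: non-adjacency forces $X_1 = \emptyset$, so $F_1 \subset F_3$ and $\card{X_3} = 1$, and then $\card{F_5 \cap F_1} = \card{F_5 \cap F_4} = \lambda - 1$. Consequently your plan of choosing representatives ``so that all chosen pairs intersect in $\lambda$'' fails here: the natural representatives from \emph{different} Type I $C_5$'s intersect in $\lambda$ or $\lambda - 1$, so they form neither a $\lambda$-Fisher family nor a family to which Lemma \ref{lemma:distances} usefully applies. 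The paper instead solves the rank equation explicitly (only $(x_2,x_4,f) = (1,1,3)$ or $(2,2,2)$ occur), extracts a set $X$ with $\card{X} \le 9$, and proves that sets with equal trace on $X$ must intersect in more than $\lambda$ elements; a pigeonhole count then gives at most $\tfrac13 2^9 + 1 < 175$ Type I cycles. So $175$ is the output of a fingerprinting argument, not a slack term absorbing boundary cases.

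Second, your assumption that the relevant universe has size $O(\lambda)$ is false, and this is where your Plotkin-based plan breaks. In Type II the component $X_3^j$ is completely unconstrained (no nesting occurs there, since $X_1 \neq \emptyset$, so the hypothesis $\card{F' \setminus F} = 1$ gives nothing), and $F_3^j$ can have size close to $n$. The paper's Type II argument has two prongs: the representatives $\{F_1^j, F_4^j\}$ do form an exact $\lambda$-Fisher family on a common universe $U$ with $\card{U} \le 3\lambda + s$, $s = \min_j \card{X_3^j}$ — note this uses plain Fisher's inequality (Theorem \ref{thm:fisher}), not Lemma \ref{lemma:distances} — which suffices when $s$ is small; and when $s > \lambda + 2\sqrt{\lambda n}$, the sets $F_3^j$ are large with pairwise intersections exactly $\lambda$, and a Bonferroni (inclusion-exclusion) estimate bounds their number by roughly $2n/s$. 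The $\sqrt{\lambda n}$ term is the crossover point of these two bounds; it does not arise from dyadic bookkeeping in the style of Lemma \ref{lem:addterms}, and no analogue of that bookkeeping would control the unbounded sets $X_3^j$.
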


Given this lemma, we can then use \eqref{eqn:withC5} to obtain the desired bound.  From Lemma \ref{lemma:boundingthenumberoflowrankstructures}, we know the number of rank-$1$ $P_1$'s is bounded by $p \le 4 \min \left\{ \lambda, \frac{n-\lambda}{3} \right\} \le 4 \lambda$, while the number of rank-$2$ $C_4$'s is at most $q \le 1$.  Lemma \ref{lem:boundingC5s} bounds the number $r$ of rank-$3$ $C_5$'s.  Substituting these bounds into \eqref{eqn:withC5} gives the required result:
\[ m \le \frac32 n + \frac{p}{2} + q + \frac{r}{2} + \frac{3}{2} \le \frac32n + 3 \lambda + \frac12 \sqrt{\lambda n} + 90. \]

\begin{proof}[Proof of Lemma \ref{lem:boundingC5s}]
We seek to bound the number of rank-$3$ $C_5$'s.  The following lemma, proven in Appendix \ref{app:appendix}, shows they have a very particular structure.

\begin{lemma} \label{lem:c5structure}
Let $\cF' \subset \cF$ be a rank-$3$ $C_5$, where $\cF$ is as in Lemma~\ref{lem:boundingC5s}.  There exists a labelling of the sets $\cF' = \{ F_1, F_2, F_3, F_4, F_5 \}$ and disjoint sets $X_0, X_1, X_2, X_3, X_4$ such that
\[ F_1 = X_0 \cup X_1 \cup X_2, \; F_2 = X_0 \cup X_3 \cup X_4, \; F_3 = X_0 \cup X_2 \cup X_3, \textrm{and } F_4 = X_0 \cup X_1 \cup X_4. \]
\end{lemma}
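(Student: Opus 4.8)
The plan is to exploit the rigidity that $\rank(M(\cF')) = 3$ forces on the five characteristic vectors $v_1,\dots,v_5$ of the sets of $\cF'$: I will first show that these vectors satisfy an essentially unique linear dependence, then reduce that dependence to the balanced form $v_a+v_b=v_c+v_d$, and finally read off the atom decomposition directly. To set up the linear algebra, note that since the $C_5$ is a full connected component of the maximum-degree-$2$ graph $G(\cF)$, no $F_i$ is adjacent to a set outside $\cF'$; as $\card{\cF}\ge 6$, there is some $F\in\cF\setminus\cF'$, and its characteristic vector $v$ satisfies $v\cdot v_i=\card{F\cap F_i}=\lambda$ for all $i\in[5]$. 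Hence Lemma~\ref{lem:(0,1)-vectors} applies to $v_1,\dots,v_5$ (which are distinct and nonzero since the $F_i$ are distinct and $\card{F_i}>\lambda\ge 1$). Part~(d) produces four linearly independent vectors, so $\rank(A(\cF'))\ge 4$, while Lemma~\ref{lemma:vurank} gives $\rank(A(\cF'))\le \rank(M(\cF'))+1=4$. Thus $\rank(A(\cF'))=4$ and $\ker(A(\cF'))=\langle c\rangle$ is one-dimensional, yielding a relation $\sum_i c_iv_i=0$ unique up to scaling.

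Next I would pin down $c$. Writing $A=A(\cF')$, $M=M(\cF')=A^TA-\lambda J$ with $J=\mathbf 1\mathbf 1^T$, a dimension count shows $\sum_i c_i=0$: since $\dim\ker M=5-3=2$ and the hyperplane $\{x\in\RR^5:\mathbf 1^Tx=0\}$ has dimension $4$, their intersection has dimension at least $2+4-5=1$, so there is $0\ne x\in\ker M$ with $\mathbf 1^Tx=0$; then $0=x^TMx=|Ax|^2-\lambda(\mathbf 1^Tx)^2=|Ax|^2$, forcing $x\in\ker A=\langle c\rangle$ and hence $\mathbf 1^Tc=0$. Now I claim that some $c_i=0$. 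If so, the four vectors $\{v_j\}_{j\ne i}$ are linearly dependent, and Lemma~\ref{lem:(0,1)-vectors}(c) delivers $v_a+v_b=v_c+v_d$ after relabelling, which is precisely the relation I want.

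The main obstacle is ruling out the case where all $c_i\ne 0$, and here I would argue combinatorially. For each element $e$ the relation gives $\sum_{i:\,e\in F_i}c_i=0$, and combined with $\sum_ic_i=0$ this means: no element lies in exactly one or in exactly four of the five sets; an element lies in a pair $\{i,j\}$ only if $c_i=-c_j$, and in a triple only if its complementary pair is balanced. Since the atom of elements lying in all five sets contributes equally to every pairwise intersection, the ten values $\card{F_i\cap F_j}$ differ only through the $2$- and $3$-atoms, which are tightly constrained by these balance conditions. On the other hand, the five pairs with $\card{F_i\cap F_j}=\lambda$ (the non-edges of the $C_5$) form a $5$-cycle on $[5]$, which is odd and triangle-free. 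I expect to show that no admissible placement of $2$- and $3$-atoms can make the intersection sizes equal $\lambda$ on exactly this odd cycle: in the cleanest case the required correction is supported precisely on a $5$-cycle, which forces $c_i=-c_{i+1}$ all the way around and hence $c_i=0$, a contradiction; the remaining configurations reduce to a short finite check on the multiset $\{c_i\}$, where the hypothesis $\card{F'\setminus F}=1$ for nested pairs can be invoked to discard the degenerate possibilities.

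Finally, once $v_a+v_b=v_c+v_d$ is established the decomposition is immediate, since each element lies in as many of $\{F_a,F_b\}$ as of $\{F_c,F_d\}$, so the only possible membership patterns are ``all four'' and ``one from each pair.'' Setting $X_0=F_a\cap F_b\cap F_c\cap F_d$, $X_1=F_a\cap F_d\setminus(F_b\cup F_c)$, $X_2=F_a\cap F_c\setminus(F_b\cup F_d)$, $X_3=F_b\cap F_c\setminus(F_a\cup F_d)$ and $X_4=F_b\cap F_d\setminus(F_a\cup F_c)$ yields five pairwise disjoint sets with $F_a=X_0\cup X_1\cup X_2$, $F_b=X_0\cup X_3\cup X_4$, $F_c=X_0\cup X_2\cup X_3$ and $F_d=X_0\cup X_1\cup X_4$; relabelling $(a,b,c,d)$ and the leftover set as $(F_1,\dots,F_5)$ gives exactly the statement.
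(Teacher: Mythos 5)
Your setup is sound and in places cleaner than the paper's. The observation that Lemma \ref{lem:(0,1)-vectors}(d) forces $\rank(A(\cF'))=4$ exactly, so that $\ker(A(\cF'))$ is spanned by a single vector $c$, is correct, as is your proof that $\mathbf{1}^T c=0$ (though this follows more simply by dotting $\sum_i c_i v_i=0$ with the characteristic vector of an outside set $F$, since $\lambda\neq 0$). The resulting dichotomy — either some $c_i=0$, in which case Lemma \ref{lem:(0,1)-vectors}(c) yields $v_a+v_b=v_c+v_d$ and your final paragraph correctly extracts the atoms $X_0,\ldots,X_4$ — coincides with the paper's treatment of the "four dependent vectors" case, and that part of your argument is complete.

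The genuine gap is the case where all $c_i\neq 0$, which you never actually rule out: you write "I expect to show" and assert that "the remaining configurations reduce to a short finite check," but this is precisely the hard core of the lemma. The balance conditions you derive (every element lies in $0$, $2$, $3$ or $5$ of the sets; $2$-atoms only on pairs with $c_i=-c_j$; $3$-atoms only on complements of such pairs) are necessary conditions, but you nowhere show they are incompatible with the requirement that exactly the five non-edges of the $C_5$ have intersection size $\lambda$. Your odd-cycle sign-propagation argument handles only the configuration where the balanced pairs themselves form a $5$-cycle; coefficient vectors such as $c=(2,-1,-1,1,-1)$ produce quite different families of balanced pairs (here a star), and ruling out all admissible placements of $2$- and $3$-atoms in such cases is a substantial case analysis, not a short check. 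This is exactly where the paper invests most of its proof: it splits the full-support relation into the sign patterns \eqref{eqn:case1} and \eqref{eqn:case2}, kills the first by a coordinate argument, and for the second introduces the common core $X_0=F_1\cap F_2\cap F_3=F_4\cap F_5$, separately eliminates $\card{X_0}=\lambda$ and $\card{X_0}<\lambda$ using the cycle's adjacency structure, and in the latter case tracks elements through triple intersections such as $X_1\cap X_3\cap X_4$ to force coefficient identities ($\alpha_1+\alpha_3=\alpha_4$, $\alpha_2=\alpha_5$, and so on) that finally contradict $\card{F_2\cap F_3}\neq\lambda$. Nothing in your sketch replaces this analysis (and your suggestion that the nested-pair hypothesis $\card{F'\setminus F}=1$ will discard the degenerate cases is unsubstantiated — the paper's proof of this lemma does not need that hypothesis at all). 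Until the full-support case is closed, the lemma is not proved.
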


Observe that any four sets in a rank-$3$ $C_5$ induce a $P_3$, which has three intersections of size $\lambda$ and three of size not equal to $\lambda$.  Now apply Lemma \ref{lem:c5structure} to the $C_5$ to obtain the claimed structure.  Since the set $X_0$ is common to $F_1, F_2, F_3$ and $F_4$, it follows that each pairwise intersection between these four sets has size at least $\card{X_0}$, and thus we must have $\card{X_0} \le \lambda$.  We say a rank-$3$ $C_5$ is of Type I if $\card{X_0} = \lambda$, and of Type II if $\card{X_0} < \lambda$.  We shall show there are at most $175$ rank-$3$ $C_5$'s of Type I and at most $2 \lambda + \sqrt{\lambda n}$ of Type II, thus proving Lemma \ref{lem:boundingC5s}. \\

\noindent \underline{Type I:} $\card{X_0} = \lambda$.

We first handle the case where the common core $X_0$ has size $\lambda$, showing that there are at most $175$ rank-$3$ $C_5$'s of Type I.  Since $X_0 = F_1 \cap F_2 = F_3 \cap F_4$, it follows that these pairs are not adjacent in the cycle $G(\cF')$.  As four sets  in a $C_5$ induce three non-adjacent pairs, we may assume $F_1$ and $F_4$ are non-adjacent as well, so that the sets are $F_1, F_3, F_2, F_4$ and $F_5$ in cyclic order.

Since $F_1$ and $F_4$ are non-adjacent, we have $\card{F_1 \cap F_4} = \card{X_0} + \card{X_1} = \lambda$ as well.  Hence $X_1$ is empty, and thus $F_1 = X_0 \cup X_2$ and $F_4 = X_0 \cup X_4$.  This implies $F_1 \subset F_3 = X_0 \cup X_2 \cup X_3$ and, by our condition on $\cF$, we must have $\card{X_3} = \card{F_3 \setminus F_1} = 1$.  Finally, since $F_5$ is adjacent to $F_1$ but not $F_3$, we must have $\card{F_5 \cap F_3} = \lambda \neq \card{F_5 \cap F_1}$.  Given that $F_1 \subset F_3$ with $\card{F_3 \setminus F_1} = 1$, we must have $\card{F_5 \cap F_1} = \lambda - 1$.  Similarly, $F_4 \subset F_2$ and $\card{F_5 \cap F_2} = \lambda$, and so $\card{F_5 \cap F_4} = \lambda - 1$.

Hence, if we write $x_i = \card{X_i}$ and $\card{F_5} = \lambda + f$, the intersection matrix takes the form
\[ M(\cF') = M( \{ F_1, F_3, F_2, F_4, F_5 \} ) = \begin{pmatrix}
	x_2 & x_2 & 0 & 0 & -1 \\
	x_2 & x_2 + 1 & 1 & 0 & 0 \\
	0 & 1 & x_4 + 1 & x_4 & 0 \\
	0 & 0 & x_4 & x_4 & -1 \\
	-1 & 0 & 0 & -1 & f
\end{pmatrix}. \]
Since $\cF'$ is a rank-$3$ $C_5$, $M(\cF')$ has rank $3$.  As $\card{F_1}, \card{F_4} > \lambda$, we must have $x_2, x_4 > 0$.  The first three columns are then easily seen to be independent (consider the last three rows), and span the fourth column.  However, for $M(\cF')$ to have rank $3$, they must also span the fifth column, which is true if and only if $\frac{1}{x_2} + \frac{1}{x_4} + 1 = f$.  The only positive integer solutions to this equation are $(x_2, x_4, f) = (1,1,3)$ or $(x_2, x_4, f) = (2,2,2)$.

Now let $U = \cup_{i=0}^4 X_i$ be the support of the four sets $F_1, F_2, F_3$ and $F_4$.  Define $X_5 = F_5 \setminus U$ and $X_6 = X_0 \setminus F_5$, and let $X = X_2 \cup X_4 \cup X_5 \cup X_6$.  Observe that since $\card{F_5 \cap U} \ge \card{F_5 \cap F_3} = \lambda$, we have $\card{X_5} \le \card{F_5} - \lambda = f$.  Furthermore, since 
\[\lambda - 1 = \card{F_5 \cap F_1} = \card{F_5 \cap X_0} + \card{F_5 \cap X_2} \le \card{F_5 \cap X_0} + \card{X_2} = \card{F_5 \cap X_0} + x_2,\]
we have $\card{F_5 \cap X_0} \ge \lambda - x_2 - 1$, and hence $\card{X_6} = \card{X_0} - \card{F_5 \cap X_0} \le x_2 + 1$.  This gives $\card{X} \le 2x_2 + x_4 + f + 1$, which, given the two possibilities for $(x_2, x_4, f)$, can be bounded above by $\card{X} \le 9$.

We now claim that if $G_1$ and $G_2$ are two sets such that $\card{F_i \cap G_j} = \lambda$ for all $1 \le i \le 5$, $1 \le j \le 2$, and $G_1 \cap X = G_2 \cap X$, then $\card{G_1 \cap G_2} > \lambda$.  Observe that any set $F \in \cF \setminus \cF'$ not in the given rank-$3$ $C_5$ must intersect each of the five sets $F_i$ in exactly $\lambda$ elements.  Moreover, if there are $\ell$ other Type I rank-$3$ $C_5$'s, labelled as above, then the sets corresponding to $F_1, F_4$ and $F_5$ have pairwise intersections of size either $\lambda$ or $\lambda - 1$.  We thus obtain a family of $3 \ell$ sets whose pairwise intersections are of size at most $\lambda$, and hence by the claim they must have different intersections with the set $X$.  The number of rank-$3$ $C_5$'s is therefore bounded by $\ell + 1 \le \frac13 2^{\card{X}} + 1 \le \frac13 2^9 + 1 < 175$, as required.

Now we prove the claim.  First observe that since $\card{G_j \cap F_1} = \card{G_j \cap F_3} = \lambda$ and $X_3 = F_3 \setminus F_1$, we must have $G_j \cap X_3 = \emptyset$.  Now let $t = \card{G_1 \cap X_2}$.  Since $\lambda = \card{G_1 \cap F_1} = \card{G_1 \cap X_0} + \card{G_1 \cap X_2}$, we have $\card{G_1 \cap X_0} = \lambda - t$.  By considering $\card{G_1 \cap F_4}$, it follows that $\card{G_1 \cap X_4} = t$ as well, and hence $\card{G_1 \cap U} = \lambda + t$.  Since $G_2 \cap X_2 = G_1 \cap X_2$, we also have $\card{G_2 \cap U} = \lambda + t$.

As $G_1$ and $G_2$ share the $2t$ elements in $G_1 \cap (X_2 \cup X_4)$, we deduce
\[ \card{G_1 \cap G_2 \cap U} = \card{G_1 \cap G_2 \cap (X_0 \cup X_2 \cup X_4)} = \card{G_1 \cap (X_2 \cup X_4)} + \card{X_0} - \card{(X_0 \setminus G_1) \cup (X_0 \setminus G_2)} \ge \lambda, \]
with equality if and only if the two $t$-sets $X_0 \setminus G_1$ and $X_0 \setminus G_2$ are disjoint.  In other words, we cannot have $G_1$ and $G_2$ both missing an element in $X_0$.  We know $G_1$ and $G_2$ have the same intersection with $X_6 \subset X_0$, and hence we must have $X_6 \subset G_1 \cap G_2$.

To complete the argument, consider the intersection with $F_5$.  Let $s = \card{F_5 \cap X_2}$.  Since $\lambda - 1 = \card{F_5 \cap F_1} = \card{F_5 \cap F_4}$, it follows that $\card{F_5 \cap X_0} = \lambda - s -1$ and $\card{F_5 \cap X_4} = s$.  As $X_6 = X_0 \setminus F_5 \subset G_1$, we have $X_0 \subset F_5 \cup G_1$, and thus $\card{F_5 \cap G_1 \cap X_0} = \card{F_5 \cap X_0} + \card{G_1 \cap X_0} - \card{X_0} = \lambda - t - s - 1$.  We also have $\card{F_5 \cap G_1 \cap X_2} \le \card{F_5 \cap X_2} = s$, and $\card{F_5 \cap G_1 \cap X_4} \le \card{G_1 \cap X_4} = t$, giving
\[ \card{F_5 \cap G_1 \cap U} = \card{F_5 \cap G_1 \cap (X_0 \cup X_2 \cup X_4)} \le (\lambda - s - t - 1) + s + t = \lambda - 1. \]
Hence $F_5$ and $G_1$ must have at least one element outside $U$ in common, and thus $G_1 \cap X_5 \neq \emptyset$.  Since $G_1 \cap X = G_2 \cap X$, we have $\card{G_1 \cap G_2 \cap X_5} \ge 1$.  Thus $\card{G_1 \cap G_2} \ge \card{G_1 \cap G_2 \cap U} + \card{G_1 \cap G_2 \cap X_5} \ge \lambda + 1$, as desired. \\

\noindent \underline{Type II:}    $\card{X_0} < \lambda$.

We now turn our attention to rank-$3$ $C_5$'s of Type II, where the common core $X_0$ from Lemma \ref{lem:c5structure} has size strictly smaller than $\lambda$.  We shall show that there are at most $2 \lambda + \sqrt{\lambda n}$ such $C_5$'s.

Since $X_0 = F_1 \cap F_2 = F_3 \cap F_4$, it follows that $\card{F_1 \cap F_2}, \card{F_3 \cap F_4} \neq \lambda$, and hence these pairs are adjacent in the cycle $C_5$.  Without loss of generality we may assume that $F_{2}$ and $F_{3}$ are also adjacent so that $F_1,F_2,F_3,F_4,F_5$ form a rank-$3$ $C_5$ in this order. We thus have $|F_1\cap F_3|=|F_1\cap F_4|=|F_2\cap F_4|=\lambda$.  Since $F_1 \cap F_4 = X_0 \cup X_1$, it follows that $\card{X_1} = \lambda - \card{X_0}$.  Similarly, $\card{X_2} = \card{X_4} = \lambda - \card{X_0}$.  Thus $\card{F_1} = \card{X_0} + \card{X_1} + \card{X_2} = 2\lambda-|X_0|$,
and $|F_4|=\card{X_0} + \card{X_1} + \card{X_4} = 2\lambda-|X_0|$. Moreover, for any set $F\in \mathcal{F}\setminus \cF'$, $\card{F \cap F_1} = \card{F \cap F_2} = \lambda$, and so
\[|F|\ge |F\cap (F_1\cup F_2)|=|F\cap F_1|+|F\cap F_2|-|F\cap X_0|\ge 2\lambda-|X_0|,\]
with equality if and only if $X_0 \subset F\subset F_1 \cup F_2 = \cup_{j=0}^4 X_j$.

Suppose there are $l$ rank-$3$ $C_5$'s of Type II. For each $j \in [\ell]$, let $\{F_i^j\}_{i \in [5]}$ be the sets of the $C_5$ in cyclic order, and let $\{ X_i^j \}_{i=0}^4$ be the five corresponding sets given by Lemma \ref{lem:c5structure}.  Let $U^j = \cup_{i=0}^4 X_i^j$ be the support of the first four sets of the cycle.  By the above discussion, $|F^{j}_1|=|F^{j}_4|=2\lambda-|X^{j}_0|$ and $|F|\ge 2\lambda -|X^{j}_0|$ for every set in $\mathcal{F}\setminus \{F^{j}_i\}_{i\in [5]}$, with equality if and only if $X^{j}_0\subset F\subset U^j$. Note that for $i \in \{1,4\}$ and $j,j'\in [\ell]$ we have $2\lambda-|X^{j}_0|=|F^{j}_i|\ge 2\lambda-|X^{j'}_0|$, implying $|X^{j}_0|\le|X^{j'}_0|$, and so by symmetry $|X^{j}_0|=|X^{j'}_0|$. Moreover, it follows from this equality that $F^{j}_i\subset U^{j'}$ for $i\in \{1,4\}$. Thus all the sets in the subfamily $\mathcal{G}=\cup_{j\in [\ell]}\{F^{j}_1,F^{j}_4\}$ are supported on the universe $U =\cap_{j\in [\ell]}U^{j}$. Moreover, all the pairwise intersections of sets in $\mathcal{G}$ have size exactly $\lambda$ and so $\mathcal{G}$ is a $\lambda$-Fisher family. By Theorem \ref{thm:fisher}, we have $2\ell=|\mathcal{G}|\le |U|$. Note that for every $j\in [\ell]$,
\[ |U| \le |U^{j}| = \sum_{i=0}^4 |X^{j}_i| = \card{X^j_0} + 3 \left( \lambda - \card{X^j_0} \right) + \card{X^j_3} \le 3 \lambda + \card{X^j_3}. \]
Thus, setting $s=\min_{j\in [\ell]}\card{X^{j}_3}$, we have $\ell = \frac{1}{2} \card{ \cG } \le \frac{1}{2}\card{U} \le \frac{1}{2} (3 \lambda+s)$.  If $s\le \lambda+2 \sqrt{\lambda n}$, then we get the claimed bound on $\ell$. Hence we may assume $s>\lambda+2\sqrt{\lambda n}$.

For every $j\in [ \ell ]$ we then have $\card{F^{j}_3}=\card{X^j_0} + \card{X^j_2} + \card{X^j_3}=\lambda+\card{X^j_3} \ge \lambda+s$.  Moreover, for $j \neq j'$, we must have $\card{F^j_3 \cap F^{j'}_3} = \lambda$. Since all these sets $\{F_3^j\}_{j \in [\ell]}$ are large and have small pairwise intersections, there cannot be too many.  Indeed, by the Bonferroni inequalities, for any $\ell_0 \le \ell$,
\[
n\ge \card{\cup_{j=1}^{\ell_0}F^{j}_3} \ge \sum_{j=1}^{\ell_0} \card{F^{j}_3} - \sum_{1\le j<j'\le \ell_0} \card{F^{j}_3\cap F^{j'}_3}\ge \ell_0(\lambda+s)-\binom{\ell_0}{2}\lambda\ge \ell_0s-\frac{1}{2}\ell^2_0\lambda.
\]
Let $\ell_0 = \min \left\{ \ell, \floor{ \frac{s}{\lambda} } \right\}$, so that $n \ge \ell_0 s - \frac12 \ell_0^2 \lambda \ge \frac12 \ell_0 s$.  We cannot have $\ell_0 = \floor{ \frac{s}{\lambda}}$, for then, given $s > \lambda + 2 \sqrt{ \lambda n }$, we reach a contradiction:
\[n\ge \frac{1}{2} \ell_0 s \ge \frac{1}{2} \left(\frac{s}{\lambda}-1\right) s >\frac{1}{2} \left(2 \sqrt{\frac{n}{\lambda}} \right) \left( 2 \sqrt{\lambda n} \right) = 2n. \]
Hence we must have $\ell_0 = \ell$, which gives
\[ \ell = \ell_0 \le \frac{2n}{s}<\frac{2n}{\lambda+2\sqrt{\lambda n}}< \sqrt{\frac{n}{\lambda}}<2\lambda+\sqrt{\lambda n}.\]

Hence there are at most $2\lambda+\sqrt{\lambda n}$ rank-$3$ $C_5$'s of Type II, and thus at most $2\lambda + \sqrt{\lambda n} + 175$ rank-$3$ $C_5$'s in total, completing the proof of the lemma.
\end{proof}

\section{When $k$ is large} \label{sec:largek}

We will show how to use our results to improve the upper bound for $f(n,k,\lambda)$ for larger values of $k$.  Let $\cF$ be a $k$-almost $\lambda$-Fisher family over $[n]$.  Recall that Proposition \ref{prop:trivial} gives the upper bound $\card{\cF} \le (k+1)n+1$ by using lower bounds on the independence number of graphs with bounded degree to find a relatively large $\lambda$-Fisher subfamily $\cF' \subset \cF$, and then using Theorem \ref{thm:fisher} to bound the size of $\cF'$.  We shall instead use the following partitioning result of Lov\'asz \cite{l66}.

\begin{thm}[Lov\'asz \cite{l66}, 1966] \label{thm:partition}
Let $G$ be a graph of maximum degree $\Delta(G) = \Delta$.  Then, for any $t \ge 1$ and integers $\Delta_i$ such that $\sum_{i=1}^t (\Delta_i + 1) \ge \Delta + 1$, there is a partition of the vertices $V(G) = \cup_{i=1}^t V_i$ such that the maximum degrees of the induced subgraphs $G[V_i]$ satisfy $\Delta(G[V_i]) \le \Delta_i$.
\end{thm}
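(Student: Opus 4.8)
The plan is to prove this by a variational (extremal) argument, selecting the partition that minimises a suitably weighted count of internal edges. Concretely, among all partitions $V(G) = V_1 \cup \cdots \cup V_t$ into $t$ (possibly empty) parts, I would choose one minimising the potential
\[ \Phi = \sum_{i=1}^{t} \frac{e(G[V_i])}{\Delta_i + 1}, \]
where $e(G[V_i])$ is the number of edges of $G$ with both endpoints in $V_i$. Since there are only finitely many partitions into $t$ parts, a minimiser exists. The crucial point, which I expect to be the main obstacle, is using exactly this weighting by $1/(\Delta_i+1)$ rather than a plain edge count: it is what makes the local-exchange inequality close up when the $\Delta_i$ are unequal, and it forces the threshold $\sum_i(\Delta_i+1) \ge \Delta+1$ to appear at the end. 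Here I use that each $\Delta_i \ge 0$, so every denominator is positive.

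Next I would show that this optimal partition already satisfies the conclusion. Suppose not; then some part $V_i$ contains a vertex $v$ with more than $\Delta_i$ neighbours inside $V_i$. Writing $d_j(v)$ for the number of neighbours of $v$ lying in $V_j$, the assumption reads $d_i(v) \ge \Delta_i + 1$. For each index $j \ne i$, I would consider the partition obtained by moving $v$ from $V_i$ into $V_j$. This removes $d_i(v)$ edges from the $i$-th part and adds $d_j(v)$ edges to the $j$-th, leaving all other parts unchanged, so the potential changes by
\[ -\frac{d_i(v)}{\Delta_i + 1} + \frac{d_j(v)}{\Delta_j + 1}. \]
By minimality of $\Phi$ this change is non-negative, and since $d_i(v)/(\Delta_i+1) \ge 1$ we deduce $d_j(v)/(\Delta_j+1) \ge 1$, that is, $d_j(v) \ge \Delta_j + 1$ for every $j \ne i$. (Note that a move into an empty part needs no separate treatment, as it would give a strictly negative change and hence cannot occur at the minimum.)

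Finally, summing these inequalities together with the hypothesis $d_i(v) \ge \Delta_i + 1$ for the index $j = i$, I would reach a contradiction with the maximum degree bound:
\[ \Delta \ge d(v) = \sum_{j=1}^{t} d_j(v) \ge \sum_{j=1}^{t} (\Delta_j + 1) \ge \Delta + 1. \]
Hence no such vertex $v$ can exist, so the minimising partition satisfies $\Delta(G[V_i]) \le \Delta_i$ for every $i$, which is exactly the desired statement. The only real ingenuity lies in the choice of weighted potential in the first step; once that is in place, the exchange computation and the final summation are routine.
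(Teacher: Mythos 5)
Your proof is correct. There is nothing in the paper to compare it against: the paper quotes this statement as a known result of Lov\'asz \cite{l66} and uses it as a black box in Section \ref{sec:largek}, giving no proof of its own. Your argument --- minimising the weighted potential $\Phi = \sum_i e(G[V_i])/(\Delta_i+1)$ and then using a single-vertex exchange to derive $d_j(v) \ge \Delta_j + 1$ for every part $j$, which upon summation contradicts $d(v) \le \Delta$ --- is the classical proof of Lov\'asz's decomposition theorem, and every step checks out, including the identification of the correct weighting $1/(\Delta_i+1)$ as the key ingredient.
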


This theorem, coupled with our bounds on $f(n,2,\lambda)$, allows us to prove Corollary \ref{cor:largek}.

\setcounter{section}{1}
\setcounter{thm}{6}
\begin{cor}
For $k \ge 1$, we have $f(n,k,\lambda) \le (2n - 2) \ceil{\frac{k+1}{3}}$.  Moreover, if $\lambda = o(n)$, then $f(n,k,\lambda) \le \left(\frac32 + o(1)\right) n \ceil{\frac{k+1}{3}}$.
\end{cor}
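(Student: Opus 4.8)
The plan is to partition the auxiliary graph $G(\cF)$ into pieces of bounded degree using Lov\'asz's theorem (Theorem \ref{thm:partition}), so that each piece becomes a $2$-almost $\lambda$-Fisher family, and then apply Theorem \ref{thm:kequalstwo} to each piece separately and sum. Since $\ceil{\frac{k+1}{3}} = 1$ for $k \in \{1,2\}$, the corollary reduces to the known bounds in those cases, so the content is really in turning the $k=2$ result into a bound for all larger $k$.

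First I would set $t = \ceil{\frac{k+1}{3}}$ and record that, because $\cF$ is $k$-almost $\lambda$-Fisher, the graph $G = G(\cF)$ has maximum degree $\Delta \le k$. Taking the degree targets $\Delta_i = 2$ for each $i \in [t]$, we have $\sum_{i=1}^t (\Delta_i + 1) = 3t \ge k+1 \ge \Delta + 1$, so the hypotheses of Theorem \ref{thm:partition} are met. Applying it produces a partition $V(G) = \cup_{i=1}^t V_i$ for which every induced subgraph $G[V_i]$ has maximum degree at most $2$.

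Next I would identify each part $V_i$ with a subfamily $\cF_i \subset \cF$ and observe that $G[V_i]$ is precisely the auxiliary graph $G(\cF_i)$, since two sets $F, F' \in \cF_i$ are adjacent in $G[V_i]$ exactly when $\card{F \cap F'} \neq \lambda$. As $\Delta(G(\cF_i)) \le 2$, each $\cF_i$ is a $2$-almost $\lambda$-Fisher family over $[n]$, so Theorem \ref{thm:kequalstwo} applies to it (recalling that $n$ is assumed sufficiently large throughout). Part (i) gives $\card{\cF_i} \le 2n - 2$, and summing over the $t$ parts yields
\[ \card{\cF} = \sum_{i=1}^t \card{\cF_i} \le t(2n-2) = (2n-2)\ceil{\frac{k+1}{3}}. \]
When $\lambda = o(n)$, part (iii) instead gives $\card{\cF_i} \le \left(\frac32 + o(1)\right) n$ for each $i$, and the same summation gives $\card{\cF} \le \left(\frac32 + o(1)\right) n \ceil{\frac{k+1}{3}}$, completing the proof.

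I do not expect a serious obstacle here, as the argument is a clean reduction; the only points requiring care are the correct choice of $t$ and of the degree targets $\Delta_i$ so that Lov\'asz's inequality $\sum_{i=1}^t (\Delta_i + 1) \ge \Delta + 1$ holds with every $\Delta_i = 2$, together with the (immediate) observation that the subgraph induced on each part coincides with the auxiliary graph of the corresponding subfamily. One should also be mindful that Theorem \ref{thm:kequalstwo} requires $n$ large, so the stated bounds are to be understood for $n$ sufficiently large relative to $k$.
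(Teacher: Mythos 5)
Your proposal is correct and follows exactly the paper's own argument: partition the auxiliary graph via Lov\'asz's theorem (Theorem \ref{thm:partition}) into $\ceil{\frac{k+1}{3}}$ induced subgraphs of maximum degree at most $2$, view each part as a $2$-almost $\lambda$-Fisher subfamily, and apply parts (i) and (iii) of Theorem \ref{thm:kequalstwo} to each part before summing. The only difference is that you make explicit the verification $\sum_{i=1}^t (\Delta_i + 1) = 3t \ge k+1$, which the paper leaves implicit.
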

\setcounter{section}{5}
\setcounter{thm}{1}

\begin{proof}
Let $\cF$ be a $k$-almost $\lambda$-Fisher family over $[n]$.  The auxiliary graph $G = G(\cF)$ has maximum degree $k$, and hence by Theorem \ref{thm:partition}, the vertices of $G$ can be partitioned into $\ceil{\frac{k+1}{3}}$ induced subgraphs of maximum degree at most $2$.  This corresponds to partitioning $\cF$ into $\ceil{\frac{k+1}{3}}$ $2$-almost $\lambda$-Fisher subfamilies.  By part (i) of Theorem \ref{thm:kequalstwo}, each such family can have size at most $2n-2$, and hence we have $\card{\cF} \le (2n-2) \ceil{\frac{k+1}{3}}$.

Moreover, if $\lambda = o(n)$, then by part (iii) of Theorem \ref{thm:kequalstwo}, each of the subfamilies can have size at most $\left( \frac32 + o(1) \right) n$, giving rise to the improved bound $\card{\cF} \le \left( \frac32 + o(1) \right) n \ceil{\frac{k+1}{3}}$.

As $\cF$ was an arbitrary $k$-almost $\lambda$-Fisher family, the desired bounds on $f(n,k,\lambda)$ follow.
\end{proof}

\section{Concluding remarks} \label{sec:conc}

In this paper we bound the size of $k$-almost $\lambda$-Fisher families when $k$ is small, making progress on a problem introduced by Vu \cite{v99}.  Vu showed that the largest $1$-almost $\lambda$-Fisher families are given by the Hadamard construction, and we show that the same construction remains optimal for $k = 2$.  One might ask whether, as in the case $k = 1$, the Hadamard construction is the unique $2$-almost $\lambda$-Fisher family of size $n$.

Our proof shows that any $2$-almost $\lambda$-Fisher family of $2n-2$ sets must consist mostly of rank-$1$ $P_1$'s, with perhaps a few larger low-rank structures.  However, we believe that the presence of these larger structures would place too many restrictions upon the other sets in the family, and hence the bound of $2n-2$ can only be obtained by a family of rank-$1$ $P_1$'s; that is, by the Hadamard construction.

Let us now consider $3$-almost $\lambda$-Fisher families.  By Theorem \ref{thm:partition}, any such family may be partitioned into a $2$-almost $\lambda$-Fisher family and a $\lambda$-Fisher family.  Thus $f(n,3,\lambda) \le f(n,2,\lambda) + f(n,0,\lambda) \le 3n-2$.  However, we have not found a $3$-almost $\lambda$-Fisher family larger than the Hadamard construction, and so it may well be that the Hadamard construction is still optimal for $k=3$.  If this is the case, though, we know that it is not the unique optimal family.  To see this, let $n = 4m + t$, $t \le 4m-1$, and let $\lambda= m$.  Start by taking $\cF_0 = \{F_{i,j} : i \in [4m-1], j \in [2]\}$ to be the Hadamard construction on $[4m]$, where $\{ F_{i,1}, F_{i,2} \}_{i \in [4m-1]}$ are the disjoint pairs.  Now, for each of the $t$ elements in $[n] \setminus [4m]$, add the sets $\cF_1 = \{ F_{i,j} \cup \{4m + i\} : i \in [t], j \in [2] \}$.  It is then easy to verify that $\cF = \cF_0 \cup \cF_1$ is a $3$-almost $m$-Fisher family of size $2n-2$, thus matching the Hadamard construction.

The main open problem, though, is to determine the behaviour of $f(n,k,\lambda)$ for large $k$.  From the best known constructions (see Section 2 and  \cite{v99}), it is natural to conjecture that
\[ \lim_{k \rightarrow \infty} \lim_{n \rightarrow \infty} \max_{0 \le \lambda \le n} \frac{f(n,k,\lambda)}{kn} = \frac14. \]
We have shown that this holds for $\lambda = 0$, and give some evidence that this case should represent the typical behaviour of $f(n,k,\lambda)$.  It would be very interesting to resolve this problem for all $k$.

\paragraph{Acknowledgements} We would like to thank the anonymous referees for their careful reading of our manuscript and their helpful suggestions for improving the presentation of this paper.

\appendix

\section{$\{0,1\}$-vectors and low-rank structures} \label{app:appendix}

Here we prove the techincal lemmas needed in Section \ref{sec:kequalstwo}. 

\renewcommand{\thesection}{\arabic{section}}
\setcounter{section}{4}
\setcounter{thm}{0}
\begin{lemma}
Let $v_i\in \{0,1\}^n$, $i\in [5]$, be five distinct non-zero vectors. Suppose that there exist $\lambda\in \RR\setminus\{0\}$ and $v\in \RR^n$ such that $v\cdot v_i=\lambda$ for $i\in [5]$. Then:
\begin{enumerate}[(a)]
\item The vectors $\{ v_i \}_{i \in [3]}$ are linearly independent.
\item The vectors $v_1-v_2$ and $v_1-v_3$ are linearly independent.
\item If the vectors $\{v_i\}_{i\in [4]}$ are linearly dependent, then $v_1+v_2=v_3+v_4$ holds for some relabelling of these four vectors.
\item Four of the vectors $\{v_i\}_{i\in [5]}$ are linearly independent.
\end{enumerate}
\end{lemma}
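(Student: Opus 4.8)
The plan is to exploit the single linear constraint $v \cdot v_i = \lambda$ with $\lambda \neq 0$ to rule out small linear dependencies among the characteristic vectors. The key observation is that since $v \cdot v_i = \lambda$ for every $i$, any dependence $\sum_i c_i v_i = 0$ forces $\lambda \sum_i c_i = v \cdot \left( \sum_i c_i v_i \right) = 0$, and hence $\sum_i c_i = 0$. Thus the coefficients in any linear relation must sum to zero. This single fact drives all four parts.

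For part (a), suppose $\{v_i\}_{i \in [3]}$ were dependent. Since no $v_i$ is zero and the coefficients sum to zero, a two-term relation $c_1 v_1 + c_2 v_2 = 0$ with $c_1 + c_2 = 0$ forces $v_1 = v_2$, contradicting distinctness; a genuine three-term relation $c_1 v_1 + c_2 v_2 + c_3 v_3 = 0$ with $\sum c_i = 0$ then expresses one $v_i$ as an affine combination of the others, and I would check using the $\{0,1\}$-structure (comparing coordinates that are $1$ in one vector but not another) that this cannot happen for distinct $\{0,1\}$-vectors unless two coincide. Part (b) follows immediately from (a): if $\alpha(v_1 - v_2) + \beta(v_1 - v_3) = 0$, then $(\alpha+\beta)v_1 - \alpha v_2 - \beta v_3 = 0$ is a dependence among $v_1, v_2, v_3$, which by (a) must be trivial, giving $\alpha = \beta = 0$.

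For part (c), I assume $\{v_i\}_{i \in [4]}$ is dependent and write a nontrivial relation $\sum_{i=1}^4 c_i v_i = 0$ with $\sum c_i = 0$. By part (a) any three of the vectors are independent, so all four coefficients must be nonzero, and the relation is unique up to scaling. Since the $c_i$ sum to zero and are all nonzero, after relabelling I may assume $c_1, c_2 > 0$ and $c_3, c_4 < 0$, giving $c_1 v_1 + c_2 v_2 = |c_3| v_3 + |c_4| v_4$. The main work is to show $c_1 = c_2 = |c_3| = |c_4|$, i.e. that all coefficients have the same magnitude. I expect this to be the crux of the whole lemma: the idea is to read off coordinates. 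Since the left side equals the right side coordinatewise and all vectors are $\{0,1\}$-valued, each coordinate of $c_1 v_1 + c_2 v_2$ lies in $\{0, c_1, c_2, c_1 + c_2\}$ and must match a value in $\{0, |c_3|, |c_4|, |c_3|+|c_4|\}$; a careful case analysis on which vectors contain a given coordinate, combined with the constraint $c_1 + c_2 = |c_3| + |c_4|$, should force all four magnitudes equal, whence $v_1 + v_2 = v_3 + v_4$ after scaling the relation to have unit coefficients.

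For part (d), suppose no four of the five vectors are independent. By part (a) every triple is independent, so every four-element subset is dependent with a relation of the $v_1 + v_2 = v_3 + v_4$ form from part (c). I would derive a contradiction by combining two such relations on overlapping quadruples: if $\{v_1,v_2,v_3,v_4\}$ and $\{v_1,v_2,v_3,v_5\}$ each satisfy such a balanced equation, subtracting or comparing them should express $v_5$ in terms of three others in a way that violates part (a) or forces two of the five vectors to coincide. Keeping track of which sets can appear on which side of the balanced equations (using that the pairing is essentially determined, up to the ambiguity already noted in (c)) is the bookkeeping obstacle here, but the independence of triples from part (a) provides enough rigidity to close the argument.
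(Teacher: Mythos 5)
Your overall approach coincides with the paper's: dot every relation with $v$ to force the coefficients to sum to zero, then compare coordinates of $\{0,1\}$-vectors. Parts (a), (b) and (d) are sketched along exactly the lines the paper follows, and those sketches do close. But part (c) contains a genuine gap. You claim that because the coefficients $c_1,\dots,c_4$ of the dependence are nonzero and sum to zero, ``after relabelling I may assume $c_1,c_2>0$ and $c_3,c_4<0$.'' That inference is false: nonzero reals summing to zero can also split three-against-one in sign (e.g.\ $1,1,1,-3$), so you must also deal with a relation of the form $\alpha_1 v_1+\alpha_2 v_2+\alpha_3 v_3=v_4$ with all $\alpha_i>0$ and, after dotting with $v$, $\alpha_1+\alpha_2+\alpha_3=1$. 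Nothing in your write-up addresses this case, and it is not vacuous for abstract reasons; it is ruled out only by another round of the coordinate argument. The paper does precisely this: choosing a coordinate that is $1$ in $v_1$ and $0$ in $v_2$, the left-hand side there equals $\alpha_1$ or $\alpha_1+\alpha_3$, which lies strictly between $0$ and $1$, whereas the corresponding coordinate of the $\{0,1\}$-vector $v_4$ is $0$ or $1$ --- a contradiction. Without this case your proof of (c) is incomplete, and since your (d) relies on the exact conclusion of (c), the hole propagates.

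The rest of your outline matches the paper. In the genuine 2--2 case the paper completes your ``careful case analysis'' in two steps: normalizing the relation to $\alpha_1 v_1+\alpha_2 v_2=\alpha_3 v_3+v_4$, a coordinate where $v_1,v_2$ differ forces $\{\alpha_1,\alpha_2\}=\{\alpha_3,1\}$, and then a coordinate where $v_1,v_3$ differ forces $\alpha_1=\alpha_2=\alpha_3=1$. For (d), the paper combines the relation $v_1+v_2=v_3+v_4$ with the one on $\{v_1,v_2,v_3,v_5\}$: if the latter is $v_1+v_2=v_3+v_5$ then $v_4=v_5$, contradicting distinctness; otherwise, up to symmetry, $v_1+v_3=v_2+v_5$, and adding the two relations gives $2v_1=v_4+v_5$, contradicting part (a). So your plan for (d) is sound once (c) is repaired.
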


\begin{proof}
First we show (a). Assume for the sake of contradiction that $v_1$, $v_2$ and $v_3$ are linearly dependent. Since any two of the vectors are distinct non-zero $\{0,1\}$-vectors, and hence linearly independent, there must exist non-zero real numbers $\alpha_1$ and $\alpha_2$ such that $\alpha_1v_1+\alpha_2v_2=v_3$.  By taking the inner product of this relation with $v$, we conclude that $\alpha_1+\alpha_2=1$.

Since $v_1\neq v_2$, we may assume without loss of generality that there is one coordinate which is $1$ in $v_1$ and $0$ in $v_2$. The value of that coordinate in $\alpha_1v_1+\alpha_2v_2$ is thus $\alpha_1$ and, since the linear combination is equal to the $\{0,1\}$-vector $v_3$, we conclude that $\alpha_1=0$ or $\alpha_1=1$. If $\alpha_1=0$, then $\alpha_2=1$, and so $v_2=v_3$, contradicting the fact that $v_2\neq v_3$. Similarly, if $\alpha_1=1$, then $v_1=v_3$. This settles (a).

(b) follows easily from (a) since a non-trivial linear relation between the vectors $v_1-v_2$ and $v_1-v_3$ would contradict the independence of $v_1, v_2$ and $v_3$.

Next we prove (c). Suppose the vectors $\{v_i\}_{i\in [4]}$ are linearly dependent. By (a), any three of these four vectors are linearly independent, and so there must be a relation involving all four vectors. Writing this relation with positive coefficients, we see there must be positive $\alpha_1$, $\alpha_2$ and $\alpha_3$ such that, after relabelling, one of the following relations holds:
\[\text{(i) }\alpha_1v_{1}+\alpha_2v_{2}+\alpha_3v_{3}=v_{4}\;\;\text{ or }\;\;\text{(ii) }\alpha_1v_{1}+\alpha_2v_{2}=\alpha_3v_{3}+v_{4}.\]
We first show that case (i) is impossible. Note that by taking inner products with $v$, we may conclude that $\alpha_1+\alpha_2+\alpha_3=1$. Since $v_1 \neq v_2$, there is, without loss of generality, some coordinate which is $1$ in $v_1$ and $0$ in $v_2$.  In this coordinate, the expression on the left-hand side must then be equal to either $\alpha_1$ or $\alpha_1 + \alpha_3$; in either case, we have $0 < \alpha_1 < \alpha_1 + \alpha_3 < 1$.  Since $v_4$ is a $\{0,1\}$-vector, the right-hand side is either $0$ or $1$, giving a contradiction.

We now show that in case (ii), we must have $\alpha_1=\alpha_2=\alpha_3=1$. As before, taking the inner product with $v$ gives $\alpha_1+\alpha_2=\alpha_3+1$. Moreover, since $v_{1}\neq v_{2}$, we may assume without loss of generality that there is one coordinate which is $1$ in $v_{1}$ and $0$ in $v_{2}$. The value of this coordinate in $\alpha_1v_{1}+\alpha_2v_{2}$ is thus $\alpha_1$, and in $\alpha_3 v_{3} + v_{4}$ is one of $\{0,\alpha_3,1,\alpha_3 + 1\}$. Since the coefficients $\alpha_i$ are positive and $\alpha_1+\alpha_2=\alpha_3+1$ it follows that $\{\alpha_1,\alpha_2\}=\{\alpha_3,1\}$. Suppose $\alpha_1=\alpha_3$ and $\alpha_2=1$ (the other case follows analogously). Since $v_{1}\neq v_{3}$ there is a coordinate where they differ. Repeating the above argument then shows that either $\alpha_1=1$ or $\alpha_3=\alpha_2$, implying $\alpha_1=\alpha_2=\alpha_3=1$, thus proving (c).

Finally, we prove (d). Suppose for the sake of contradiction that all sets of  four of the vectors $v_i$ are linearly dependent. Then, by part (c) applied to the vectors $\{v_1,v_2,v_3,v_4\}$ we may assume, up to some permutation of the indices, that $v_1+v_2=v_3+v_4$.  The vectors $\{v_1, v_2, v_3, v_5\}$ must also contain a similar relation.  If it is $v_1 + v_2 = v_3 + v_5$, then we have $v_4 = v_1 + v_2 - v_3 = v_5$, contradicting the fact that these vectors are distinct.  Hence, by symmetry, we may assume we have $v_1 + v_3 = v_2 + v_5$.  Adding the two relations gives $2v_1=v_4+v_5$, contradicting the linear independence of the vectors $\{v_1,v_4,v_5\}$ guaranteed by (a).
\end{proof}

\begin{lemma}
The ranks of the components can be bounded as follows:
\begin{enumerate}[(a)]
\item If $\cF'$ is the $s$-vertex path $P_{s-1}$, $\rank(M(\cF')) \ge s-1$.
\item If $\cF'$ is the $s$-vertex cycle $C_s$, $\rank(M(\cF')) \ge s-2$.
If $\cF'$ is the triangle $C_3$, and there is some set $F$ whose intersections with every set in $\cF'$ all have size $\lambda$, then $\rank(M(\cF')) \ge 2$.\end{enumerate}
\end{lemma}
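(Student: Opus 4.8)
The plan is to exploit the sparsity that the path/cycle structure forces on the intersection matrix. For a component $\cF'$ whose auxiliary graph $G(\cF')$ is a path or cycle, the entry $M(\cF')_{ij} = |F_i \cap F_j| - \lambda$ is nonzero precisely when $i = j$, where $M_{ii} = |F_i| - \lambda > 0$ by assumption, or when $F_i, F_j$ are adjacent in $G(\cF')$, where $|F_i \cap F_j| \neq \lambda$; every non-adjacent pair has $|F_i \cap F_j| = \lambda$ and so contributes a zero. Thus, ordering the sets along the component, $M(\cF')$ is tridiagonal when $\cF'$ is a path, and tridiagonal with two extra nonzero corner entries $M_{1,s} = M_{s,1}$ when $\cF'$ is a cycle. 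For parts (a) and (b) I would simply locate an explicit maximal nonsingular off-diagonal submatrix; part (c) needs a different idea and is the main obstacle.

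For part (a), label the path $F_1, \ldots, F_s$ and consider the $(s-1) \times (s-1)$ submatrix $N$ on rows $\{2, \ldots, s\}$ and columns $\{1, \ldots, s-1\}$. Its $k$-th diagonal entry is the subdiagonal entry $M_{k+1,k} = |F_k \cap F_{k+1}| - \lambda \neq 0$, while every entry strictly below its diagonal is some $M_{k+1,l}$ with $(k+1) - l \ge 2$, hence zero by tridiagonality. So $N$ is triangular with nonzero diagonal, $\det N = \prod_k M_{k+1,k} \neq 0$, and therefore $\rank(M(\cF')) \ge s-1$.

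For part (b), label the cycle $F_1, \ldots, F_s$ in cyclic order, so that in addition $M_{1,s} = M_{s,1} = |F_1 \cap F_s| - \lambda \neq 0$. Now take the $(s-2) \times (s-2)$ submatrix on rows $\{3, \ldots, s\}$ and columns $\{2, \ldots, s-1\}$. The two corner entries cannot appear here, since they live in row or column $1$ or in the pair $\{1,s\}$, all of which are excluded. As in part (a), the diagonal entries of this submatrix are the subdiagonal entries $M_{k+2,k+1} \neq 0$, and every entry strictly below the diagonal has index gap at least $2$ (and is not a corner), hence vanishes. The submatrix is again triangular with nonzero diagonal, so it is nonsingular and $\rank(M(\cF')) \ge s-2$.

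Part (c) is where the submatrix method breaks down and constitutes the main obstacle: for the triangle, the argument of part (b) only yields $\rank \ge 1$, and the full $3 \times 3$ determinant may genuinely vanish (this is exactly why a triangle can have rank $2$ rather than $3$), so the external hypothesis must be used. Since $|F \cap F_i| = \lambda$ for each of the three sets $F_i$, the vector $v = v_F$ satisfies $v \cdot v_i = \lambda \neq 0$ for $i \in [3]$, and the three characteristic vectors are distinct and nonzero (as $|F_i| > \lambda > 0$). Applying Lemma \ref{lem:(0,1)-vectors}(a) — whose proof uses only these three vectors together with $v$ — shows that $v_1, v_2, v_3$ are linearly independent, so $\rank(A(\cF')) = 3$. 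Lemma \ref{lemma:vurank} then gives $\rank(M(\cF')) \ge \rank(A(\cF')) - 1 = 2$, as required.
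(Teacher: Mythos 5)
Your proposal is correct and matches the paper's proof essentially step for step: in parts (a) and (b) your triangular submatrices on rows $\{2,\ldots,s\}$, columns $\{1,\ldots,s-1\}$ (resp.\ rows $\{3,\ldots,s\}$, columns $\{2,\ldots,s-1\}$) are exactly the matrices the paper obtains by deleting the top row and right column (resp.\ the top two rows and the first and last columns). Part (c) is also identical, invoking Lemma \ref{lem:(0,1)-vectors}(a) to get $\rank(A(\cF'))=3$ and then Lemma \ref{lemma:vurank} to conclude $\rank(M(\cF'))\ge 2$.
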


\begin{proof}
We begin with $(a)$.  If we order the sets according to the path, the matrix $M = M(\cF')$ takes the tridiagonal form
\[ \begin{pmatrix}
	\ast & \ast & 0 & 0 & \hdots & 0 \\
	\ast & \ast & \ast & 0 & \hdots & 0 \\
	0 & \ast & \ast & \ast & \ddots & \vdots \\
	0 & 0 & \ast & \ast & \ddots & 0 \\
	\vdots & \vdots & \ddots & \ddots & \ddots & \ast \\
	0 & 0 & \hdots & 0 & \ast & \ast
	\end{pmatrix}, \]
where $\ast$ denotes a non-zero entry.  Deleting the top row and right column leaves a non-singular upper-diagonal $(s-1) \times (s-1)$ matrix, and thus $\rank (M) \ge s-1$, as claimed.

Next we prove $(b)$.  If we order the sets cyclically, the matrix $M = M(\cF')$ takes the almost-tridiagonal form
\[ \begin{pmatrix}
	\ast & \ast & 0 & \hdots & 0 & \ast \\
	\ast & \ast & \ast & 0 & \hdots & 0 \\
	0 & \ast & \ast & \ast & \ddots & \vdots \\
	\vdots & 0 & \ast & \ast & \ddots & 0 \\
	0 & \vdots & \ddots & \ddots & \ddots & \ast \\
	\ast & 0 & \hdots & 0 & \ast & \ast
	\end{pmatrix} . \]
Deleting the top two rows and the first and last columns we obtain a non-singular upper-diagonal $(s-2) \times (s-2)$ matrix, and thus $\rank (M) \ge s-2$.

Finally, we show $(c)$.  Note that the columns of $A = A(\cF')$ are the three characteristic vectors for the sets in $\cF'$.  Since $\card{F \cap F_i} = \lambda$ for each set $F_i \in \cF'$, the characteristic vectors satisfy $v \cdot v_i = \lambda$.  We may therefore apply Lemma \ref{lem:(0,1)-vectors} (a), implying these vectors are linearly independent, and so $\rank (A) = 3$.  Since $M = A^T A - \lambda J_3$, it follows from Lemma \ref{lemma:vurank} that $\rank (M) \ge 2$.
\end{proof}

\begin{lemma} \label{lem:lowrankspan}
Let $\cF$ be a $2$-almost $\lambda$-Fisher family of size $\card{\cF} \ge 5$ with $\card{F} > \lambda$ for all $F \in \cF$.  If a component $\cF' \subset \cF$ is either a rank-$1$ $P_1$ or a rank-$2$ $C_4$, then the columns of $M(\cF')$ do not span the all-$1$ vector $(1, \hdots, 1)^T$.
\end{lemma}

\begin{proof}
First suppose $\cF'$ is a rank-$1$ $P-1$.  Let $\cF' = \{F_1, F_2\}$.  $M = M(\cF')$ is then
\[ \begin{pmatrix}
\card{F_1} - \lambda & \card{F_1 \cap F_2} - \lambda \\
\card{F_1 \cap F_2} - \lambda & \card{F_2} - \lambda
\end{pmatrix} . \]
If $\rank (M) = 1$ and the columns span $(1,1)^T$, then we must have $\card{F_1} - \lambda = \card{F_1 \cap F_2} - \lambda = \card{F_2} - \lambda$, which implies $F_1 = F_2$, a contradiction.

Now let $\cF'$ be a rank-$2$ $C_4$.  Suppose, in cyclic order, we have $\cF' = \{F_1, F_2, F_3, F_4\}$.  Then $M = M(\cF')$ takes the form
\[ \begin{pmatrix}
	\card{F_1} - \lambda & \card{F_1 \cap F_2} - \lambda & 0 & \card{F_1 \cap F_4} - \lambda \\
	\card{F_1 \cap F_2} - \lambda & \card{F_2} - \lambda & \card{F_2 \cap F_3} - \lambda & 0 \\
	0 & \card{F_2 \cap F_3} - \lambda & \card{F_3} - \lambda & \card{F_3 \cap F_4} - \lambda \\
	\card{F_1 \cap F_4} - \lambda & 0 & \card{F_3 \cap F_4} - \lambda & \card{F_4} - \lambda
\end{pmatrix}. \]

Suppose $\rank (M) = 2$ and the columns span $(1,1,1,1)^T$.  Any two columns are clearly independent, and so it follows that any two should span $(1,1,1,1)^T$.  Hence it suffices to show this is not the case for the first two columns.  Suppose for contradiction we had $\alpha_1$ and $\alpha_2$ such that
\begin{equation} \label{eqn:twospan}
\alpha_1 \begin{pmatrix}]
	\card{F_1} - \lambda \\
	\card{F_1 \cap F_2} - \lambda \\
	0 \\
	\card{F_1 \cap F_4} - \lambda
\end{pmatrix}
+ \alpha_2 \begin{pmatrix}
	\card{F_1 \cap F_2} - \lambda \\
	\card{F_2} - \lambda \\
	\card{F_2 \cap F_3} - \lambda \\
	0
\end{pmatrix}
= \begin{pmatrix}
	1 \\
	1 \\
	1 \\
	1
\end{pmatrix}.
\end{equation}

Since $\card{\cF} \ge 5$, we may find a set $F\in \cF \setminus \cF'$ with $|F\cap F_i|=\lambda$ for every $i\in [4]$. By Lemma \ref{lem:(0,1)-vectors} (a), any three columns of $A = A(\cF')$ are linearly independent, and so $\rank (A) \ge 3$.  By Lemma \ref{lemma:vurank}, $\rank (M) \ge \rank (A) - 1$, and so we must have $\rank (A) = 3$.  Thus the four columns of $A$ are linearly dependent, and by Lemma \ref{lem:(0,1)-vectors} (c), it follows that we have the relation $v_{\pi(1)} + v_{\pi(2)} = v_{\pi(3)} + v_{\pi(4)}$ for some permutation $\pi\in S_4$.

Suppose first we had $v_1 + v_3 = v_2 + v_4$.  By considering the coordinates where this sum is equal to $2$, it follows that $F_1 \cap F_3 = F_2 \cap F_4$.  Since $\card{F_1 \cap F_3} = \lambda$, all pairwise intersections have size at least $\lambda$.  From the third and fourth coordinates of \eqref{eqn:twospan}, we have $\alpha_1 \left( \card{F_1 \cap F_4} - \lambda \right) = \alpha_2 \left( \card{F_2 \cap F_3} - \lambda \right) = 1$, and so we must have $\alpha_1, \alpha_2 > 0$.  However, the second coordinate then gives a contradiction:
\[ 1 = \alpha_1 \left( \card{F_1 \cap F_2} - \lambda \right) + \alpha_2 \left( \card{F_2} - \lambda \right) > \alpha_2 \left( \card{F_2} - \lambda \right) \ge \alpha_2 \left( \card{F_2 \cap F_3} - \lambda \right) = 1. \]

By symmetry, therefore, we may assume $v_1 + v_4 = v_2 + v_3$, and so $F_1 \cap F_4 = F_2 \cap F_3$, and thus $\card{F_1 \cap F_4} = \card{F_2 \cap F_3} \le \card{F_1 \cap F_2}$.  Thus, from the third and fourth coordinates of \eqref{eqn:twospan}, we have $\alpha_1 = \alpha_2 = \alpha$.  Equating the first and third coordinates, we have
\[ \alpha \left( \card{F_1} - \lambda \right) + \alpha \left( \card{F_1 \cap F_2} - \lambda \right) = 1 = \alpha \left( \card{F_2 \cap F_3} - \lambda \right), \]
and so $\card{F_1} - \lambda = \card{F_2 \cap F_3} - \card{F_1 \cap F_2} \le 0$. This implies $\card{F_1} \le \lambda$, giving the desired contradiction.
\end{proof}

\setcounter{thm}{5}
\begin{lemma}
Let $\cF' \subset \cF$ be a rank-$3$ $C_5$, where $\cF$ is as in Lemma~\ref{lem:boundingC5s}.  There exists a labelling of the sets $\cF' = \{ F_1, F_2, F_3, F_4, F_5 \}$ and disjoint sets $X_0, X_1, X_2, X_3, X_4$ such that
\[ F_1 = X_0 \cup X_1 \cup X_2, \; F_2 = X_0 \cup X_3 \cup X_4, \; F_3 = X_0 \cup X_2 \cup X_3, \textrm{and } F_4 = X_0 \cup X_1 \cup X_4. \]
\end{lemma}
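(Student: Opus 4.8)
The plan is to reduce the statement to producing four of the five sets whose characteristic vectors obey a relation of the form $v_1+v_2=v_3+v_4$, and then to read the sets $X_i$ directly off this relation. First I would use that $\cF'$ is an entire component of $G(\cF)$: since $\card{\cF}\ge 6$ there is a set $F\in\cF\setminus\cF'$, and as $F$ is non-adjacent to every $F_i$ we have $v_F\cdot v_i=\card{F\cap F_i}=\lambda$ for all $i\in[5]$. Writing $v=v_F$ and recalling $\lambda\ge 1$, this places us exactly in the hypotheses of Lemma~\ref{lem:(0,1)-vectors}. By part (d) some four of the vectors $v_i$ are linearly independent, so $\rank(A(\cF'))\ge 4$; since $\rank(M(\cF'))=3$, Lemma~\ref{lemma:vurank} gives $\rank(A(\cF'))\le \rank(M(\cF'))+1=4$. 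Hence $\rank(A(\cF'))=4$, and the five characteristic vectors admit a unique (up to scaling) linear dependence $\sum_{i=1}^{5}c_i v_i=0$. Taking the inner product with $v$ yields $\lambda\sum_i c_i=0$, so $\sum_i c_i=0$.

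The crux is to show this dependence has a zero coefficient, equivalently that some four of the five sets are already linearly dependent. Splitting the relation by the sign of $c_i$, I would write $\sum_{c_i>0}c_i v_i=\sum_{c_i<0}(-c_i)v_i=:u$, a non-negative integer vector, where $\sum_i c_i=0$ forces both sides to be non-empty. Assuming for contradiction that all $c_i\neq 0$, the smaller side has one or two terms (after possibly negating the relation). If it is a single term $c_a v_a=u$, comparing coordinates gives $\bigcup_{i\neq a}F_i\subseteq F_a$, so for any $F_i$ non-adjacent to $F_a$ in the cycle we get $\card{F_a\cap F_i}=\card{F_i}=\lambda$, contradicting $\card{F_i}>\lambda$.

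The remaining case, where the smaller side is $c_a v_a+c_b v_b$ and the larger side involves the other three vectors, is the main obstacle. Here a coordinatewise comparison shows $F_a\cap F_b$ equals the common intersection of the other three sets and pins down the possible values taken by the partial sums of the three negative coefficients; one then derives a contradiction by playing these value constraints against the $C_5$-adjacency pattern (which prescribes exactly which pairwise intersections equal $\lambda$), the size bound $\card{F_i}>\lambda$, and, if needed, the hypothesis $\card{F'\setminus F}=1$ for nested pairs. This is the rank-$3$ $C_5$ analogue of the rank-$2$ $C_4$ sign computation already carried out in the proof of Lemma~\ref{lemma:boundingthenumberoflowrankstructures}(b), and I expect essentially all the work to lie here. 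It follows that some $c_j=0$, so the four vectors $\{v_i\}_{i\neq j}$ are linearly dependent; note that these four sets automatically induce a $P_3$ in $G(\cF')$.

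Finally, I would feed these four dependent vectors into Lemma~\ref{lem:(0,1)-vectors}(c), which after relabelling them as $v_1,v_2,v_3,v_4$ (and calling the fifth set $F_5$) yields $v_1+v_2=v_3+v_4$. Reading this equality coordinatewise completes the proof: a coordinate contributes $2$ to each side exactly when it lies in both $F_1\cap F_2$ and $F_3\cap F_4$, so these intersections coincide and I set $X_0=F_1\cap F_2=F_3\cap F_4$; a coordinate contributing $1$ lies in exactly one of $\{F_1,F_2\}$ and exactly one of $\{F_3,F_4\}$, splitting into four disjoint classes, which I label $X_2$ (in $F_1$ and $F_3$), $X_1$ (in $F_1$ and $F_4$), $X_3$ (in $F_2$ and $F_3$), and $X_4$ (in $F_2$ and $F_4$). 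These five sets are pairwise disjoint by construction, and grouping each $F_i$ into its weight-$2$ and weight-$1$ coordinates gives $F_1=X_0\cup X_1\cup X_2$, $F_2=X_0\cup X_3\cup X_4$, $F_3=X_0\cup X_2\cup X_3$ and $F_4=X_0\cup X_1\cup X_4$, as required. The first and last steps are a short rank computation and routine bookkeeping, respectively, so the decisive difficulty is the middle step of ruling out the all-non-zero dependence.
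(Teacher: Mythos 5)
Your overall architecture coincides with the paper's proof: show the five characteristic vectors are linearly dependent, show the dependence can be taken to involve only four of them, invoke Lemma~\ref{lem:(0,1)-vectors}(c) to obtain $v_1+v_2=v_3+v_4$, and read off the sets $X_i$ coordinatewise. Your opening rank computation (outside set $F$, Lemma~\ref{lem:(0,1)-vectors}(d), Lemma~\ref{lemma:vurank}) is correct, the final bookkeeping is correct, and your treatment of the $4$--$1$ sign split is correct --- indeed slightly cleaner than the paper's, since $\bigcup_{i\neq a}F_i\subseteq F_a$ together with a non-neighbour of $F_a$ in the cycle immediately contradicts $\card{F_i}>\lambda$, with no normalization of coefficients needed.

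However, there is a genuine gap at exactly the point you flag: the $3$--$2$ split, i.e.\ a dependence $\alpha_1v_1+\alpha_2v_2+\alpha_3v_3=\alpha_4v_4+\alpha_5v_5$ with all five coefficients positive, is never actually ruled out; your text only asserts that ``one then derives a contradiction'' from the adjacency pattern and size bounds. This case is where essentially the entire difficulty of the lemma resides, and it is not a routine analogue of the rank-$2$ $C_4$ computation in Lemma~\ref{lemma:boundingthenumberoflowrankstructures}(b): there one already has a four-term relation of the form $v_1+v_2=v_3+v_4$, whereas here one must kill a genuinely five-term relation. The paper's argument for this case occupies most of the appendix proof: writing $X_0=F_1\cap F_2\cap F_3=F_4\cap F_5$ and $X_i=F_i\setminus X_0$, it first splits on whether $\card{X_0}=\lambda$ or $\card{X_0}<\lambda$ (each subcase forcing a different cyclic ordering of $\cF'$ via adjacency), then extracts coefficient identities such as $\alpha_1+\alpha_3=\alpha_4$ and $\alpha_2=\alpha_5$ from coordinates lying in triple intersections like $X_1\cap X_3\cap X_4$, and finally runs a chain of containments ($X_1\cap X_3\subseteq X_4$, $X_1\cap X_4\subseteq X_3$, $X_2\cap X_4\subseteq X_3$, $X_2\cap X_3\subseteq X_4$) culminating in $F_2\cap F_3=F_2\cap F_4$, which contradicts the adjacency of $F_2$ and $F_3$ in $G(\cF')$. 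None of this is implied by the constraints you list, and your sketch gives no indication of how the contradiction would actually be reached; until this case is carried out, the proposal is a correct plan rather than a proof. (A minor side remark: the nested-pair hypothesis $\card{F'\setminus F}=1$, which you mention as a possible tool, is in fact never needed in this lemma; it is used only later, in the Type~I analysis of Lemma~\ref{lem:boundingC5s}.)
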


\begin{proof}
We are assuming the intersection matrix $M = M(\cF')$ has rank $3$.  Since $M = A^T A - \lambda J_m$, where $A = A(\cF')$, by Lemma \ref{lemma:vurank} we have $\rank(M) \ge \rank(A) - 1$, and so $\rank(A) \le 4$.   The columns of $A$ are the characteristic vectors of the sets in $\cF'$, and hence we may deduce that these vectors are linearly dependent.

Suppose first that there are four of the sets whose vectors are linearly dependent.  By Lemma \ref{lem:(0,1)-vectors} (c), we can label the sets such that the characteristic vectors satisfy $v_1 + v_2 = v_3 + v_4$.  By considering the coordinates where the sum is positive, and is equal to $2$, we easily deduce that $F_1 \cup F_2 = F_3 \cup F_4$ and $F_1 \cap F_2 = F_3 \cap F_4$.  Let $X_0 = F_1 \cap F_2 = F_3 \cap F_4$, and let $F_i' = F_i \setminus X_0$ for $i \in [4]$.

Let $X_1 = F_1' \cap F_4'$, $X_2 = F_1' \cap F_3'$, $X_3 = F_2' \cap F_3'$ and $X_4 = F_2' \cap F_4'$.  Observe that since $F_1' \cap F_2' = \emptyset = F_3' \cap F_4'$, the sets $\{ X_i \}_{i = 0}^4$ are disjoint.  Since $F_1' \subset F_3' \cup F_4'$, we have $F_1' = F_1' \cap (F_3' \cup F_4') = X_1 \cup X_2$.  Adding back the common core $X_0$ gives $F_1 = X_0 \cup X_1 \cup X_2$, as required.  The remaining equalities follow similarly.  To prove the lemma, we shall show that we must always have a relation between the vectors of four of the sets; that is, there cannot be a minimal relation involving all five sets.

Suppose for contradiction we had such a relation.  Writing the relation with positive coefficients, it must either take the form, for some labelling of the sets,
\begin{equation} \label{eqn:case1}
\alpha_1 v_1 + \alpha_2 v_2 + \alpha_3 v_3 + 
\alpha_4 v_4 = v_5
\end{equation}
or
\begin{equation} \label{eqn:case2}
\alpha_1 v_1 + \alpha_2 v_2 + \alpha_3 v_3 = 
\alpha_4 v_4 + \alpha_5 v_5.
\end{equation}

First consider the relation in \eqref{eqn:case1}.  If $F \in \cF \setminus \cF'$ is another set in the family, then $\card{F \cap F_i} = \lambda$ for $i \in [5]$.  Taking dot products of \eqref{eqn:case1} with the vector $v_F$, we must have $\sum_{i \in [4]} \alpha_i \lambda = \lambda$, and so $\sum_{i \in [4]} \alpha_i = 1$.  Now, since $F_1 \neq F_2$, we may without loss of generality assume there is some element $j \in F_1 \setminus F_2$.  The $j$th coordinate on the left-hand side is between $\alpha_1$ and $1 - \alpha_2$, where $0 < \alpha_1 < 1 - \alpha_2 < 1$.  However, since $v_5$ is a $\{0,1\}$-vector, the same coordinate on the right-hand side is either $0$ or $1$, and hence the two cannot be equal, giving the desired contradiction.

Now consider \eqref{eqn:case2}.  By the same argument as before, we have $\alpha_1 + \alpha_2 + \alpha_3 = \alpha_4 + \alpha_5$.  The positivity of the $\alpha_i$'s implies $F_1 \cup F_2 \cup F_3 = F_4 \cup F_5$ and $F_1 \cap F_2 \cap F_3 = F_4 \cap F_5$.  Let $X_0$ denote this common core, and let $X_i = F_i \setminus X_0$ for $i \in [5]$.  Note that we have $\card{F_i \cap F_j} \ge \card{X_0}$ for $i,j \in [5]$.  Since $\cF'$ forms a five-cycle, we must have some pairwise intersections equal to $\lambda$, and hence we deduce $\card{X_0} \le \lambda$.

First we rule out the case $\card{X_0} = \lambda$.  In this case, we have $\card{F_4 \cap F_5} = \card{X_0} = \lambda$, and hence $F_4$ and $F_5$ cannot be adjacent in the five-cycle $G(\cF')$. Without loss of generality, we may assume the sets in cyclic order are $F_5, F_2, F_3, F_4$ and $F_1$.  Since $F_2$ and $F_3$ are adjacent, we must have $\card{F_2 \cap F_3} \neq \lambda$.  They already share the core $X_0$ of size $\lambda$, and thus we must have $X_2 \cap X_3 \neq \emptyset$.  As $F_1 \cup F_2 \cup F_3 = F_4 \cup F_5$, it follows that $X_2 \cap X_3 \subset X_4 \cup X_5$.  However, if $X_2 \cap X_3 \cap X_4 \neq \emptyset$, then we have $\card{F_2 \cap F_4} = \card{X_0} + \card{X_2 \cap X_4} > \card{X_0} = \lambda$, contradicting the fact that $F_2$ and $F_4$ are not adjacent in $G(\cF')$.  Similarly, if $X_2 \cap X_3 \cap X_5 \neq \emptyset$, we have $\card{F_3 \cap F_5} > \lambda$.  Hence we cannot have $\card{X_0} = \lambda$.

Finally, suppose $\card{X_0} < \lambda$.  Hence $\card{F_4 \cap F_5} < \lambda$, and so $F_4$ and $F_5$ must be adjacent in $G(\cF')$.  We may therefore assume the five-cycle consists of $F_1, F_2, F_3, F_4$ and $F_5$ in cyclic order.  Since $F_1$ and $F_3$ are not adjacent, we must have $\lambda = \card{F_1 \cap F_3} = \card{X_0} + \card{X_1 \cap X_3}$.  Since $\card{X_0} < \lambda$, it follows that $X_1 \cap X_3 \neq \emptyset$.  Note that since $X_0 = F_1 \cap F_2 \cap F_3$, we must have $X_1 \cap X_3$ disjoint from $X_2$.  Moreover, we have $X_1 \cap X_3 \subset X_4 \cup X_5$, and without loss of generality we may assume $X_1 \cap X_3 \cap X_4 \neq \emptyset$.

Consider any element $j \in X_1 \cap X_3 \cap X_4$.  By considering the $j$th coordinate in \eqref{eqn:case2}, we have $\alpha_1 + \alpha_3 = \alpha_4$.  Since $\alpha_1 + \alpha_2 + \alpha_3 = \alpha_4 + \alpha_5$, it follows that $\alpha_2 = \alpha_5$.  If we also have $X_1 \cap X_3 \cap X_5 \neq \emptyset$, then we would similarly have $\alpha_2 = \alpha_4$, and so there is some $\alpha = \alpha_1 + \alpha_3 = \alpha_2 = \alpha_4 = \alpha_5$.  Now $F_1 \neq F_3$, and so without loss of generality we may assume there is some $j' \in F_1 \setminus F_3$.  The corresponding coordinate in the left-hand side of \eqref{eqn:case2} must be equal to either $\alpha_1$ or $\alpha_1 + \alpha_2 = \alpha_1 + \alpha$.  On the right-hand side, the possible values are $0, \alpha$ or $2 \alpha$.  Since $0 < \alpha_1 < \alpha_1 + \alpha_3 = \alpha$, we cannot have equality, giving rise to a contradiction.

Hence we must have $X_1 \cap X_3 \subset X_4$.  Note that $F_1$ and $F_4$ are not adjacent, and hence we must have $\card{F_1 \cap F_4} = \lambda$.  We already have $\card{F_1 \cap F_3 \cap F_4} = \card{X_0} + \card{X_1 \cap X_3 \cap X_4} = \card{X_0} + \card{X_1 \cap X_3} = \card{F_1 \cap F_3} = \lambda$, and thus we must have $X_1 \cap X_4 \subset X_3$.  Now $F_2$ and $F_4$ are also not adjacent, and so we must have $\card{F_2 \cap F_4} = \lambda$, and thus $\card{X_2 \cap X_4} = \lambda - \card{X_0}$.  Consider any element $j \in X_2 \cap X_4$.  On the right-hand side of \eqref{eqn:case2}, the $j$th coordinate is equal to $\alpha_4$.  On the left-hand side it is equal to at least $\alpha_2 = \alpha_5$.  Note that these cannot be equal, as we have already seen $\alpha_2 = \alpha_4 = \alpha_5$ leads to a contradiction.  Hence we must also have $j \in X_1 \cup X_3$.  We cannot have $j \in X_1$, as $X_1 \cap X_4 \subset X_3$, and $X_1 \cap X_2 \cap X_3 = \emptyset$.  Thus $j \in X_3$, and hence we have shown $X_2 \cap X_4 \subset X_3$.

Now consider any element $j \in X_2 \cap X_5$, which must be non-empty as $\card{F_2 \cap F_5} = \lambda$.  On the right-hand side of \eqref{eqn:case2}, the $j$th coordinate has value $\alpha_5$, while on the left-hand side it has value at least $\alpha_2 = \alpha_5$.  Hence it equals $\alpha_2$, and we cannot have $j \in X_1 \cup X_3$.  Thus it follows that $X_2 \cap X_3 \cap X_5 = \emptyset$, and hence $X_2 \cap X_3 \subset X_4$.  In light of our previous observation, however, this implies $X_2 \cap X_3 = X_2 \cap X_4$, and so $F_2 \cap F_3 = F_2 \cap F_4$.  Thus $\lambda = \card{F_2 \cap F_4} = \card{F_2 \cap F_3}$, which contradicts the fact that $F_2$ and $F_3$ are adjacent in $G(\cF')$.  This completes the proof.
\end{proof}
\end{document}